	\DeclareMathOperator{\id}{id}
	\DeclareMathOperator{\ad}{ad}
	\DeclareMathOperator{\der}{der}
	\DeclareMathOperator{\im}{im}
	\DeclareMathOperator{\per}{per}
	\DeclareMathOperator{\GL}{GL}
	\DeclareMathOperator{\Alt}{Alt}
	\DeclareMathOperator{\Aut}{Aut}
	\DeclareMathOperator{\Ad}{Ad}
	\DeclareMathOperator{\pr}{pr}
	\DeclareMathOperator{\supp}{supp}
	\DeclareMathOperator{\ev}{ev}
	\newcommand{\N}{\mathbb{N}}
	\newcommand{\R}{\mathbb{R}}
	\newcommand{\ep}{\varepsilon}
	\renewcommand{\a}{\alpha}	
	\renewcommand{\b}{\beta}	
	\newcommand{\g}{\gamma}
	\renewcommand{\gg}{\Gamma}	
	\renewcommand{\d}{\delta}	
	\newcommand{\gd}{\Delta}
	\newcommand{\e}{\eta}
	\renewcommand{\t}{\theta}	
	\renewcommand{\k}{\kappa}	
	\renewcommand{\l}{\lambda}	
	\newcommand{\gl}{\Lambda}
	\newcommand{\p}{\pi}
	\newcommand{\gp}{\Pi}
	\renewcommand{\r}{\rho}		
	\newcommand{\s}{\sigma}
	\newcommand{\ta}{\tau}
	\newcommand{\ph}{\varphi}
	\newcommand{\gph}{\Phi}
	\newcommand{\ps}{\psi}
	\renewcommand{\o}{\omega}		
	\newcommand{\go}{\Omega}
	\newcommand{\fg}{\frak{g}}
	\newcommand{\fh}{\frak{h}}
	\newcommand{\fz}{\frak{z}}
	\renewcommand{\le}{\left}
	\newcommand{\ri}{\right}
	\newcommand{\set}[1]{\le\{#1\ri\}}
	\newcommand{\ra}{\rightarrow}
	\newcommand{\hra}{\hookrightarrow}
	\newcommand{\xra}[1]{\xrightarrow{#1}}
	\newcommand{\ub}[2]{\underbracket{#1}_{#2}}
	\newcommand{\subs}{\subseteq}
	\newcommand{\ol}[1]{\overline{#1}}
	\newcommand{\ti}{\times}
	\newcommand{\til}{\tilde}
	\newcommand{\wtil}{\widetilde}
	\newcommand{\we}{\wedge}
	\newcommand{\hor}{\text{hor}}
	\newcommand{\fix}{\text{fix}}
	\newcommand{\fr}[2]{\frac{#1}{#2}}
	\newcommand{\ms}{\mapsto}
	\newcommand{\ci}{\circ}
	\newcommand{\co}{\colon}
	\renewcommand{\-}{\item}
	\newcommand{\inv}{^{-1}}
	\newcommand{\8}{\infty}
	\renewcommand{\.}{\cdot}
	\renewcommand{\:}{\dots}
	\newcommand{\tx}[1]{\text{ #1 }}
	\newcommand{\bl}{{\scriptscriptstyle \bullet}}
\theoremstyle{plain}
\newtheorem{satz}{Satz}[section]		
\newtheorem{theorem}[satz]{Theorem}
\newtheorem{remark}[satz]{Remark}
\newtheorem{lemma}[satz]{Lemma}
\newtheorem{satz/definition}[satz]{Satz/Definition}
\newtheorem{theorem/definition}[satz]{Theorem/Definition}
\newtheorem{definition/theorem}[satz]{Definition/Theorem}
\newtheorem{definition/lemma}[satz]{Definition/Lemma}
\newtheorem{corollary}[satz]{Corollary}
\newtheorem{lemma/definition}[satz]{Lemma/Definition}
\theoremstyle{definition}
\newtheorem{definition}[satz]{Definition}
\newtheorem{convention}[satz]{Convention}
\title{Universal central extensions for groups of sections on non-compact manifolds}
\setdefaultitem{\textbullet}{-}{}{}		
\author{Jan Milan Eyni}
\date{}
\begin{document}
\maketitle

\begin{abstract}
We construct a central Lie group extension for the Lie group of compactly supported sections of a Lie group bundle over a sigma-compact base manifold. This generalises a result of the paper ``Central extensions of groups of sections'' by Neeb and Wockel, where the base manifold is assumed to be compact. In the second part of the paper, we show that this extension is  universal and obtain a generalisation of a corresponding result in the paper ''Universal central extensions of gauge algebras and groups'' by Janssens and Wockel, where again (in the case of Lie group extensions) the base manifold is assumed compact.
\end{abstract}
{\footnotesize
{\bf Jan Milan Eyni},
Universit\"{a}t Paderborn,
Institut f\"{u}r Mathematik,
Warburger Str.\ 100,
33098 Paderborn, Germany;
\,{\tt janme@math.upb.de}\\[2mm]

\section*{Introduction and notation}
Central extensions play an important role in the theory of infinite-dimensional Lie groups. For example, every Banach-Lie algebra $\fg$ is a central extension $\fz(\fg) \hra \fg \ra \ad(\fg)$, where the centre $\fz(\fg)$ and $\ad(\fg)$ are integrable to a Banach-Lie group; integrability of $\fg$ corresponds to the existence of a corresponding central Lie group extensions (see \cite{van-Est:1964}).

Inspired by the seminal work by van Est and Korthagen, Neeb elaborated the general theory of central extensions of Lie groups that are modelled over locally convex spaces in 2002 (see \cite{Neeb:2002}). In particular, Neeb showed that certain central extensions of Lie algebras can be integrated to central extensions of Lie groups: If the central extension of a locally convex Lie algebra $V\hra \hat{\fg}\ra \fg$ (with a sequentially complete locally convex space $V$) is represented by a continuous Lie algebra cocycle $\o\co \fg^2 \ra V$ and $G$ is a Lie group with Lie algebra $\fg$, one considers the so-called period homomorphism
\begin{align*}
\per_\o \co \p_2(G) \ra V,~ [\s]\ms \int_{\s} \o^l
\end{align*}
where $\o^l \in \go^2(G,V)$ is the canonical left invariant $2$-form on $G$ with $\o^l_1(v,w)=\o(v,w)$ and $\s$ is a smooth representative of the homotopy class $[\s]$. One writes $\gp_\o$ for the image of the period homomorphism and calls it the period group of $\o$. The important result from \cite{Neeb:2002} is that if $\gp_\o$ is a discrete subgroup of $V$ and the adjoined action of $\fg$ on $\hat{\fg}$ integrates to a smooth action of $G$ on $\hat{\fg}$, then $V\hra \hat{\fg}\ra \fg$ integrates to a central  extension  of Lie groups (see \cite[Proposition 7.6 and Theorem 7.12]{Neeb:2002}).

In the following, a Lie group is always assumed to be modelled over a Hausdorff locally convex space.

Given two central Lie group extensions $Z_1\hra \hat{G}_1 \xra{q_1} G$ and $Z_2\hra \hat{G}_2 \xra{q_2} G$, we call a Lie group homomorphism $\ph \co \hat{G}_1 \ra \hat{G}_2$ a morphism of Lie group extensions if $q_1 = q_2 \ci \ph$. In an analogous way one defines a morphism of Lie algebra extensions. In this way one obtains categories of Lie group extensions and Lie algebra extensions, respectively, and an object in these categories is called universal if it is the initial one. In 2002 Neeb showed that under certain conditions a central extension of a Lie group 
is universal in the category of Lie group extensions if its corresponding Lie algebra extension is universal in the category of Lie algebra extensions (see \cite[Recognition Theorem (Theorem 4.13)]{Neeb:2002a}).

The natural next step was to apply the general theory to different types of Lie groups that are modelled over locally convex spaces. Important infinite-dimensional Lie groups are current groups. These are groups of the form $C^\8(M,G)$ where $M$ is a compact finite-dimensional manifold and $G$ is a Lie group. 
In 2003 Maier and Neeb constructed a universal central extensions for current groups (see \cite{Maier:2003}) by reducing the problem to the case of loop groups $C^\8(\mathbb{S}^1,G)$.

The compactness of $M$ is a strong condition but it is not possible to equip $C^\8(M,G)$ with a reasonable Lie group structure if $M$ is non-compact. Although one has a natural Lie group structure on the group $C^\8_c(M,G)$ of compactly supported smooth functions from a $\sigma$-compact manifold $M$ to a Lie group $G$. In this situation, $C^\8_c(M,G)$ is the inductive limit of the Lie groups $C_K^\8(M,G):= \set{f\in C^\8(M,G): \supp(f)\subs K}$ where $K$ runs through a compact exhaustion of $M$. The Lie algebra of $C^\8_c(M,G)$ is given by $C^\8_c(M,\fg)$. In this context, $C^\8_c(M,\fg)$ is equipped with the canonical direct limit topology in the category of locally convex spaces.  
In 2004, Neeb constructed a universal central extension for $C^\8_c(M,G)$ in important cases (see \cite{Neeb:2004}).

It is possible to turn the group $\gg(M,\mathcal{G})$ of sections of a Lie group bundle $\mathcal{G}$ over a compact base manifold $M$ into a Lie group by using the construction of 
the Lie group structure of the gauge group from \cite{Wockel:2007} (see \cite[Appendix A]{Neeb:2009}). The Lie algebra of $\gg(M,\mathcal{G})$ is the Lie algebra $\gg(M,\frak{G})$ of sections of the Lie algebra bundle $\frak{G}$ that corresponds to $\mathcal{G}$. Hence the question arises if it is possible to construct central extensions for these groups of sections. This is indeed the case and was done in 2009 by Neeb and Wockel in \cite{Neeb:2009}.

As mentioned above, one way to show the universality of a Lie group extension is to show the universality of the corresponding Lie algebra extension and then use the Recognition Theorem from \cite{Neeb:2002a}. In the resent paper \cite{Janssens:2013} from 2013, Janssens and Wockel constructed a universal central extension of the Lie algebra $\gg_c(M,\frak{G})$ of compactly supported sections in a Lie algebra bundle over a $\sigma$-compact manifold. They also applied this result to the central extension constructed in \cite{Neeb:2009}: By assuming the base manifold $M$ to be compact they obtained a universal Lie algebra extension that corresponds to the Lie group extension described in \cite{Neeb:2009}; they were able to show the universality of this Lie group extension.

In 2013, Sch{\"u}tt generalised the construction of the Lie group structure from \cite{Wockel:2007} by endowing the gauge group of a principal bundle over a not necessary compact base manifold $M$ with a Lie group structure, under mild hypotheses (see \cite{Schuett:2013}). It is clear that we can use an analogous construction to endow the group of compactly supported sections of a Lie group bundle over a $\sigma$-compact manifold with a Lie group structure. Similarly, Neeb and Wockel already generalised the construction of the Lie group structure on a gauge group with compact base manifold from \cite{Wockel:2007} to the case of section groups over compact base manifolds.

The principal aim of this paper is to construct a central extension of the Lie group of compactly supported smooth sections on a $\sigma$-compact manifold such that its corresponding Lie algebra extension is represented by the Lie algebra cocycle described in \cite{Janssens:2013}.  
This generalises the corresponding result from \cite{Neeb:2009} to the case where the base manifold is non-compact.  
The proof, which combines arguments from \cite{Neeb:2004} and \cite{Neeb:2009} with new ideas, is discussed in Section \ref{GruChLieEx} and Section \ref{Gru1234}. The main result is Theorem \ref{GruMainIII} where we show that the canonical cocycle 
\begin{align*}
\o \co \gg_c(M,\frak{G})^2 \ra \go^1_c(M,\mathbb{V})/d\gg_c(M,\frak{G}),~ (\g,\e)\ms [\k(\g,\e)]
\end{align*}
can be integrated to a cocycle of Lie groups.  This result generalises \cite[Theorem 4.24]{Neeb:2009} to the case of a non-compact base manifold.
 The first step is to show that the period group of  $\o$ is a discrete  subgroup of $\go^1_c(M,\mathbb{V})/d\gg_c(M,\frak{G})$. This will be discussed in Theorem \ref{GruMainI} and is the complementary result to \cite[Theorem 4.14]{Neeb:2009}. Then  we will integrate the adjoint action of $\gg_c(M,\frak{G})$ on $\gg_c(M,\frak{G}) \ti_{\o} \ol{\Omega}^1(M,\mathbb{V})$ to a smooth action of $\gg_c(M,\mathcal{G})$ on $\gg_c(M,\frak{G}) \ti_{\o_M} \ol{\Omega}^1(M,\mathbb{V})$ this is the complementary result to the statements in \cite[Section 4.2 (Part about general Lie algebra bundles]{Neeb:2009}. Considering a compact manifold $M$ in our consideration in  Section \ref{Gru1234}  yields an alternative argumentation for the result in \cite[Section 4.2 (Part about general Lie algebra bundles]{Neeb:2009}\footnote{Our arguments about the discreteness of the image of the period map (Section \ref{GruChLieEx}) dose not  yield an alternative argumentation in the compact case.}. Especially we do not have to assume the typical fiber $G$ of the Lie group bundle $\mathcal{G}$ to be $1$-connected.\footnote{An earlier version of this paper, contained a more complicated argumentation that required the group $G$ to be semisimple.}
In the second part of the paper (Section \ref{GruZweiterTeil}) we turn to the question of universality. Once constructed, the central extension it is not hard to show its universality because mainly we can use the arguments from the compact case (\cite{Janssens:2013}).

In the following we fix our notation:
\begin{compactenum}
\item If $H \hra P \xra{q} M$ is a principal bundle with  right action $R \co P \ti H \ra P$ we write $VP:=\ker(Tq)$ for the vertical bundle of $TP$ and $V_pP:=T_pP\cap VP$ for the vertical space in $p \in P$. Analogously if $HP \subs TP$ is a principal connection ($HP\oplus VP = TP$ and $TR_hH_pP = H_{ph}P$), we write $H_pP:=T_pP \cap HP$ for the horizontal space in $p \in P$. 
\item \label{Gruaaaa}Let $H \hra P \xra{q} M$ be a finite-dimensional principal bundle over a connected $\s$-compact manifold $M$ with  right action $R \co P \ti H \ra P$ and a principal connection $HP \subs TP$. Given a finite-dimensional linear representation $\r \co H \ra \GL(V)$ and $k\in \mathbb{N}_0$, we write 
\begin{align*}
\Omega^k(P,V)_\r = \set{\theta \in \Omega^k(P,V): (\forall g\in H)~\r(g) \ci R_g^\ast\theta = \t} 
\end{align*}
for the space of $H$-invariant $k$-forms on $P$ and $\Omega^k(P,V)_\r^\hor$ for the space of $H$-invariant $k$-forms that are horizontal with respect to $HP$ ($\exists i:~v_i \in V_pP\Rightarrow  \t(v_1,\dots,v_k)=0$) (cf. \cite[Definition 3.3]{Baum:2014}).
Moreover, given a compact set $K \subs M$ we define $\Omega^k_K(P,V)_\r := \set{\t \in \Omega^k_K(P,V)_\r: \supp (\t) \subs q\inv(K)}$ and write $\Omega^k_K(P,V)_\r^\hor$ for the analogous subspace in the horizontal case. We emphasise that these forms are in general not compactly supported in $P$ its self. As mentioned in the introduction we equip these spaces with the natural Fr{\'e}chet-topology and write $\Omega^k_c(P,V)_\r$ respectively $\Omega^k_c(P,V)_\r^\hor$ for the locally convex inductive limit of the spaces $\Omega^k_K(P,V)_\r$ respectively $\Omega^k_K(P,V)_\r^\hor$. This convention also clarifies what we mean by $C^\8(P,V)_\r$ respectively $C_c^\8(P,V)_\r$. 
\item In Lemma \ref{GruRealisation}, we recall that if $\mathbb{V}$ is the vector bundle associated to a principal bundle as in (\ref{Gruaaaa}), then the canonical isomorphism  of chain complexes $\Omega_c^\bl(P,V)_{\r}^\hor \cong \Omega^\bl_c(M,\mathbb{V})$ (see e.g. \cite[Theorem 3.5]{Baum:2014}) induces isomorphisms of locally convex spaces $\Omega_c^k(P,V)_{\r}^\hor \cong \Omega^k_c(M,\mathbb{V})$. 
\- Given a finite-dimensional vector bundle $V\hra \mathbb{V}\xra{q}M$ over a $\s$-compact manifold $M$, a compact set $K\subs M$ and $k \in \mathbb{N}_0$ we write $\go^k_K(M,\mathbb{V})$ for the space of $k$-forms on $M$ with values in the vector bundle $\mathbb{V}$ and support in $K$. Using the identification $\go^k(M,\mathbb{V}) \cong \gg(M,\gl^k T^\ast M \otimes V)$ we give these spaces the locally convex vector topology described in \cite{Gloeckner} and equip $\go_c^k(M,\mathbb{V})$ with the canonical inductive limit topology.  
\- Given a manifold $M$, we write $C^\8_p(\R,M)$ for the set of proper smooth maps from $\R$ to $M$. However, if $F$ is the total space of a fibre bundle $E\hra F\xra{q}M$, then we define $C^\8_p(\R,F):= \set{f\in C^\8(\R,F): q\ci f\in C^\8_p(\R,M)}$. 
\end{compactenum}

\section{Construction of the Lie group extension}\label{GruChLieEx}
We introduce the following conventions:
\begin{convention}\label{GruConvention9898}
\begin{compactenum}
\item All finite-dimensional manifolds are assumed to be $\sigma$-compact.
\item Analogously to \cite[p. 385 and p.388]{Neeb:2009} we consider following setting\footnote{In \cite{Neeb:2009} Neeb and Wockel also consider situations where the Lie groups $H$ and $G$ can be infinite-dimensional locally exponential Lie groups. See also Theorem \ref{GruMainI.2}, where we discuss the infinite-dimensional case.}: If not defined otherwise,  $H\hra P \xra{q}M$ denotes a finite-dimensional principal bundle (see also Theorem \ref{GruMainI.2} for the case where $H$ is infinite-dimensional) over a connected non-compact, $\s$-compact manifold $M$ and $\fh$ the Lie algebra of $H$\footnote{Like in \cite{Neeb:2004} it is crucial for our proof that the manifold $M$ is not compact. Hence our argumentation is not an alternative for the proof of \cite{Neeb:2009}}. Moreover let $G$ be a finite-dimensional Lie group (see also Theorem \ref{GruMainI.2} for the case where $H$ is infinite-dimensional) with Lie algebra $\fg$ and $\k_\fg \co \fg\ti \fg \ra V(\fg)=:V$ be the universal invariant bilinear map on $\fg$ (see e.g.  \cite[Chapter 4]{Gundogan:2011}). Let $\r_G \co H \ti G \ra G$ be a smooth action of $H$ on $G$ by Lie group automorphisms and $\r_\fg \co H \ti \fg \ra \fg$ be the derived action on $\fg$  by Lie algebra automorphisms ($\r_\fg(h,\bl)= L(\r_G(h,\bl)) \in \Aut(\fg)$).
We find a unique map $\r_V \co H \ti V \ra V$ that is linear in the second argument and fulfils $\r_V(h,\k_\fg(x,y)) = \k_\fg(\r_\fg(h,x),\r_\fg(h,y))$ for $x,y\in \fg$ and $h \in H$.
The vector space $V$ is generated by elements of the form $\kappa_\fg(x,y)$ with $x,y\in \fg$. To see that $\rho_V$ is also a representation we show $\rho_V(g,\rho_V(h,\kappa_\fg(x,y))) =  \rho_V(gh,\kappa_\fg(x,y))$ for $x,y\in \fg$ and $g,h \in H$:
\begin{align*}
&\rho_V(g,\rho_V(h,\kappa_\fg(x,y))) = \rho_V(g, \kappa_\fg(\rho_\fg(h,x), \rho_\fg(h,y)))\\
=& \kappa_\fg(\rho_\fg(g,\rho_{\fg}(h,x)), \rho_\fg(g,\rho_{\fg}(h,y)))
= \rho_V(gh,\kappa_\fg(x,y)).
\end{align*}
Because we can find a basis of $V$ consisting of vectors of the form $\kappa_\fg(x,y)$ the smoothness of $\rho_V$ follows.
We write $\mathcal{G}:= P\ti_{\r_G} G$ for the associated Lie group bundle (the definition of a Lie group bundle (respectively associated Lie group bundle) is completely analogous to the definition of a vector bundle (respectively associated Lie group bundle), just in the category of Lie groups), $\frak{G}: = P\ti_{\r_\fg} \fg$ for the associated Lie algebra bundle and $\mathbb{V}: = P\ti_{\r_V} V$ for the associated vector bundle to $H\hra P \ra M$. Let $VP$ be the vertical bundle of $TP$. We  fix a principal connection $HP \subs TP$ on the principal bundle $P$ and write $\pr_h\co TP\ra HP$ for the projection onto the horizontal bundle.
As pointed out in \cite[p. 385]{Neeb:2009} it is no loose of generality to assume the total space $P$ to be connected. Hence we do so in this paper.
\item Let $D_{\r_\fg} \co C^\8_c(P,\fg)_{\r_\fg} \ra \Omega^1_c(P,\fg)_{\r_\fg}^\hor$, $f\ms df\ci\pr_h$ and $D_{\r_V} \co C^\8_c(P,V)_{\r_V} \ra \Omega^1_c(P,V)_{\r_V}^\hor$, $f\ms df\ci\pr_h$ be the absolute derivatives corresponding to $HP$ (cf. \cite[Definition 3.8]{Baum:2014}). Moreover let $d_\frak{G} \co \gg_c(M,\frak{G}) \ra \Omega^1_c(M, \frak{G})$ and $d_\mathbb{V} \co \gg_c(M, \mathbb{V}) \ra \Omega^1_c(M, \mathbb{V})$ be the induced covariant derivations on the Lie algebra bundle $\frak{G}$ and the vector bundle $\mathbb{V}$ respectively (cf. \cite[p. 100 ff]{Baum:2014} and Lemma \ref{GruRealisation}).
\end{compactenum}
\end{convention}

In \cite[Appendix A]{Neeb:2009}, where $M$ is compact, Neeb and Wockel endowed the group of sections $\gg(M,\mathcal{G})$ of a Lie group bundle $\mathcal{G}$ that comes from a principal bundle $P$ with a Lie group structure. They used the identification $\gg(M,\mathcal{G})\cong C^\8(P,G)_{\r_G}$ and endowed the group $C^\8(P,G)_{\r_G}$ of $G$-invariant smooth maps from $P$ to $G$ with a Lie group structure by using the construction of a Lie group structure on the gauge group $\mathrm{Gau}(P)$ described in \cite{Wockel:2007}. To this end they replaced the conjugation of the structure group on itself by the Lie group action $\r_G$. 
In the following Definition \ref{GruLieGruppenStruktur}, we proceed analogously in the case where $M$ is non-compact but $\sigma$-compact. As the construction from \cite{Neeb:2009} is based on \cite{Wockel:2007}, our analogous definition is based on \cite[Chapter 4]{Schuett:2013}, because \cite[Chapter 4]{Schuett:2013} is the generalisation of \cite{Wockel:2007} to the non-compact case.

\begin{definition}\label{GruLieGruppenStruktur}
\begin{compactenum}
\- We equip the group 
\begin{align*}
C^\8_c(P,G)_{\r_G} = \{&\ph\in C^\8(P,G): (\exists K \subs M \text{ compact}) \supp(\ph)\subs q\inv(K)\\
&\tx{and} (\forall h\in H, p \in P) ~ \r_G(h)\ci \ph(pg) =\ph(p)\} 
\end{align*}
with the infinite-dimensional Lie group structure described in \cite[Chapter 4]{Schuett:2013}. We just replace the conjugation of $H$ on itself by the action $\r_G$ of $H$ on $G$.  We emphasise that the functions  $f\in  C^\8_c(P,G)_{\r_G}$ are not compactly supported in $P$ itself. The Lie algebra of $C^\8_c(P,G)_{\r_G}$ is given by the locally convex  Lie algebra
\begin{align*}
&C^\8_c(P,\fg)_{\r_\fg}= \{f\in C^\8(P,\fg)_{\r_\fg}: (\exists K \subs M \text{ compact}) \supp(f)\subs q\inv(K)\}\\
=& \varinjlim C^\8_K(P,\fg)_{\r_\fg},
\end{align*}
where $K$ runs through the compact subsets of $M$.
\- From \cite[Chapter 4]{Schuett:2013} (cf. \cite[Theorem 3.5]{Baum:2014} and Lemma \ref{GruRealisation}) we know $\gg_c(M,\frak{G}) \cong C^\8_c(P,\fg)_{\r_\fg}$ in the sense of topological vector spaces. Now, we endow $\gg_c(M,\mathcal{G})$ with the Lie group structure that turns the group isomorphism $\gg_c(M,\mathcal{G})\cong C^\8_c(P,G)_{\r_G}$ into an isomorphism of Lie groups. Hence $\gg_c(M,\mathcal{G})$ becomes an infinite-dimensional Lie group modelled over the locally convex space $\gg_c(M,\frak{G})$.
\end{compactenum}
\end{definition}

In the following definition we fix our notation for the quotient principal bundle. For details on the well-known concept of quotient principal bundles see e.g. \cite[Proposition 2.2.20]{Gundogan:2011}. 
\begin{definition}\label{GruQuotientenBuendel}
Let $N := \ker(\r_V) \subs H$ and  $H/N \hra P/N \xra{\ol{q}} M$ be the quotient bundle with projection $\ol{q} \co P/N \ra M,~ pN \ms q(p)$ and right action $\ol{R} \co H/N \ti P/N \ra P/N,~ ([g],pN) \ms (pg)N$. We write $\ol{H}:=H/N$ and  $\ol{P}:=P/N$. Let $\ol{\r}_V\co H/N \ra GL(V)$ be the factorisation of $\r_V$ over $N$ and $\p \co P \ra P/N$ the orbit projection. If $\ps\co q\inv(U)\ra U\ti H$ is a trivialisation of $P$, then $\ps'\co \ol{q}\inv(U)\ra U\ti \ol{H}$, $pN \ms (q(p), [\pr_2\ci \ps(p)])$ is a typical trivialisation of $\ol{P}$.  
It is well-known that $\mathbb{V}$ is isomorph to the associated bundle to $\ol{H} \hra \ol{P}\xra{\ol{q}} M$ via $\ol{\r}_V$ (see e.g. \cite[Remark 2.2.21]{Gundogan:2011}).
Moreover, we write $H\overline{P}:= T\pi(HP)$ for the canonical principal connection on $\overline{P}$ that comes from $P$ (see \ref{GruIsoQuotientBundle} (a)). We mention that $\pi^\ast \colon \Omega^k_c(\overline{P},V)^\hor_{\overline{\rho}} \rightarrow \Omega_c^k(P,V)^\hor_{\rho_V}$ is an isomorphism of topological vector spaces and induces an isomorphism of chain complexes (see \ref{GruIsoQuotientBundle} (c)).
\end{definition}

\begin{convention}
Analogously to \cite{Neeb:2009} we introduce the following convention. We assume that the identity-neighbourhood of $H$ acts trivially on $V$ by $\r_V$ (cf. \cite[p. 385]{Neeb:2009}). Hence $\ol{H}$ is a discrete Lie group. Moreover, we even assume $\ol{H}$ to be finite (cf. \cite[p. 386, p.398 f and Theorem 4.14]{Neeb:2009}). 
\end{convention}

\begin{definition}
Let $H\hra P \ra M$ be a principal bundle with connected total space $P$ and $\r_V \co H \ti V \ra V$ a linear representation. Moreover fix a connection $HP$ on $TP$ and let $D_{\r_V}$ be the induced absolute derivative of the associated vector bundle $\mathbb{V}$.
\begin{compactenum}
\item We define
\begin{align*}
&Z^1_{dR,c}(P,V)_{\r_V}:= \set{\t \in \Omega_c^1(P,V)^\hor_{\r_V}: D_{\r_V}\t =0}, \\
&B^1_{dR,c}(P,V)_{\r_V}:= D_{\r_V}(C^\8_c(P,V)_{\r_V}),\\
\end{align*}
and equip these spaces with the induced topology of $\Omega_c^1(P,V)^\hor_{\r_V}$.
\item We define
\begin{align*}
&Z^1_{dR,c}(P,V)_{\fix}:= Z^1_{dR,c}(P,V) \cap \Omega^1_c(P,V)_{\r_V},\\ 
&B^1_{dR,c}(P,V)_{\fix}:= B^1_{dR,c}(P,V) \cap \Omega_c^1(P,V)_{\r_V}
\end{align*}
and equip these spaces with the induced topology of $\Omega_c^1(P,V)_{\r_V}$.
\end{compactenum}
\end{definition}

\begin{lemma}\label{GruFIX88}
Let $H\hra P \ra M$ be a principal bundle and $\r_V \co H \ti V \ra V$ a linear representation. Moreover fix a connection $HP$ on $TP$ and let $D_{\r_V}$ be the induced absolute derivative of the associated vector bundle $\mathbb{V}$.
\begin{compactenum}
\item If $H$ is discrete we have
\begin{align*}
Z^1_{dR,c}(P,V)_{\r_V} = Z^1_{dR,c}(P,V)_\fix \tx{and} B^1_{dR,c}(P,V)_{\r_V} \subs B^1_{dR,c}(P,V)_\fix.
\end{align*}
Because in this situation all forms on $P$ are horizontal the topologies  on $Z^1_{dR,c}(P,V)_{\r_V}$ and  $Z^1_{dR,c}(P,V)_\fix$ coincide.
\- \label{GruFIXB1} If $H$ is finite we get $B^1_{dR,c}(P,V)_{\r_V} = B^1_{dR,c}(P,V)_\fix$. Again the topologies on these subspaces coincide, because the $\Omega^1_{dR,c}(P,V)_{\r_V}^\hor$ and $\Omega^1_{dR,c}(P,V)_{\r_V}$ are exactly the same topological vector spaces.
\end{compactenum}
\end{lemma}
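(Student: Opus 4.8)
The plan is to reduce both parts to the single observation that discreteness of $H$ collapses the connection. Since a discrete $H$ has trivial Lie algebra, the vertical bundle $VP=\ker(Tq)$ is the zero bundle, so the chosen connection is forced to satisfy $HP=TP$ and hence $\pr_h=\id_{TP}$. I would record two immediate consequences. First, every form on $P$ is automatically horizontal, so $\Omega^1_c(P,V)^\hor_{\r_V}$ and $\Omega^1_c(P,V)_{\r_V}$ are literally the same topological vector space; this is what settles the assertions about coinciding topologies in both (a) and (b). Second, on functions and on $1$-forms the absolute derivative reduces to the de Rham differential, $D_{\r_V}f=df\ci\pr_h=df$ and $D_{\r_V}\theta=d\theta\ci\pr_h=d\theta$.

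For part (a) I would combine these. From $D_{\r_V}=d$ on $1$-forms,
\[
Z^1_{dR,c}(P,V)_{\r_V}=\set{\theta\in\Omega^1_c(P,V)_{\r_V}:d\theta=0}=Z^1_{dR,c}(P,V)\ca\Omega^1_c(P,V)_{\r_V}=Z^1_{dR,c}(P,V)_\fix,
\]
which is the claimed equality. For the coboundaries, an element of $B^1_{dR,c}(P,V)_{\r_V}=D_{\r_V}(C^\8_c(P,V)_{\r_V})$ is of the form $df$ with $f\in C^\8_c(P,V)_{\r_V}$; such a form is de Rham exact, hence lies in $B^1_{dR,c}(P,V)$, and is $\r_V$-invariant, hence lies in $\Omega^1_c(P,V)_{\r_V}$. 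Thus it belongs to $B^1_{dR,c}(P,V)_\fix$, giving the inclusion $B^1_{dR,c}(P,V)_{\r_V}\subs B^1_{dR,c}(P,V)_\fix$.

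The substance of part (b) is the reverse inclusion $B^1_{dR,c}(P,V)_\fix\subs B^1_{dR,c}(P,V)_{\r_V}$, and here I would use finiteness of $H$ to average. Given $\theta\in B^1_{dR,c}(P,V)_\fix$, write $\theta=df$ with $f\in C^\8_c(P,V)$, $\supp(f)\subs q\inv(K)$, and recall $\theta$ is $\r_V$-invariant. I would set
\[
\ol f:=\frac{1}{|H|}\sum_{g\in H}\r_V(g)\ci(f\ci R_g)
\]
and check three points. First, a reindexing $g\ms hg$ in the sum gives $\r_V(h)\ci R_h^\ast\ol f=\ol f$, the defining invariance relation, so $\ol f\in C^\8(P,V)_{\r_V}$. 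Second, since $q(pg)=q(p)$, the support of $\ol f$ is again contained in $q\inv(K)$, so in fact $\ol f\in C^\8_c(P,V)_{\r_V}$. Third, each $\r_V(g)$ is a constant linear automorphism, so it commutes with $d$ and $d(f\ci R_g)=R_g^\ast(df)=R_g^\ast\theta$; using the invariance $\r_V(g)\ci R_g^\ast\theta=\theta$ of $\theta$ I obtain $d\ol f=\frac{1}{|H|}\sum_{g}\r_V(g)\ci R_g^\ast\theta=\theta$. Hence $\theta=d\ol f=D_{\r_V}\ol f\in B^1_{dR,c}(P,V)_{\r_V}$. Together with the inclusion from (a) (a finite group being discrete) this yields the asserted equality.

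I do not expect a serious obstacle. The only points needing care are the bookkeeping of the invariance convention $\r_V(g)\ci R_g^\ast(\bl)=(\bl)$ in the averaging step, and the observation that it is \emph{finiteness}, not mere discreteness, that makes the normalised sum available: for an infinite discrete $H$ the averaging breaks down, which is exactly why part (a) yields only an inclusion of coboundaries.
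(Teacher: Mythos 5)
Your proposal is correct and follows essentially the same route as the paper: discreteness of $H$ forces $HP=TP$, so that $D_{\r_V}$ reduces to the exterior derivative (settling part (a) and the topology claims), and finiteness of $H$ permits the averaging $f\ms \frac{1}{\#H}\sum_{g\in H}\r_V(g)\ci R_g^\ast f$ that gives the reverse inclusion in part (b). The paper's own proof is a terser version of the same argument, and your extra verifications (the invariance of the average, the support control, and the computation $d\ol{f}=\t$) are precisely the steps it leaves implicit.
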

\begin{proof}
\begin{compactenum}
\item If $H$ is discrete there is only one connection on $P$ namely $HP=TP$. Hence $D_{\r_V}$ becomes the normal exterior derivative.
\- Let $n:=\#H$ and $\t \in B^1_{dR,c}(P,V)_\fix$ with $\t = df$ for $f\in C^\8_c(P,V)$. For $\ph \in C^\8_c(P,V)$ and $g\in H$ we write $g.\ph := \r_V(g)\ci R_g^\ast \ph$ and get $\fr{1}{n} \. \sum_{g\in H} g.f \in C^\8_c(P,V)_{\r_{V}}$. Moreover $d(\fr{1}{n} \. \sum_{g\in H} g.f) = \t$. Hence $B^1_{dR,c}(P,V)_{\r_V} = B^1_{dR,c}(P,V)_\fix$.
\end{compactenum}
\end{proof}

\begin{lemma}\label{GruBkdlc89}
Let $H\hra P \xra{q} M$ be a principal bundle with finite structure group $H$ and connected total space $P$. Moreover let $\r_V \co H \ti V \ra V$ be a finite-dimensional linear representation, $HP$ a connection on $TP$ and $D_{\r_V}$ be the induced absolute derivative of the associated vector bundle $\mathbb{V}$.
\begin{compactenum}
\item  The map $q$ is proper. Hence in this case the forms in $\Omega^k_c(P,V)$ are exactly the compactly supported forms in $P$.
\item The space $B^1_{dR,c}(P,V) = dC^\8_c(P,V)$ is a closed subspace of $\Omega^1_c(P,V)$.
\end{compactenum}
\end{lemma}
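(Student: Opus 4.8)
The plan is to treat the two assertions separately, using (a) to put (b) into the setting of honest compactly supported forms on $P$.

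\medskip

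For (a): since $H$ is finite and discrete, $q\colon P\ra M$ is a finite-sheeted covering, hence a local homeomorphism with fibres of cardinality $\#H$. I would prove properness directly. Given a compact $K\subseteq M$, cover $K$ by finitely many trivialising open sets and, using local compactness of $M$, shrink them to relatively compact opens $V_1,\dots,V_r$ with $\overline{V_i}$ compact and $\overline{V_i}\subseteq U_i$ for a trivialising $U_i$. Under $q^{-1}(U_i)\cong U_i\ti H$ the set $q^{-1}(\overline{V_i})$ corresponds to $\overline{V_i}\ti H$, which is compact as a finite product of compacta. Since $K\subseteq\bigcup_i\overline{V_i}$, the closed set $q^{-1}(K)$ is a closed subset of the compact set $\bigcup_i q^{-1}(\overline{V_i})$, hence compact. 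The consequence for forms is then immediate: a form lies in $\Omega^k_c(P,V)$ iff its support is contained in some $q^{-1}(K)$ with $K$ compact, and by properness each such $q^{-1}(K)$ is compact; conversely a compactly supported form with support $S$ satisfies $S\subseteq q^{-1}(q(S))$ with $q(S)$ compact. So the two notions of support coincide and $\Omega^k_c(P,V)$ is the usual space of compactly supported $V$-valued $k$-forms with its standard $LF$-topology.

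\medskip

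For (b): by (a) and by Lemma \ref{GruFIX88}(a) (only the connection $HP=TP$ occurs, so $D_{\r_V}=d$) the claim becomes the general statement that $dC^\8_c(P,V)$ is closed in $\Omega^1_c(P,V)$ for the manifold $P$. First I would reduce to scalar coefficients: choosing a basis of the finite-dimensional space $V$ identifies $\Omega^k_c(P,V)\cong\Omega^k_c(P,\R)^{\dim V}$ as locally convex spaces with $d$ acting diagonally, so it suffices to show that $B:=dC^\8_c(P,\R)$ is closed in $\Omega^1_c(P,\R)$. The key is to exhibit $B$ as a common kernel of continuous linear maps. Let $n=\dim P$ and let $\mathcal{L}$ be the orientation line bundle of $P$ (trivial if $P$ is orientable), so that for a closed $\beta\in\Omega^{n-1}(P,\mathcal{L})$ the product $\alpha\wedge\beta$ is a compactly supported density. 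I claim
\begin{align*}
B = \set{\alpha \in \Omega^1_c(P,\R) : d\alpha = 0 \text{ and } \int_P \alpha\wedge\beta = 0 \text{ for every closed } \beta\in\Omega^{n-1}(P,\mathcal{L})}.
\end{align*}
The inclusion ``$\subseteq$'' is Stokes: if $\alpha=df$ with $f\in C^\8_c(P,\R)$ and $d\beta=0$, then $\alpha\wedge\beta=d(f\beta)$ has compact support and integrates to $0$. The reverse inclusion is precisely the injectivity of the Poincar\'e duality map $H^1_{dR,c}(P)\ra\big(H^{n-1}_{dR}(P,\mathcal{L})\big)^\ast$, $[\alpha]\ms\big([\beta]\ms\int_P\alpha\wedge\beta\big)$.

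\medskip

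Granting this characterisation, closedness follows formally: $\set{d\alpha=0}$ is the kernel of the continuous operator $d\colon\Omega^1_c(P,\R)\ra\Omega^2_c(P,\R)$, and for each fixed closed $\beta$ the functional $\alpha\ms\int_P\alpha\wedge\beta$ is continuous on every Fr\'echet step $\Omega^1_K(P,\R)$ and hence on the inductive limit. A common kernel of continuous linear maps into Hausdorff spaces is closed, so $B$ is closed; by the reduction to scalars, $dC^\8_c(P,V)$ is closed in $\Omega^1_c(P,V)$.

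\medskip

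The main obstacle is the reverse inclusion, i.e.\ the injectivity direction of Poincar\'e duality for a possibly non-orientable, non-finite-type $P$. I would justify it without finiteness hypotheses as follows. For oriented $P$, Poincar\'e duality (valid for all oriented smooth manifolds) identifies $H^1_{dR,c}(P)\cong H_{n-1}(P;\R)$, and under de Rham's theorem the integration pairing corresponds to the canonical evaluation $H_{n-1}(P;\R)\ti H^{n-1}(P;\R)\ra\R$. Over the field $\R$ the universal coefficient theorem gives $H^{n-1}(P;\R)\cong\Hom(H_{n-1}(P;\R),\R)$, so the induced map $H_{n-1}(P;\R)\ra\big(H^{n-1}(P;\R)\big)^\ast$ is the canonical (injective) map into the bidual; this is exactly the required injectivity. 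The non-orientable case reduces to this by carrying the orientation bundle $\mathcal{L}$ through the argument (twisted Poincar\'e duality), or by passing to the orientation double cover, which is again a finite cover so that pullback preserves compact supports. The remaining points -- continuity of the pairing functionals and strictness of the $LF$-limits -- are routine.
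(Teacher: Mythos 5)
Your argument is correct, but it takes a genuinely different route from the paper, whose proof of this lemma consists of two citations: part (a) is \cite[Lemma 10.2.11]{Napier:2011} (a fibre bundle of topological manifolds with finite fibre has proper projection), and part (b) is exactly \cite[Lemma IV.11]{Neeb:2004}, applied to the connected finite-dimensional manifold $P$; that lemma, and the surrounding machinery which this paper reuses (e.g.\ Lemma \ref{GruDiskLemma}), rests on characterising exact compactly supported $1$-forms through the vanishing of their integrals over closed and proper curves. Your direct verification of (a) via finitely many relatively compact trivialising sets is a sound, self-contained replacement for the citation. For (b), both strategies exhibit $dC^\8_c(P,\R)$ as a common kernel of continuous linear maps on the LF-space $\Omega^1_c(P,\R)$; the difference is which functionals are used. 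The curve-integration approach uses one-dimensional functionals $I_\a$ (with proper curves $\a\co\R\ra P$ needed to kill the locally constant primitive on the unbounded ends), and has the advantage that these same functionals are required later anyway (Theorem \ref{GruMainI}). You instead use the operator $d$ together with the degree-$(n-1)$ functionals $\a\ms\int_P\a\we\b$, and the whole weight of the argument falls on the injectivity of $H^1_{dR,c}(P)\ra\big(H^{n-1}_{dR}(P,\mathcal{L})\big)^\ast$ for a possibly non-orientable manifold of infinite topological type; this is true, but it is the one step you should pin down with a precise reference (it follows from the surjectivity half of Poincar\'e duality, $H^{n-1}_{dR}(P,\mathcal{L})\twoheadrightarrow\big(H^1_{dR,c}(P)\big)^\ast$, valid for arbitrary manifolds, or from your singular-homology/bidual detour), since the finite-type versions usually quoted from \cite{Bott:1982} do not literally suffice. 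Your reduction of the non-orientable case to the orientation double cover is consistent with the paper's own handling of finite covers (Lemma \ref{GruIsoEndlCover} and the averaging trick of Lemma \ref{GruFIX88}).
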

\begin{proof}
\begin{compactenum}
\item \cite[Lemma 10.2.11]{Napier:2011} tells us that  if $F\hra \mathbb{F} \xra{q} M$ is a continuous fibre bundle of finite-dimensional topological manifolds and $F$ is finite, then $q$ is a proper map (a more general statement in the setting of topological spaces is stated in \cite[Exercise A.75.]{Lee:2013} (but it is not of interest for our considerations)).
\item \cite[Lemma IV.11]{Neeb:2004} tells us that, if $M$ is a connected finite-dimensional manifold and $V$ a finite-dimensional vector space, then $B^1_{dR,c}(M,V)=dC^\8_c(M,V)$ is a closed subspace of $\Omega^1_c(M,V)$. 
\end{compactenum}
\end{proof}

For the corresponding statement to the following lemma in the case of a compact base manifold, compare \cite[p. 385 f]{Neeb:2009}.
\begin{lemma}\label{GruAbgeschlossenDrVff}
The subspace $D_{\r_V}C^\8_c(P,V)_{\r_V} \subs \Omega_c^1(P,V)_{\r_V}^\hor$ is closed. 
\end{lemma}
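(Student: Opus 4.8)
The plan is to reduce the assertion to the quotient bundle $\ol{H}\hra\ol{P}\xra{\ol{q}}M$ of Definition \ref{GruQuotientenBuendel}, whose structure group $\ol{H}=H/N$ is finite, and then to feed this into the closedness result of Lemma \ref{GruBkdlc89}. On $P$ itself the absolute derivative $D_{\r_V}$ is a genuine covariant derivative and the structure group $H$ need not be finite, so Lemma \ref{GruBkdlc89} does not apply directly; passing to $\ol{P}$ is precisely what turns the structure group finite and $D_{\r_V}$ into an ordinary exterior derivative.

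First I would note that every $f\in C^\8_c(P,V)_{\r_V}$ is constant along the $N$-orbits: for $h\in N=\ker(\r_V)$ the invariance condition yields $f(ph)=\r_V(h)\inv f(p)=f(p)$. Hence $f$ factors through the orbit projection $\pi\co P\ra\ol{P}$, so that $\pi^\ast\co C^\8_c(\ol{P},V)_{\ol{\r}}\ra C^\8_c(P,V)_{\r_V}$ is an isomorphism. Combining this with the topological isomorphism $\pi^\ast\co\Omega^1_c(\ol{P},V)^\hor_{\ol{\r}}\ra\Omega^1_c(P,V)^\hor_{\r_V}$ recorded in Definition \ref{GruQuotientenBuendel}, and using that $\pi^\ast$ is an isomorphism of chain complexes, the differential $D_{\r_V}$ on $P$ is intertwined with the covariant exterior derivative on $\ol{P}$; since $\ol{H}$ is discrete we have $H\ol{P}=T\ol{P}$, so the latter is just the ordinary differential $d$. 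Therefore
\begin{align*}
D_{\r_V}\bi(C^\8_c(P,V)_{\r_V}\bi) = \pi^\ast\bi(d\,C^\8_c(\ol{P},V)_{\ol{\r}}\bi),
\end{align*}
and, $\pi^\ast$ being a topological isomorphism, it suffices to show that $d\,C^\8_c(\ol{P},V)_{\ol{\r}}=B^1_{dR,c}(\ol{P},V)_{\ol{\r}}$ is closed in $\Omega^1_c(\ol{P},V)^\hor_{\ol{\r}}=\Omega^1_c(\ol{P},V)_{\ol{\r}}$.

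To establish this, I would first rewrite the space using the finite case of Lemma \ref{GruFIX88}, which gives
\begin{align*}
B^1_{dR,c}(\ol{P},V)_{\ol{\r}}=B^1_{dR,c}(\ol{P},V)_\fix=B^1_{dR,c}(\ol{P},V)\cap\Omega^1_c(\ol{P},V)_{\ol{\r}}.
\end{align*}
Now both factors are closed in $\Omega^1_c(\ol{P},V)$: the total space $\ol{P}$ is connected (a continuous image of the connected $P$) and its structure group $\ol{H}$ is finite, so part (a) of Lemma \ref{GruBkdlc89} makes $\ol{q}$ proper and part (b) shows that $B^1_{dR,c}(\ol{P},V)=d\,C^\8_c(\ol{P},V)$ is closed in $\Omega^1_c(\ol{P},V)$; and the invariant subspace $\Omega^1_c(\ol{P},V)_{\ol{\r}}$ is closed as the intersection of the kernels of the continuous linear maps $\t\ms\ol{\r}_V(g)\ci\ol{R}_g^\ast\t-\t$, $g\in\ol{H}$. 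Hence their intersection is closed in $\Omega^1_c(\ol{P},V)$ and a fortiori in the subspace $\Omega^1_c(\ol{P},V)_{\ol{\r}}=\Omega^1_c(\ol{P},V)^\hor_{\ol{\r}}$, as desired.

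The main obstacle is the reduction step of the second paragraph: one must verify that $\pi^\ast$ is simultaneously a topological isomorphism and a morphism of chain complexes intertwining $D_{\r_V}$ with $d$, so that both the image and the closedness transfer faithfully (and that the topology on the invariant forms is indeed the subspace topology). Once this bookkeeping is in place, the genuinely analytic content — closedness of compactly supported exact forms on a connected finite-dimensional manifold — is supplied entirely by part (b) of Lemma \ref{GruBkdlc89}, ultimately by \cite[Lemma IV.11]{Neeb:2004}.
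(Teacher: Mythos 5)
Your proposal is correct and follows essentially the same route as the paper: reduce via $\pi^\ast$ to the quotient bundle $\ol{P}$ with finite structure group, identify $dC^\8_c(\ol{P},V)_{\ol{\r}_V}$ with $B^1_{dR,c}(\ol{P},V)_\fix$ using Lemma \ref{GruFIX88}(b), and write this as an intersection of the closed set $B^1_{dR,c}(\ol{P},V)$ (Lemma \ref{GruBkdlc89}) with the kernels of the continuous maps $\ol{\r}_V(g)\ci\ol{R}_g^\ast-\id$. The only cosmetic difference is that you spell out the reduction step in more detail, whereas the paper delegates it to Definition \ref{GruQuotientenBuendel} and Lemma \ref{GruIsoQuotientBundle}, and it justifies the final passage to $\Omega^1_c(\ol{P},V)_{\ol{\r}_V}$ by noting its topology is finer than the induced one — which is exactly the point you flag as needing verification.
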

\begin{proof}
The lemma simply says that $d\gg_c(M,\mathbb{V})$ is closed in $\Omega_c^1(M,\mathbb{V})$. Hence it is enough to show that the subspace $dC^\8_c(\ol{P},V)_{\ol{\r}_V}$ is closed in $\Omega_c^1(\ol{P},V)_{\ol{\r}_V}^\hor = \Omega_c^1(\ol{P},V)_{\ol{\r}_V}$. We know  that $B^1_{dR,c}(\ol{P},V)$ is closed in $\Omega^1_c(\ol{P},V)$. We calculate 
\begin{align*}
&dC^\8_c(\ol{P},V)_{\ol{\r}_V} = B^1_{dR,c}(\ol{P},V)_{\fix} = \bigcap_{g\in \ol{H}} \set{\t \in B^1_{dR,c}(\ol{P},V): \ol{\r}_V(g)\ci \ol{R}_g^\ast \t =\t}\\
= &\bigcap_{g\in \ol{H}} (\ol{\r}_V(g)\ci \ol{R}_g^\ast - \id)\inv\set{0}.
\end{align*}
We see that $dC^\8_c(\ol{P},V)_{\ol{\r}_V}$ is closed in $\Omega^1_c(\ol{P},V)$. Because the topology of $\Omega^1_c(\ol{P},V)_{\rho_V}$ is finer then the induced topology of $\Omega^1_c(\ol{P},V)$, the space $dC^\8_c(\ol{P},V)_{\ol{\r}_V}$ is also closed in $\Omega^1_c(\ol{P},V)_{\rho_V}$.
\end{proof}

\begin{definition}
Let $H\hra P \ra M$ be a principal bundle with connected total space $P$ and $\r_V \co H \ti V \ra V$ a linear representation. Moreover fix a connection $HP$ on $TP$ and let $D_{\r_V}$ be the induced absolute derivative of the associated vector bundle $\mathbb{V}$.
\begin{compactenum}
\item If the quotient group $H/\ker(\r_V)$ is finite (this of course includes the case where the group $H$ is finite) we define
\begin{align*}
H^1_{dR,c}(P,V)_{\r_V}:=Z^1_{dR,c}(P,V)_{\r_V} / B^1_{dR,c}(P,V)_{\r_V} \cong H^1_{dR,c}(M,\mathbb{V}).
\end{align*}
Because of Lemma \ref{GruAbgeschlossenDrVff} this is a Hausdorff locally convex space.
\item We have a canonical $H$-module structure on $H^1_{dR,c}(P,V)$ given by $H\ti H^1_{dR,c}(P,V) \ra H^1_{dR,c}(P,V)$, $(h,[\t]) \ms [\r_V(h)\ci R_h^\ast \t]$. As usual we call the fixed points of this action $\r_V$-invariant. If the group $H$ is finite we define
\begin{align*}
H^1_{dR,c}(P,V)_{\fix}:= \set{ [\t] \in H^1_{dR,c}(P,V): [\t]\tx{is}\r_V\text{-invariant}}
\end{align*}
and because of Lemma \ref{GruBkdlc89} the space  $H^1_{dR,c}(P,V)_{\fix}$ becomes a Hausdorff locally convex space as a closed subspace of the Hausdorff locally convex space $H^1_{dR,c}(P,V)$.
\end{compactenum}
\end{definition}

It is possible to show the following Lemma \ref{GruFIX}
by a more abstract argument using that under certain conditions the fixed point functor is exact like it was done in the compact case in  \cite[Remark 4.12]{Neeb:2009}.
\begin{lemma}\label{GruFIX}
Let $H\hra P \ra M$ be a principal bundle and $\r_V \co H \ti V \ra V$ a linear representation. 
If $H$ is finite we get
\begin{align*}
H^1_{dR,c}(P,V)_\fix  \cong Z^1_{dR,c}(P,V)_{\fix} / B^1_{dR,c}(P,V)_{\fix}.
\end{align*}
in the sense of topological vector spaces.
\end{lemma}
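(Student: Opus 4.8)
The plan is to exploit that, since $H$ is finite, the operator
\[
\A := \fr{1}{\#H} \. \sum_{g\in H} \r_V(g)\ci R_g^\ast
\]
is a well-behaved averaging projection. First I would record its formal properties: it is a continuous linear endomorphism of $\Omega^1_c(P,V)$ (a finite combination of the continuous operators $\r_V(g)\ci R_g^\ast$), it commutes with the exterior derivative (because $R_g^\ast$ is natural and $\r_V(g)$ is constant on the coefficients), and it is a projection onto the invariant forms $\Omega^1_c(P,V)_{\r_V}$, the assignment $g.\tau = \r_V(g)\ci R_g^\ast \tau$ being a genuine left action ($R_g^\ast R_h^\ast = R_{gh}^\ast$ and $\r_V$ a representation). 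The very same formula averages functions in $C^\8_c(P,V)$ to invariant ones and again commutes with $d$. Since $H$ is finite, all forms are horizontal, so by Lemma \ref{GruFIX88} the spaces $\Omega^1_c(P,V)_{\r_V}^\hor$ and $\Omega^1_c(P,V)_{\r_V}$ agree as topological vector spaces and $D_{\r_V}=d$ throughout.

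Next I would produce the candidate isomorphism. The inclusion $Z^1_{dR,c}(P,V)_\fix \hra Z^1_{dR,c}(P,V)$ followed by the quotient onto $H^1_{dR,c}(P,V)$ lands in the fixed points and annihilates $B^1_{dR,c}(P,V)_\fix$, hence descends to a map
\[
\Phi\co Z^1_{dR,c}(P,V)_\fix / B^1_{dR,c}(P,V)_\fix \ra H^1_{dR,c}(P,V)_\fix.
\]
Injectivity is immediate: if an invariant closed form $\tau$ is exact in $\Omega^1_c(P,V)$, then it already lies in $B^1_{dR,c}(P,V)_\fix = B^1_{dR,c}(P,V)\cap \Omega^1_c(P,V)_{\r_V}$, so its class vanishes. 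For surjectivity I would average a representative: given $[\tau]\in H^1_{dR,c}(P,V)_\fix$, the form $\A\tau$ is invariant, closed and compactly supported, and $[\A\tau] = \fr{1}{\#H}\. \sum_g g.[\tau] = [\tau]$ since $[\tau]$ is fixed; thus $\A\tau \in Z^1_{dR,c}(P,V)_\fix$ and $\Phi([\A\tau]) = [\tau]$.

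Finally I would upgrade this algebraic bijection to a topological isomorphism by writing down a continuous inverse, rather than invoking any open-mapping principle (unavailable for these inductive-limit spaces). Continuity of $\Phi$ follows from the universal property of the quotient, the inclusion carrying the subspace topology. For the inverse, the map $\tau \ms [\A\tau]$ from $Z^1_{dR,c}(P,V)$ to $Z^1_{dR,c}(P,V)_\fix / B^1_{dR,c}(P,V)_\fix$ is continuous (as $\A$ is continuous and sends closed forms to closed invariant forms), and it kills $B^1_{dR,c}(P,V)$ because $\A(df)=d(\A f)$ with $\A f$ an invariant compactly supported function, so $d(\A f)\in B^1_{dR,c}(P,V)_\fix$; hence it descends to a continuous map $\ol{\Psi}\co H^1_{dR,c}(P,V) \ra Z^1_{dR,c}(P,V)_\fix / B^1_{dR,c}(P,V)_\fix$. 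Restricting $\ol{\Psi}$ to the closed subspace $H^1_{dR,c}(P,V)_\fix$ yields a continuous two-sided inverse of $\Phi$: the identities $\A\tau'=\tau'$ for invariant $\tau'$ and $[\A\tau]=[\tau]$ for fixed classes give $\ol{\Psi}\ci\Phi=\id$ and $\Phi\ci\ol{\Psi}=\id$. The main obstacle is exactly this topological step — one must secure Hausdorffness of both quotients (ultimately from the closedness of $B^1_{dR,c}(P,V)$ in Lemma \ref{GruBkdlc89}) and realise the inverse explicitly through the averaging retraction, since a continuous linear bijection of such spaces need not be open.
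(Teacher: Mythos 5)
Your proposal is correct and follows essentially the same route as the paper: the paper also considers the continuous map $Z^1_{dR,c}(P,V)_\fix \ra H^1_{dR,c}(P,V)_\fix$, identifies its kernel with $B^1_{dR,c}(P,V)_\fix$, proves surjectivity by averaging a representative, and obtains the continuous inverse explicitly from the averaging operator $[\t]\ms\bigl[\fr{1}{n}\sum_{g\in H}g.\t\bigr]$ (whose continuity rests on the action not enlarging supports), rather than via an open-mapping argument. Your observation that injectivity is immediate from the definition $B^1_{dR,c}(P,V)_\fix = B^1_{dR,c}(P,V)\cap\Omega^1_c(P,V)_{\r_V}$ is in fact a slight streamlining of the paper's kernel computation.
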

\begin{proof}
Let $n:=\#H$. We consider the linear map $\ps \co Z^1_{dR,c}(P,V)_\fix \ra H^1_{dR,c}(P,V)_\fix,~ \t \ms [\t]$. The map $\ps$ is continuous because the inclusion $Z_{dR,c}(P,V)_\fix \hra \Omega^1_{c}(P,V)$ is continuous and so the canonical map $Z_{dR,c}(P,V)_\fix \ra H^1_{dR,c}(P,V)$ is continuous. If $[\t] \in H^1_{dR,c}(P,V)_\fix$ with $\t = df$ for $f\in C^\8_c(P,V)$, then $[\t] = [d (\fr{1}{n} \sum_{g\in H}g.f)]$ and  $d (\fr{1}{n} \sum_{g\in H}g.f) \in B^1_{dR,c}(P,V)_\fix$ so $\ker(\ps) \subs B^1_{dR,c}(P,V)_\fix$. Obviously $B^1_{dR,c}(P,V)_\fix \subs \ker(\ps)$. Now we show that $\ps$ is surjective. If $[\t] \in H^1_{dR,c}(P,V)_\fix$, then $[\t] = [\fr{1}{n}\. \sum_{g\in H} g.\t]$ and $\fr{1}{n}\. \sum_{g\in H} g.\t \in Z^1_{dR,c}(P,V)_\fix$. Hence $\ps$ factors through a continuous bijective linear map $\ol{\ps} \co  Z^1_{dR,c}(P,V)_{\fix} / B^1_{dR,c}(P,V)_{\fix} \ra H^1_{dR,c}(P,V)_\fix$. It is left to show that $\ps$ is also open. We define 
\begin{align*}
\tau \co H_{dR,c}^1(P,V)\ra Z^1_{dR,c}(P,V)_{\fix} / B^1_{dR,c}(P,V)_{\fix}, [\t] \ms \left[\frac{1}{n} \cdot \sum_{g\in H} g.\t\right]. 
\end{align*}
Obviously $\tau|_{H_{dR,c}^1(P,V)_\fix}$ is inverse to $\ol{\ps}$. The map  
\begin{align*}
\Omega^1_{c}(P,V) \ra \Omega^1_{c}(P,V)_{\r_V},~\t \ms \frac{1}{n} \cdot \sum_{g\in H} g.\t
\end{align*}
is continuous, because the action $g.\t = \rho(g)\ci R_g^\ast \t$ dose not enlarge the support of a given form.
\end{proof}

\begin{corollary}\label{GruCorolllaryol}
Considering the principal bundle $\ol{H}\hra \ol{P} \ra M$ with the action $\ol{\r}_V$ we have 
\begin{align*}
H^1_{dR,c}(\ol{P},V)_\fix  \cong Z^1_{dR,c}(\ol{P},V)_{\fix} / B^1_{dR,c}(\ol{P},V)_{\fix}\cong H^1_{dR,c}(\ol{P},V)_{\ol{\r}_V}.
\end{align*}
\end{corollary}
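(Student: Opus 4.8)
The plan is to read this corollary as nothing more than a transcription of Lemma~\ref{GruFIX88} and Lemma~\ref{GruFIX} to the quotient bundle $\ol{H}\hra \ol{P}\xra{\ol{q}} M$. By the standing convention $\ol{H}$ is finite, and since $P$ is connected and the orbit projection $\pi\co P\ra \ol{P}$ is a continuous surjection, $\ol{P}$ is connected as well; moreover $\ol{\r}_V\co \ol{H}\ra \GL(V)$ is a finite-dimensional linear representation (in fact a faithful one, as $\ker(\ol{\r}_V)$ is trivial by construction). Thus every hypothesis of the two lemmas is met for the triple $(\ol{H},\ol{P},\ol{\r}_V)$ together with the induced connection $H\ol{P}$, and I would simply apply them in turn.

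For the first isomorphism I would invoke Lemma~\ref{GruFIX} directly with the finite group $\ol{H}$ acting on $\ol{P}$ via $\ol{\r}_V$, which yields $H^1_{dR,c}(\ol{P},V)_\fix \cong Z^1_{dR,c}(\ol{P},V)_{\fix} / B^1_{dR,c}(\ol{P},V)_{\fix}$ as topological vector spaces. For the second isomorphism I would use Lemma~\ref{GruFIX88}: because $\ol{H}$ is finite, and hence discrete, part~(a) gives $Z^1_{dR,c}(\ol{P},V)_{\ol{\r}_V}=Z^1_{dR,c}(\ol{P},V)_\fix$ and part~(\ref{GruFIXB1}) gives $B^1_{dR,c}(\ol{P},V)_{\ol{\r}_V}=B^1_{dR,c}(\ol{P},V)_\fix$, in each case with \emph{coinciding} topologies. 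Passing to the quotients then identifies $H^1_{dR,c}(\ol{P},V)_{\ol{\r}_V}=Z^1_{dR,c}(\ol{P},V)_{\ol{\r}_V}/B^1_{dR,c}(\ol{P},V)_{\ol{\r}_V}$ with $Z^1_{dR,c}(\ol{P},V)_{\fix} / B^1_{dR,c}(\ol{P},V)_{\fix}$, and chaining the two displayed identifications produces the full chain of isomorphisms claimed. The Hausdorffness needed to make the quotient topologies well behaved is supplied by Lemma~\ref{GruAbgeschlossenDrVff} (closedness of $B^1_{dR,c}(\ol{P},V)_{\ol{\r}_V}$) together with Lemma~\ref{GruBkdlc89} (closedness of $B^1_{dR,c}(\ol{P},V)$), exactly as recorded in the definitions preceding Lemma~\ref{GruFIX}.

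There is essentially no analytic obstacle here, since all the real work was already carried out in Lemma~\ref{GruFIX88} and Lemma~\ref{GruFIX}; the only points that require genuine care are bookkeeping ones. Concretely, I would make sure that $\ol{P}$ is connected and that $\ol{\r}_V$ is finite-dimensional (so that the lemmas literally apply), and---most importantly---that the two identifications furnished by Lemma~\ref{GruFIX88} are equalities of \emph{topological} vector spaces rather than of bare vector spaces, so that the induced maps on the quotients are homeomorphisms and not merely continuous bijections. The latter is precisely what the ``the topologies coincide'' clauses in Lemma~\ref{GruFIX88} guarantee, so the assembly of the two isomorphisms is then immediate.
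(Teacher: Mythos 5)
Your argument is correct and is precisely the intended one: the paper states this as an immediate corollary of Lemma~\ref{GruFIX} applied to the finite quotient bundle $\ol{H}\hra\ol{P}\ra M$ together with the identifications $Z^1_{dR,c}(\ol{P},V)_{\ol{\r}_V}=Z^1_{dR,c}(\ol{P},V)_\fix$ and $B^1_{dR,c}(\ol{P},V)_{\ol{\r}_V}=B^1_{dR,c}(\ol{P},V)_\fix$ (with coinciding topologies) from Lemma~\ref{GruFIX88}. Your hypothesis checks (connectedness of $\ol{P}$, finiteness of $\ol{H}$, finite-dimensionality of $\ol{\r}_V$) are exactly the bookkeeping the paper leaves implicit.
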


The following lemma is a generalisation of considerations in \cite[p. 399 and Remark 4.12]{Neeb:2009} from the compact case to the non-compact case.
\begin{lemma}
\begin{compactenum}
\item If we endow $H^1_{dR,c}(M,V)$ with the canonical $\ol{H}$-module structure $\ol{H}\ti H^1_{dR,c}(M,V) \ra H^1_{dR,c}(M,V),~ (h,[\t]) \ms [\ol{\r}_V(h) \ci \t]$, the map $\ol{q}^\ast \co H^1_{dR,c}(M,V) \ra H^1_{dR,c}(\ol{P},V)$ becomes an isomorphism of $\ol{H}$-modules. And $H^1_{dR,c}(M,V)_\fix \cong_{\ol{q}^\ast} H^1_{dR,c}(\ol{P},V)_\fix$.
\item We have 
\begin{align}\label{GruIsoDeRham}
H^1_{dR,c}(M,V_\fix) \cong H^1_{dR,c}(M,V)_\fix
\end{align}
if we write $V_\fix$ for the sup space of fixed points in $V$ by the action $\ol{\r}_V$.  
\item 
The map 
\begin{align*}
H^1_{dR,c}(M,V_\fix) \ra H_{dR,c}^1(P,V)_{\r_V},~ [\t] \ms [q^\ast \t]
\end{align*}
is an isomorphism of topological vector spaces.
\end{compactenum}
\end{lemma}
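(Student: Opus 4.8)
\emph{Overall strategy.} The plan is to transport all three statements to the finite regular covering $\ol q\co\ol P\ra M$ with deck group $\ol H$ and to the flat associated bundle $\mathbb V=\ol P\ti_{\ol\r_V}V$: I would treat (a) by a transfer/averaging argument over $\ol H$, (b) by an elementary splitting of $V$ as an $\ol H$-module, and (c) by composing (a) and (b) with the already established Corollary \ref{GruCorolllaryol} and the realization isomorphism, after which the topological part is checked by exhibiting continuous inverses.

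\emph{Part (a).} For (a) I would first note that $\ol q^\ast$ is a support-preserving morphism of cochain complexes $\Omega^\bl_c(M,V)\ra\Omega^\bl_c(\ol P,V)$, support-preserving because $\ol q$ is proper (its fibres are the finite set $\ol H$), and that it intertwines the coefficient action $\ol\r_V(h)$ on the source with the combined action $\ol\r_V(h)\ci\ol R_h^\ast$ on the target, since $\ol q\ci\ol R_h=\ol q$ gives $\ol R_h^\ast\ci\ol q^\ast=\ol q^\ast$. Writing $n:=\#\ol H$ and using the fibrewise transfer $\ol q_!$, the identities $\ol q_!\ci\ol q^\ast=n\cdot\id$ and $\ol q^\ast\ci\ol q_!=\sum_{h\in\ol H}\ol R_h^\ast$ show that $\ol q^\ast$ is injective on cohomology and that $\tfrac1n\ol q_!$ is a continuous left inverse; identifying the image of $\ol q^\ast$ and restricting to the fixed subspaces---by the same averaging device as in Lemma \ref{GruFIX}---then gives $H^1_{dR,c}(M,V)_\fix\cong H^1_{dR,c}(\ol P,V)_\fix$.

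\emph{Part (b).} For (b), finiteness of $\ol H$ and characteristic zero let me split $V=V_\fix\op W$ as $\ol H$-modules with $W^{\ol H}=0$. By additivity of $H^1_{dR,c}(M,\bl)$ in the constant coefficients this gives $H^1_{dR,c}(M,V)\cong H^1_{dR,c}(M,V_\fix)\op H^1_{dR,c}(M,W)$ as $\ol H$-modules, where $\ol H$ acts trivially on the first summand (it acts only through $\ol\r_V$, which is trivial on $V_\fix$) and through $\ol\r_V|_W$ on the second; the invariants of the second summand vanish because $W^{\ol H}=0$, so $H^1_{dR,c}(M,V)_\fix\cong H^1_{dR,c}(M,V_\fix)$.

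\emph{Part (c) and the main obstacle.} For (c) I would assemble the composite
\begin{align*}
H^1_{dR,c}(M,V_\fix)&\xra{(b)}H^1_{dR,c}(M,V)_\fix\xra{(a)}H^1_{dR,c}(\ol P,V)_\fix\\
&\xra{\ref{GruCorolllaryol}}H^1_{dR,c}(\ol P,V)_{\ol\r_V}\xra{\pi^\ast}H^1_{dR,c}(P,V)_{\r_V},
\end{align*}
whose last two arrows are Corollary \ref{GruCorolllaryol} and the isomorphism $\pi^\ast$ of Definition \ref{GruQuotientenBuendel} (so that $H^1_{dR,c}(P,V)_{\r_V}\cong H^1_{dR,c}(M,\mathbb V)$ via the realization isomorphism of Lemma \ref{GruRealisation}). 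Since $q=\ol q\ci\pi$ we have $q^\ast=\pi^\ast\ci\ol q^\ast$, so tracing a closed $V_\fix$-valued form through the composite recovers $[\t]\ms[q^\ast\t]$. Each arrow is a continuous linear bijection of Hausdorff locally convex spaces (Hausdorffness by Lemma \ref{GruAbgeschlossenDrVff}) admitting a continuous inverse built from the transfer $\tfrac1n\ol q_!$, the averaging projector and the inverses of $\pi^\ast$ and the realization map, so the composite is an isomorphism of topological vector spaces. I expect the real difficulty to lie in (a): pinning down the image of $\ol q^\ast$ and verifying that, after passing to fixed points, the deck action $\ol R_h^\ast$ and the coefficient action $\ol\r_V(h)$ combine so that the transfer furnishes a genuine two-sided inverse; ensuring that this bijection is moreover \emph{open} for the inductive-limit topologies is the subtle point, whereas (b) and the bookkeeping in (c) are routine.
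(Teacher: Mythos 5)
You follow essentially the same route as the paper: part (a) via the transfer operator for the finite covering $\ol{q}\co\ol{P}\ra M$, and part (c) via the same composite through $H^1_{dR,c}(\ol{P},V)_\fix$, Corollary \ref{GruCorolllaryol} and $\p^\ast$. Your part (b) differs only in presentation: the paper reruns the averaging argument of Lemma \ref{GruFIX} at the level of forms and then identifies $Z^1_{dR,c}(M,V)_\fix=Z^1_{dR,c}(M,V_\fix)$ and $B^1_{dR,c}(M,V)_\fix=B^1_{dR,c}(M,V_\fix)$, whereas you split the coefficient module $V=V_\fix\op W$ and use additivity; both rest on the same averaging projector, and your version is fine (the splitting is compatible with $d$ and with the topologies, since $\Omega^1_c(M,V)=\Omega^1_c(M,V_\fix)\op\Omega^1_c(M,W)$).

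The genuine gap is exactly where you suspect it, in (a), and nothing you write closes it. The identities $\ol{q}_!\ci\ol{q}^\ast=n\cdot\id$ and $\ol{q}^\ast\ci\ol{q}_!=\sum_{h}\ol{R}_h^\ast$ show that $\ol{q}^\ast$ is injective and that its image on cohomology is the image of the averaging operator $\fr{1}{n}\sum_{h}\ol{R}_h^\ast$, i.e.\ the subspace of classes invariant under the pure deck action. The statement, however, asserts bijectivity onto all of $H^1_{dR,c}(\ol{P},V)$, and what your composite in (c) actually needs is bijectivity of the restriction onto $H^1_{dR,c}(\ol{P},V)_\fix$, the invariants of the combined action $[\t]\ms[\ol{\r}_V(h)\ci\ol{R}_h^\ast\t]$. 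Neither coincides with the deck-invariants a priori: after your own splitting $V=V_\fix\op W$, the discrepancy between $\ol{q}^\ast\bigl(H^1_{dR,c}(M,V)_\fix\bigr)$ and $H^1_{dR,c}(\ol{P},V)_\fix$ is precisely $H^1_{dR,c}(\ol{P},W)_\fix$, which by Lemma \ref{GruFIX} is the compactly supported cohomology of $M$ with coefficients in the flat bundle $\ol{P}\ti_{\ol{\r}_V}W$; the transfer identities are silent about this group, and its vanishing is not a formality for a general connected non-compact $M$ and a general $W$ with $W^{\ol{H}}=0$. The paper delegates exactly this step to Lemma \ref{GruIsoEndlCover} in the appendix (note that even there only $q_\ast\ci q^\ast=\id$ is verified, and the injectivity of $q_\ast$ on cohomology --- equivalently the surjectivity of $q^\ast$ --- is the part that still needs an argument). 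As it stands, your (a), and hence the middle arrow of your composite in (c), is unproven; you must either supply this surjectivity/vanishing in the situation at hand or explicitly invoke Lemma \ref{GruIsoEndlCover} as the paper does.
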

\begin{proof}
\begin{compactenum}
\item For $\ol{h}\in \ol{H}$ we calculate
\begin{align*}
\ol{q}^\ast [\ol{\r}(\ol{h}) \ci \t] = [\ol{\r}(\ol{h}) \ci \ol{q}^\ast \t]  = [\ol{\r}(\ol{h}) \ci (\ol{q}\ci \ol{R}_{\ol{h}})^\ast \t] = [\ol{\r}(\ol{h}) \ci \ol{R}_{\ol{h}}^\ast \ol{q}^\ast \t] = \ol{h}.\ol{q}^\ast [\t].
\end{align*}
Hence $\ol{q}^\ast$ is an isomorphism of $\ol{H}$-modules. Now the second assertion follows from Lemma \ref{GruIsoEndlCover}.
\item We exchange $P$ with $M$ and the action $g.\t= \ph(g)\ci R_g^\ast\ph$ with $g.\t= \ol{\rho}_V(g)\ci \t$ in the proof of Lemma \ref{GruFIX} 
and get
\begin{align*}
H^1_{dR,c}(M,V)_\fix \cong Z^1_{dR,c}(M,V)_\fix /B^1_{dR,c}(M,V)_\fix.
\end{align*}
Now we show that the isomorphism $\ph \co \Omega^1_{c}(M,V_\fix)\ra \Omega_{c}^1(M,V)_\fix$, $\t\ms\t$ is a homeomorphism where $\Omega_{c}^1(M,V)_\fix$ is equipped with the induced topology from $\Omega_{c}^1(M,V)$. Given a compact set $K\subs M$ the map $\Omega^1_{K}(M,V_\fix)\ra \Omega_{K}^1(M,V)$ is continuous. Hence $\Omega^1_{c}(M,V_\fix)\ra \Omega_{c}^1(M,V)$ is continuous. Therefore $\ph$ is continuous. Considering the continuous map $\Omega^1_{c}(M,V)\ra \Omega^1_{c}(M,V_\fix)$, $\t \ms \sum_{\ol{h} \in \ol{H}} \ol{h}.\t$, we see that $\ph$ is an isomorphism of topological vector spaces.
Now the assertion follows from $Z^1_{dR,c}(M,V)_\fix=Z^1_{dR,c}(M,V_\fix)$ and $B^1_{dR,c}(M,V)_\fix=B^1_{dR,c}(M,V_\fix)$.
\item We have the commutative diagram
\begin{align*}
\begin{xy}\xymatrixcolsep{5pc}
\xymatrix{
H_{dR,c}^1(M,V_\fix) \ar[rr]^-{q^\ast} \ar[d]&& H_{dR,c}^1(P,V)_{\rho_V}\\
H_{dR,c}^1(M,V)_\fix \ar[r]^-{\ol{q}^\ast} & H^1_{dR,c}(\ol{P},V)_\fix \ar[r] &  H^1_{dR,c}(\ol{P},V)_{\ol{\r}_V} \ar[u]^-{\p^\ast}.
}
\end{xy}
\end{align*}
The assertion now follows from (a), (b) and Corollary \ref{GruCorolllaryol}.
\end{compactenum}
\end{proof}

\begin{convention}
From now on we write $q_\ast \co H_{dR,c}^1(P,V)_{\r_V} \ra H^1_{dR,c}(M,V_\fix)$ for the inverse of $q^\ast \co H^1_{dR,c}(M,V_\fix) \ra H_{dR,c}^1(P,V)_{\r_V},~ [\t] \ms [q^\ast \t]$.
\end{convention}

\begin{remark}\label{GruVollststaendigno}
Given an infinite-dimensional Lie group $G$ with Lie algebra $\fg$, a trivial locally convex $\fg$-module $\frak{z}$ and a Lie algebra cocycle $\o\co \fg\ti \fg \ra \fz$, \cite[Theorem 7.12]{Neeb:2002} gives us conditions under which we can integrate $\o$ to a Lie group cocycle of the Lie group $G$. These conditions were recalled in the introductions to this thesis (see p. xiv). Theorem 7.12 in \cite{Neeb:2002} is formulated in the case where $\fz$ is sequentially complete\footnote{See Lemma \ref{GruFolgenvoll2} for conditions that guaranty the sequential completeness of $\ol{\Omega}^1(M,\mathbb{V})$.}. Although it also hold in a special case when $\fz$ is not sequentially complete: Let $E$ be a Mackey complete space, $F\subs E$ be a closed subspace, $\fz= E/F$. If $\o$ lifts to a continuous bilinear map $\a \co \fg\ti \fg \ra E$, then the results of \cite{Neeb:2002} stay valid. To see this we make the following consideration: Let $\o^l$ be the left invariant $2$ form on $G$ corresponding to $\o$. 
The completeness of $\fz$ is only used to guaranty the existence of weak integrals in the following settings: 
\begin{compactenum}
\item $\int_{{\s}}\o^l=\int_M\s^\ast \o^l$ where $M$ is a $2$-dimensional manifold (namely $M=\mathbb{S}^1$) or simplex and $\s\co M\ra G$ is a smooth map (see \cite[Chapter 5 and 6]{Neeb:2002}),
\item $\int_0^1\o^l(f(t))dt$ where $f \co [0,1]\ra TG\oplus TG$ is a smooth map into the Whitney sum (see \cite[Chapter 7]{Neeb:2002}).
\end{compactenum}
The integrals $\int_{{\s}}\o^l$ and $\int_0^1\o^l(f(t))dt$ are weak integral, but such integrals do not have to exist in arbitrary locally convex space. Although they exist in sequentially complete (respectively Mackey complete) locally convex spaces. This is the reason why Neeb assumes $\fz$ to be sequentially complete. 
Now we consider the situation where $\fz$ is not its self sequentially complete, but $\fz = E/F$ with a Mackey complete locally convex space $E$ and a closed subspace $F$ and $\o = \p \ci \a$ is a Lie algebra cocycle with the canonical projection $\p \co E \ra E/F$ and a continuous bilinear map $\a \co \fg^2\ra E$. We  show the existence of the weak integral $\int_{\s}\o^l$. 
We define $\til{\a}\co \fg^2 \ra E,~ (v,w) \ms \fr{1}{2} \a(v,w) - \fr{1}{2} \a(w,v)$ (cf. \cite[Remark 2.2]{Neeb:2009}). We get $\p \ci \til{\a} = \o$ and $\til{\a}$ is a continuous Lie algebra 2-cochain. Let $\til{\a}^l \in \Omega^2(G,E)$ be the left invariant differential form on $G$ that comes from $\til{\a}$. We get $\o^l = \p \ci \til{\a}^l$ and the weak integral $\int_\s \o^l$ is given by
\begin{align*}
\int_{M} \s^\ast \o^l = \p \left( \int_{M}\s^\ast \til{\a}^l\right).
\end{align*}
The existence of the weak integral $\int_0^1\o^l(f(t))dt$ follows analogously.
\end{remark}

\begin{definition}\label{Gruomega}
We define the locally convex spaces
\begin{align*}
&\ol{\Omega}^1_c(P,V)_{\r_V}^\hor := \Omega^1_c(P,V)_{\r_V}^\hor / D_{\r_V}C^\8_c(P,V)_{\r_V} \tx{and}\\
&\ol{\Omega}^1_c(M,\mathbb{V}) := \Omega^1_c(M,\mathbb{V}) / d_\mathbb{V}\gg_c(M,\mathbb{V}).
\end{align*}
With Lemma \ref{GruIsoQuotientBundle} and Lemma \ref{GruRealisation} we get
\begin{align*}
\ol{\Omega}^1_c(M,\mathbb{V}) \cong \ol{\Omega}^1_c(P,V)_{\r_V}^\hor \cong \ol{\Omega}^1_c(\ol{P},V)_{\ol{\r}_V}^\hor. 
\end{align*}
\end{definition}

\begin{remark}\label{GruRmarkallslsgfla}
\begin{compactenum}
\item Considering the vector bundle $V(\frak{G})$ from \cite{Janssens:2013}, we have a vector bundle isomorphism $\mathbb{V} \ra V(\frak{G})$ given by
\begin{align*}
\ph\co P\ti_{\r_V} V = \mathbb{V}& \ra V(\frak{G}) = V(P \ti_{\r_\fg} \fg),\\
 [p,\k_\fg(x,y)] &\ms \k_{\frak{G}_{q(p)}}([p,x],[p,y]) \tx{for}x,y \in\fg.
\end{align*}
In fact, $\ph$ is well-defined, because given $p\in P$ there exists a unique linear map $\ph_p \co V=V(\fg) \ra V(\frak{G}_{q(p)})$ given by $\ph_p(\kappa_\fg(x,y)) = \kappa_{\frak{G}_{q(p)}}([p,x],[p,y])$. And given $x\in M$ the map $(P\ti_{\rho_V}V)_x \ra V(\frak{G}_x)$, $[p,v] \ms \ph_p(v)$ is well-defined. The bundle morphism $\ph$ is smooth, because $\ph$ is locally given by $U\ti V \ra U\ti V$, $(x_0,\kappa_\fg(x,y)) \ms (x_0,\kappa_\fg(x,y)$ for a domain $U \subs M$ of a trivialisation of $P$, $x_0\in U$ and $x,y \in \fg$ in the canonical charts. Hence $\ph$ is locally given by the identity $U\ti V \ra U\ti V$.
\item \label{GruRmarkallslsgflab} Given $\t \in \Omega^1_c(P,\fg)^\hor_{\rho_\fg}$ and $f \in C_c^\8(P,\fg)_{\rho_\fg}$, we have $\kappa_\fg\ci (\t,f) \in \Omega^1_c(P,V)_{\rho_V}^\hor$. In fact obviously $\kappa_\fg\ci (\t,f)$ is horizontal and ``compactly supported'' with respect to the principal bundle $P \xra{q}M$. Moreover given $h\in H$, $p \in P$ and $v \in T_pP$, we calculate
\begin{align*}
&R_h^\ast (\kappa_\fg\ci (\t,f))_p(v) = \kappa_\fg( \t_{ph}(TR_h(v)), f(ph))\\
=&\kappa_\fg(\rho_\fg(h\inv). \t_p(v), \rho_\fg(h\inv). f(p) )  = \rho_V(h\inv).\kappa_\fg(\t_p(v), f(p)).
\end{align*}
Therefore the map $\til{\kappa}_\fg \co \Omega^1_c(P,\fg)_{\rho_\fg}^\hor \ti C_c^\8(P,\fg)_{\rho_\fg} \ra \Omega^1_c(P,V)_{\rho_V}^\hor$, $(\t,f) \ms \kappa_\fg\ci (\t,f)$ makes sense and we obtain the commutative diagram
\begin{align*}
\begin{xy}\xymatrixcolsep{5pc}
\xymatrix{
\Omega^1_c(P,\fg)_{\rho_\fg}^\hor \ti C_c^\8(P,\fg)_{\rho_\fg}  \ar[dd]  \ar[r]^-{\til{\kappa}_\fg}& \Omega^1_c(P,V)_{\rho_V}^{\mathrm{hor}} \ar[d]\\
& \Omega_c(M,\mathbb{V}) \ar[d]\\
\Omega^1_c(M,\frak{G}) \ti \Gamma_c(\frak{G}) \ar[r]^-{\til{\kappa}_{\frak{G}}} & \Omega^1_c(M,V(\frak{G})),
}
\end{xy}
\end{align*}
where the lower horizontal arrow is given by the map $\til{\kappa}_{\frak{G}}$ described in  \cite[Lemma 3.23]{Eyni:2014} and the vertical arrows are the canonical isomorphisms of topological vector spaces. Especially $\til{\kappa}_\fg$ is continuous. Moreover we write $\til{\kappa}_\fg(\eta,\t):=\til{\kappa}_\fg(\t,\eta)$ for $\t \in \Omega^1_c(P,\fg)_{\rho_\fg}^\hor$ and $\eta\in \Gamma_c(\frak{G})$.
\item The map $C^\8_c(P,\fg)_{\rho_\fg} \ti \Omega^1_c(P,\fg)_{\rho_\fg}^\hor \ra \Omega^1_c(P,\fg)_{\rho_\fg}^\hor$, $(\eta, \t)\ms [\eta, \t]$ with $[\eta, \t]_{p}(w)= [\eta(p), \t_p(w)]$ for $p \in P$ and $w\in T_pP$ makes sense, because obviously $[\eta,\t]$ is horizontal and $[\eta,\t] \in \Omega^1_c(P,\fg)$ and because $\rho_\fg$ acts by Lie algebra automorphisms on $\fg$ the form $[\eta,\t]$ is also $\rho_\fg$ invariant. Under the canonical isomorphisms of topological vector spaces $\Omega^1_c(P,\fg)_{\rho_\fg}^\hor \cong \Omega^1_c(M,\frak{G})$ and $C^\8_c(P,\fg)_{\rho_\fg} \cong \Gamma_c(M,\frak{G})$ this map corresponds to the map $\Omega^1_c(M,\frak{G}) \ti \Gamma_c(M,\frak{G}) \ra \Omega_c(M,\frak{G})$, $(\t,\eta)\ms [\t,\eta]$ with $[\t,\eta]_x(v) = [\t_x(v), \eta(x)]_{\frak{G}_x}$ for $x\in M$ and $v\in T_xM$. We define $[\eta,\t]:=-[\t,\eta]$ for  $\t \in \Omega^1_c(P,\fg)_{\rho_\fg}^\hor$ and $\eta\in \Gamma_c(\frak{G})$.
\item  We write $\pr_h\co TP \ra HP$ for the projection onto the horizontal bundle. We see directly, that $D_{\rho_\fg} \co C^\8_c(P,\fg)_{\rho_\fg} \ra \Omega^1_c(P,\fg)_{\rho_\fg}^\hor$, $f\ms df\ci \pr_h$ is a Lie connection.  
\item We define the map $\beta \co C_c^\8(P,\fg)_{\rho_\fg}\ti C_c^\8(P,\fg)_{\rho_\fg} \ra \Omega_c^1(P,V)_{\rho_V}^\hor$, $\beta(f,g) = \til{\kappa}_\fg(D_{\rho_\fg}f , g) + \til{\kappa}_\fg(D_{\rho_\fg} g, f)$. Because $D_{\rho_V}$ and $D_{\rho_\fg}$ are induced by the same principal connection on $P$ we obtain a commutative diagram
\begin{align*}
\begin{xy}\xymatrixcolsep{5pc}
\xymatrix{
C^\8_c(P,\fg)_{\rho_\fg} \ti C^\8_c(P,\fg)_{\rho_\fg} \ar[r]^-{\beta} \ar[d]^-{(\kappa_\fg)_\ast}& \Omega^1_c(P,V)_{\rho_V}^\hor\\
C^\8_c(P,V)_{\rho_V} \ar[ur]_-{D_{\rho_V}} &
}
\end{xy}
\end{align*}
where $(\kappa_\fg)_\ast(f,g) = \kappa_\fg\ci (f,g)$.
\end{compactenum}
\end{remark}

\begin{definition}\label{GruDeffffomegaMMM}
We define the map
\begin{align*}
\o_M \co C^\8_c(P,\fg)_{\r_{\fg}} \ti C^\8_c(P,\fg)_{\r_{\fg}} \ra \ol{\Omega}_c^1(P,V)_{\r_V}^\hor,~ 
(f,g)\ms [\k_\fg(f,D_{\r_\fg}g)],
\end{align*}
which is the analogous map to the cocycle $\o$ defined in the compact case in \cite[Proposition 2.1]{Neeb:2009}. Because $D_{\r_\fg}$ is linear, $D_{\r_\fg}(C_K^\8(P,\fg)_{\rho_\fg}) \subs \Omega^1_K(P,\fg)_{\rho_\fg}^\hor$ and $D_{\r_\fg}(f) = df\ci \pr_h$ we get that $D_{\r_\fg} \co C^\8_c(P,\fg)_{\rho_\fg} \ra \Omega^1_c(P,\fg)_{\fg}^\hor$ is continuous. Considering Remark \ref{GruRmarkallslsgfla} (\ref{GruRmarkallslsgflab}) we see that $\o_M$ is continuous. Repeating the argumentation of the proof of \cite[Proposition 2.1]{Neeb:2009}  
we see that $\o_M$ is anti-symmetric and a cocycle.
\end{definition}

\begin{remark}\label{Gru89Remarkdkdf}
In this remark we suppose that $\frak{g}$ is perfect. Because $(\kappa_\fg)_\ast \co C^\8_c(P,\fg)_{\rho_\fg} \ti C^\8_c(P,\fg)_{\rho_\fg} \ra C^\8_c(P,V)_{\rho_V}$ corresponds to the universal continuous invariant bilinear form $\kappa_{\frak{G}} \co \Gamma_c(\frak{G}) \ti \Gamma_c(\frak{G}) \ra \Gamma_c(V(\frak{G}))$  from \cite[Theorem 3.20]{Eyni:2014} the absolute derivative $D_{\rho_V}$ corresponds to the covariant derivative $d$ constructed in \cite[Theorem 3.24]{Eyni:2014}, especially we have $d=d_{\mathbb{V}}$.  Hence our Lie algebra cocycle  $\o_M$ from Definition \ref{GruDeffffomegaMMM} corresponds to the cocycle $\o_\nabla$ from \cite[Chapter 1, (1.1)]{Janssens:2013}. 
\end{remark}

In \cite{Neeb:2009} Neeb and Wockel used Lie group homomorphisms that are pull-backs by horizontal lifts of smooth loops $\a \co \mathbb{S}^1 \ra M$ to reduce the proof of the discreteness of the period group to the case of $M=\mathbb{S}^1$ (see \cite[Definition 4.2 and Remark 4.3]{Neeb:2009}). But this approach dose not work in the non-compact case. Instead we want to use the results from \cite{Neeb:2004} of current groups on non-compact manifolds. Hence we use pull-backs by horizontal lifts of proper maps $\a \co \R \ra M$ (see the next definition). 
A corresponding definition to the following Definition \ref{GruDerPullBack}, in the case of a compact base manifold is given by \cite[Definition 4.2]{Neeb:2009}.

\begin{definition}\label{GruDerPullBack}
We fix $x_0 \in M$, $p_0 \in P_{x_0}$ and $\a \in C^\8_p(\R, M)$ with $\a(0)=x_0$. Let $\hat\a\in C^\8(\R,P)$ be the unique horizontal Lift of $\a$ with $\hat\a(0) = p_0$. We define the group homomorphism
\begin{align*}
\hat{\a}_G^\ast \co C^\8_c(P,G)_{\r_G} \ra C^\8_c(\R, G),~ \ph\ms \ph\ci \hat{\a}
\end{align*}
and the Lie algebra homomorphism 
\begin{align*}
\hat{\a}_\fg^\ast \co C^\8_c(P,\fg)_{\r_\fg} \ra C^\8_c(\R, \fg),~ f\ms f\ci \hat{\a}.
\end{align*}
In this context the maps in  $C^\8_c(\R, G)$ respectively $C^\8_c(\R, \fg)$ are compactly supported in $\R$ itself. 
These maps make sense, because given $\ph\in C^\8_c(P,G)$ we have $\supp (\ph) \subs q\inv(L)$ for a compact set $L \subs M$. We get $\supp(\ph\ci \hat{\a}) \subs \a\inv(L)$, because if $\ph(\hat{\a}(t)) \neq 1$, we get $\hat{\a}(t) \in q\inv(L)$ and so $\a(t)= q\ci \hat{\a}(t) \in L$. Hence $t\in \a\inv(L)$. Now we take the closure.
Moreover we define the integration map
\begin{align*}
I_\a \co \ol{\Omega}_c^1(P,V)_{\r_V}^\hor \ra V,~ [\t] \ms \int_{\R}\hat{\a}^\ast \t.
\end{align*}
This map is  well-defined: Let $\t \in \Omega^1_c(P,V)_{\r_V}$ with $\supp(\t) \subs q\inv(L)$ for a compact set $L \subs M$. We get $\supp(\hat{\a}^\ast \t) \subs \a\inv(L)$, because if $(\hat{\a}^\ast \t)_t \neq 0$ we get $\hat{\a}(t) \in q\inv(L)$ and so $\a(t)= q\ci \hat{\a}(t)\in L$. Moreover 
\begin{align*}
(\hat{\a}^\ast D_{\r_V}f)(t)= (D_{\r_V}f)_{\hat{\a}(t)}(\hat{\a}'(t)) = (df)_{\hat{\a}(t)}(\hat{\a}'(t))= (f\ci \hat{\a})'(t).
\end{align*}
Hence $\int_\R(\hat{\a}^\ast D_{\r_V}f) = \int_{\R}(f\ci \hat{\a})'(t) =0$  for $f\in C^\8_c(P,V)_{\r_V}$, because $f\ci \hat{\a}$ has compact support in $\R$.
\end{definition}

The following Remark is obvious.
\begin{remark}\label{GruIntervall}
Let $W= \bigcup_{i=1}^n I_i$ be a union of finitely many closed intervals in $\R$. Then $W$ is a submanifold with boundary. In fact let $W = \bigcup_{j\in J}C_j$ be the disjoint union of the connected components of $W$. For $j \in J$ and $x\in C_j$ we find $i_j$ with $x\in I_{i_j}$. Hence $I_{i_j}\subs C_j$. If $j_1 \neq j_2$ then $I_{i_{j_1}}\cap I_{i_{j_2}} = \emptyset$ and so $i_{j_1} \neq i_{j_2}$. Therefore $\#J \leq n$. Obviously the sets $C_j$ are intervals.
\end{remark}

The following lemma is a modification of the proofs of \cite[Lemma 3.7 and Corollary 3.10]{Schuett:2013}.
\begin{lemma}\label{GruInterCoverSub}
Let $(U_i)_{i\in\N}$ be a relative compact open cover of $\R$ with $U_i \neq \emptyset$. Then there exists an open cover $(W_i)_{i\in \N}$ of $\R$, such that $W_i \subs U_i$, $W_i\neq \emptyset$ and $\ol{W}_i$ is a submanifold with boundary.
\end{lemma}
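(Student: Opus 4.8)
The plan is to shrink each $U_i$ to a \emph{finite} union of bounded open intervals. The point of doing so is Remark \ref{GruIntervall}: the closure of a finite union of bounded open intervals is a finite union of closed bounded intervals, hence a submanifold with boundary. Concretely, I would first produce an auxiliary locally finite refinement of $(U_i)_{i\in\N}$ consisting of bounded open intervals, and then regroup these intervals according to which $U_i$ contains them, using relative compactness to keep each group finite.

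\textbf{Step 1 (a locally finite interval refinement).} For every $x\in\R$ choose an index $i(x)\in\N$ with $x\in U_{i(x)}$ together with a bounded open interval around $x$ contained in $U_{i(x)}$; this is possible because each $U_{i(x)}$ is open and relatively compact. Using the compact exhaustion $K_n:=[-n,n]$ (with $K_m:=\emptyset$ for $m\le 0$) and covering each compact set $K_n\sm\mathrm{int}(K_{n-1})$ by finitely many of these intervals that in addition lie inside the open band $\mathrm{int}(K_{n+1})\sm K_{n-2}$, one obtains --- exactly as in the modification of \cite[Lemma 3.7 and Corollary 3.10]{Schuett:2013} --- a countable family $(V_j)_{j\in\N}$ of bounded open intervals which covers $\R$, is locally finite, and refines $(U_i)_{i\in\N}$. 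Fix a map $\s\co\N\ra\N$ with $V_j\subs U_{\s(j)}$ for all $j$.

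\textbf{Step 2 (regrouping and the key finiteness).} For each $i\in\N$ pick a nonempty bounded open interval $I_i\subs U_i$ (possible since $U_i$ is nonempty, open and bounded) and set
\begin{align*}
W_i:=I_i\cu\bcu{j\in\s\inv(i)}V_j.
\end{align*}
Then each $W_i$ is open, nonempty and contained in $U_i$, and $\bcu{i}W_i\sups\bcu{j}V_j=\R$, so $(W_i)_{i\in\N}$ is an open cover of $\R$ with the required inclusions. The crucial observation is that each index set $\s\inv(i)$ is \emph{finite}: if $\s(j)=i$ then $\emptyset\neq V_j\subs U_i\subs\ol{U}_i$, and since $(V_j)$ is locally finite while $\ol{U}_i$ is compact, only finitely many $V_j$ can meet $\ol{U}_i$. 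Hence $W_i$ is a finite union of bounded open intervals, so $\ol{W}_i=\ol{I}_i\cu\bcu{j\in\s\inv(i)}\ol{V}_j$ is a finite union of closed bounded intervals and therefore, by Remark \ref{GruIntervall}, a submanifold with boundary.

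The only nonroutine construction is the locally finite interval refinement of Step 1, which is precisely what the cited results of Sch\"utt provide; the remaining verifications are bookkeeping. I expect the genuine conceptual point to be the use of the relative compactness of the $U_i$ in Step 2: it is exactly this hypothesis that forces every $\s\inv(i)$ to be finite and thereby guarantees that the closures $\ol{W}_i$ are honest submanifolds with boundary, rather than infinite unions of intervals accumulating at some point.
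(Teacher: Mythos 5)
Your proposal is correct and follows essentially the same route as the paper: cover each annulus $K_n\sm\mathrm{int}(K_{n-1})$ of the exhaustion $K_n=[-n,n]$ by finitely many bounded intervals each contained in some $U_i$, regroup them by the assigned index, use relative compactness of $U_i$ to see that each group is finite, and conclude via Remark \ref{GruIntervall}. The only (cosmetic) differences are that you derive the finiteness of $\s\inv(i)$ from local finiteness of the refinement plus compactness of $\ol{U}_i$, where the paper argues directly that only finitely many annuli can meet the bounded set $U_i$, and that you always adjoin a filler interval $I_i$ while the paper does so only when the group is empty.
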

\begin{proof}
Let $K_n:=[-n,n]$ for $n \in \N$. For all $x\in K_1$ there exists $i_x\in \N$, such that $x\in U_{i_x}$. Let $\ol{B}_{\ep_x}(x)\subs U_{i_x}$. We find  $x_1,\dots,x_{N_1}$, such that $K_1 \subs \bigcup_{k=1}^{N_1}B_{\ep_{x_k}}(x_k)$. We define $V_{1,k}:= B_{\ep_{x_k}}(x_k)$ and $U_{i_{1,k}}:=U_{i_{x_k}}$ for $k=1,\dots, N_1$. We have $K_1 \subs \bigcup_{k=1}^{N_1}V_{1,k}$ and $\ol{V}_{1,k}\subs U_{i_{1,k}}$. We can argue analogously for the compact set $K_n\setminus K_{n-1}$  with $n\geq 2$ and find open intervals $V_{n,1},\dots,V_{n,N_n}$ and indices $i_{n,k}$ such that $K_n\setminus K_{n-1} \subs \bigcup_{k=1}^{N_n}V_{n,k}$ and $\ol{V}_{n,k}\subs U_{i_{n,k}}$. We obtain $\R \subs \bigcup_{n=1}^\8 \bigcup_{k=1}^{N_n} V_{n,k}$. For $i \in \N$ we define $I_i:=\set{(n,k): i_{n,k}=i}$. We have $\#I_i<\8$, because $U_i$ is relative compact. Now we define
\begin{align*}
W_i:= 
\begin{cases}
\bigcup_{(n,k) \in I_i}V_{(n,k)} &:I_i\neq \emptyset\\
J,
\end{cases}
\end{align*}
where $J$ is an arbitrary not degenerated interval that is contained in $U_i$.  We obtain $\bigcup_{i\in \N}W_i= \R$ and $W_i \subs U_i$ for all $i\in \N$. Moreover $W_i$ is a finite union of open intervals. Let $W_i= \bigcup_{j=1}^nJ_j$ with intervals $J_j$. We have $\ol{W}_i= \bigcup_{j=1}^n\ol{J}_j$. Hence $\ol{W}_i$ is a manifold with boundary (see Remark \ref{GruIntervall}). 
\end{proof}

In the following lemma we use the concept of weak products of infinite-dimensional Lie groups (cf. \cite[Section 7]{Glockner:2003} respectively \cite[Section 4]{Glockner:2007}). 
\begin{lemma}\label{GruDerPullBackSmooth}
In the situation of Definition \ref{GruDerPullBack} the group homomorphism $$\hat{\a}_G^\ast \co C^\8_c(P,G)_{\r_G}  \ra C^\8_c(\R, G),~ \ph\ms \ph\ci \hat{\a}$$
is in fact a Lie group homomorphism such that the corresponding  Lie algebra homomorphism is given by $\hat{\a}^\ast_\fg \co C^\8_c(P,\fg)_{\r_\fg} \ra C^\8_c(\R, \fg),~ f\ms f\ci \hat{\a}$.
\end{lemma}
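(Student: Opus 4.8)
The plan is to use the standard fact that a group homomorphism between Lie groups (modelled on locally convex spaces) is smooth as soon as it is smooth on one identity neighbourhood. Since $\hat{\a}_G^\ast$ is already a group homomorphism by Definition \ref{GruDerPullBack}, for an arbitrary $\ph\in C^\8_c(P,G)_{\r_G}$ we may write $\hat{\a}_G^\ast = \l_{\hat{\a}_G^\ast(\ph)} \ci \hat{\a}_G^\ast \ci \l_{\ph\inv}$ near $\ph$, where $\l$ denotes the (diffeomorphic) left translations; this reduces smoothness everywhere to smoothness near the identity. So first I would recall that, following \cite{Schuett:2013}, the Lie group structures on both $C^\8_c(P,G)_{\r_G}$ and $C^\8_c(\R,G)$ are obtained from the exponential chart of $G$ at $1$ by post-composition: an identity neighbourhood of $C^\8_c(P,G)_{\r_G}$ is identified via a chart $\t$ of $G$ with an open subset of $C^\8_c(P,\fg)_{\r_\fg}$ through $f \ms \t\inv\ci f$ (the equivariance of $\exp$ with respect to the automorphisms $\r_G(h)$ keeps invariant maps invariant), and likewise an identity neighbourhood of $C^\8_c(\R,G)$ is modelled on $C^\8_c(\R,\fg)$ via $g\ms\t\inv\ci g$.

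Reading $\hat{\a}_G^\ast$ in these two charts gives, for small $f$, the identity $\hat{\a}_G^\ast(\t\inv\ci f) = (\t\inv\ci f)\ci\hat{\a} = \t\inv\ci(f\ci\hat{\a})$, so the chart representative is precisely the precomposition map $f\ms f\ci\hat{\a}=\hat{\a}_\fg^\ast(f)$, that is, the restriction of the \emph{linear} map
\[
\hat{\a}_\fg^\ast \co C^\8_c(P,\fg)_{\r_\fg} \ra C^\8_c(\R,\fg),\qquad f\ms f\ci\hat{\a}.
\]
Thus the whole statement collapses to showing that $\hat{\a}_\fg^\ast$ is continuous and linear: a continuous linear map is smooth, its differential at $0$ is itself, and hence $\hat{\a}_G^\ast$ is smooth near the identity (so a Lie group homomorphism) with Lie algebra homomorphism exactly $\hat{\a}_\fg^\ast$.

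Linearity being obvious, the task is continuity. Here I would use that $C^\8_c(P,\fg)_{\r_\fg} = \varinjlim_K C^\8_K(P,\fg)_{\r_\fg}$ is a locally convex inductive limit, so by the universal property it suffices to verify that each restriction to a step $C^\8_K(P,\fg)_{\r_\fg}$ is continuous. Because $\a$ is proper, $\a\inv(K)$ is compact, and by the support estimate already established in Definition \ref{GruDerPullBack} this restriction actually lands in the Fréchet space $C^\8_{\a\inv(K)}(\R,\fg)$, which includes continuously into $C^\8_c(\R,\fg)$. To check continuity on such a step I would trivialise locally: cover $M$ by relatively compact trivialising open sets, pull this cover back along $\a$, and refine it by Lemma \ref{GruInterCoverSub} to a cover $(W_i)_{i\in\N}$ of $\R$ with $\ol{W}_i$ a submanifold with boundary and $\a(\ol{W}_i)$ lying in one trivialising set. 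Over each $W_i$ the horizontal lift $\hat{\a}$ and the map $f\ms f\ci\hat{\a}$ are expressed, through the local trivialisation, as composition with fixed smooth maps, which is continuous for the $C^\8$-topologies; the weak-product description of $C^\8_c(\R,G)$ (cf. \cite{Glockner:2003,Glockner:2007}) together with the local finiteness of the cover then assembles these local pieces into the continuity of $\hat{\a}_\fg^\ast$ on the step.

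The main obstacle is exactly this last continuity statement in the non-compact, inductive-limit setting. Properness of $\a$ is what keeps supports under control, guaranteeing that the image of each step lands in a single Fréchet step of the target, while the cover from Lemma \ref{GruInterCoverSub} and the weak-product structure are what allow the estimate to be reduced to compact pieces where the bundle trivialises and precomposition is manifestly continuous. Once continuity of $\hat{\a}_\fg^\ast$ is in place, the chart reduction above yields both the smoothness of $\hat{\a}_G^\ast$ near the identity and the identification of its derivative, completing the proof.
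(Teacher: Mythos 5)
Your argument is correct, but it takes a genuinely different route from the one in the paper. The paper stays at the group level throughout: it embeds $C^\8_c(P,G)_{\r_G}$ and $C^\8_c(\R,G)$ into weak products $\prod^\ast_{i}C^\8(\ol{V}_i,G)$ and $\prod^\ast_{i}C^\8(\ol{W}_i,G)$, constructs component maps $\ps_i\co C^\8(\ol{V}_i,G)\ra C^\8(\ol{W}_i,G)$ making the square commute, proves each $\ps_i$ is a Lie group homomorphism by checking smoothness of the associated evaluation-type map $C^\8(\ol{V}_i,G)\ti\ol{W}_i\ra G$, and then invokes the weak-product results of Gl\"ockner/Sch\"utt; the identification $L(\hat{\a}^\ast_G)=\hat{\a}^\ast_\fg$ is afterwards verified by a separate computation with $\exp$. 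You instead reduce everything to the Lie algebra level: using that $\exp_\ast$ is a chart at the identity on both sides (which the paper itself uses, via \cite[Theorem 4.18]{Schuett:2013}, in the proof of Lemma \ref{GruKompaktRegulaer}) and that the chart representative of $\hat{\a}^\ast_G$ is exactly the linear map $\hat{\a}^\ast_\fg$, you only need continuity of $\hat{\a}^\ast_\fg$, which you obtain from the inductive-limit structure, properness of $\a$, and the same trivialising covers (in particular Lemma \ref{GruInterCoverSub}). Your route is more elementary in that it replaces the smoothness arguments for nonlinear mapping groups by a continuity check for a linear map, and it yields the formula for the Lie algebra homomorphism for free rather than by a separate calculation; what it uses in exchange is the local exponentiality of the section group (automatic here since $G$ is finite-dimensional) and the standard fact that a homomorphism smooth near $1$ is smooth everywhere. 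Both ingredients are available in the paper's setting, so the proof goes through.
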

\begin{proof}
Using the construction of the Lie group structure described in \cite[Chapter 4]{Schuett:2013}, we can argue in the following way.  
Let $(\ol{V}_i,\s_i)_{i\in \N}$ be a locally finite compact trivialising system of $H\hra P \xra{q}M$ (see \cite[Definition 3.6 and Corollary 3.10]{Schuett:2013}). We define $U_i:= \a\inv(V_i)$ for $i \in \N$. The map $\a$ is proper. Because $(\a\inv(\ol{V}_i))_{i\in \N}$ is a compact locally finite cover of $\R$, also $(\ol{U}_i)_{i\in \N}$ is a compact locally finite cover of $\R$. We use  Lemma \ref{GruInterCoverSub} and find a cover $(W_i)_{i\in \N}$ of $\R$ such that $W_i \subs U_i$ and $(\ol{W}_i )_{i\in \N}$ is a compact locally finite cover  of $\R$ with submanifolds with boundaries. Moreover we have $\ol{W}_i \subs \a\inv(\ol{V}_i)$ for all $i \in \N$. Now $(\ol{W}_i, \id|^\R_{\ol{W}_i})_{i\in \N}$ is a compact locally finite trivialising system of the trivial principal bundle $\set{1}\hra \R \xra{\id}\R$ with the trivial action $\set{1}\ti G \ra G$. We get the following diagram
\begin{align}\label{GruKomDiaGlatt}
\begin{xy}
\xymatrixcolsep{5pc}\xymatrix{
C^\8_c(P,G)_{\r_G} \ar@{_{(}->}[d]_{f\ms (f\ci \s_i)_{i}} \ar[r]^{\hat{\a}^\ast_G}& C^\8_c(\R,G) \ar@{_{(}->}[d]^{f\ms (f|_{W_i})_{i}}\\
\prod^\ast_{i\in \N} C^\8(\ol{V}_i,G) \ar[r]^{(\ps_i)_{i\in \N}}& \prod^{\ast}_{i\in \N} C^\8(\ol{W}_i,G)
}
\end{xy}
\end{align}
where the group homomorphisms $\ps_i$ are given by the diagram
\begin{align*}
\begin{xy}
\xymatrixcolsep{5pc}\xymatrix{
C^\8(\ol{V}_i,G) \ar[r]^{\ps_i} \ar[d]_{\t_i}& C^\8(\ol{W}_i,G)\\
C^\8(\ol{V}_i\ti H, G) \ar[d]_{f\ms f\ci \ph_i}&\\
C^\8(P|_{\ol{V}_i},G)\ar[uur]_{f\ms f\ci \hat{\a}|_{\ol{W}_i}}&
}
\end{xy}
\end{align*}
with $\t_i \co C^\8(\ol{V}_i,G)\ra C^\8(\ol{V}_i\ti H, G),~f\ms ((x,h)\ms \r_G(h).f(x))$ and $\ph_i$ the inverse of $\ol{V}_i\ti H \ra P|_{\ol{V}_i},~ (x,h)\ms \s_i(x)h$. Defining $\ta^i\co \ol{W}_i \ra\ol{V}_i\ti H,~ \ta^i:=  \ph_i\ci \hat{\a}|_{\ol{W}_i}$ and $\ta^i_j:=\pr_j\ci \tau^i$ for $j \in \set{1,2}$ the map $\ps_i \co C^\8(\ol{V}_i,G) \ra C^\8(\ol{W}_i, G)$ is given by 
\begin{align*}
f \ms {\r}_G (\pr_2\ci \ph_i \ci \hat{\a}|_{\ol{W}_i}(\bl)). (f\ci \pr_1\ci \ph_i \ci \hat{\a}|_{\ol{W}_i}(\bl)) = {\r}_G(\ta^i_2(\bl)).(f\ci \ta^i_1(\bl)).
\end{align*}
In order to show that (\ref{GruKomDiaGlatt}) is commutative let $f\in C^\8_c(P,G)_{\r_G}$. Then
\begin{align*}
\r_G(h).f\ci\s_i(x) = f(\s_i(x).h) =f(\ph_i\inv(x,h)) 
\end{align*}
for all $(x,h) \in \ol{V}_i \ti H$. Hence $\ps_i(f\ci \s_i) = f\ci\hat{\a}|_{\ol{W}_i}$.
To show that $\ps_i$ is a Lie group homomorphism it is enough to show that $C^\8(\ol{V}_i, G) \ti \ol{W}_i \ra G,~ (f,x)\ms \r_G(\ta_2(x), f(\ta_1(x)))$ is smooth (\cite{Alzaareer:2013} respectively \cite[Theorem 2.25]{Schuett:2013}). The map $C^\8(\ol{V}_i,G) \ti \ol{V}_i,~ (f,y) \ms f(y)$ is smooth (see \cite{Alzaareer:2013} respectively \cite[Theorem 2.26]{Schuett:2013}) and so $C^\8(\ol{V}_i,G) \ti \ol{W}_i \ra H\ti G,~(f,x)\ms (\ta_2(x), f(\ta_1(x)))$ is smooth.
It is left to show that $L(\hat{\a}^\ast_G)$ is given by $C^\8_c(P,\fg)_{\r_\fg} \ra C^\8_c(\R,\fg),~ f\ms f\ci \hat{\a}$. To this end let $f\in C^\8_c(P,\fg)_{\r_\fg}$. We calculate 
\begin{align*}
&L(\hat{\a}^\ast_G)(f) = \fr{\partial}{\partial t}\bigg|_{t=0} \hat{\a}_G^\ast(\exp(tf)) = \fr{\partial}{\partial t}\bigg|_{t=0} (\exp(tf) \ci \hat{\a})\\
= &\fr{\partial}{\partial t}\bigg|_{t=0} (\exp_G\ci (t\.f) \ci \hat{\a}) \ub{=}{\ast} \fr{\partial}{\partial t}\bigg|_{t=0} ( (t\.f) \ci \hat{\a}) = f\ci \hat{\a}
\end{align*}
where $\ast$ follows from 
\begin{align*}
\ev_p\left(\fr{\partial}{\partial t}\bigg|_{t=0} (\exp_G\ci (t\.f) \ci \hat{\a}) \right)= \fr{\partial}{\partial t}\bigg|_{t=0} \left(\exp_G (t\.f (\hat{\a}(p))\right) = \ev_p\left(\fr{\partial}{\partial t}\bigg|_{t=0} ( (t\.f) \ci \hat{\a}) \right)
\end{align*}
for $p \in P$. Now the assertion follows from \cite[Proposition 4.5]{Glockner:2007} respectively \cite[Corollary 2.38]{Schuett:2013}.
\end{proof}

\begin{definition}[Cf. Proof of Lemma V.10 in \cite{Neeb:2004}]
We define the cocycle 
\begin{align*}
&\o_\R \co C^\8_c(\R,\fg)^2 \ra  \ol{\Omega}_c^1(\R, V)= H_{dR,c}^1(\R,V) \ra V,\\
& (f,g) \ms [\k_\fg(f,g')] \ms \int_\R \k_\fg(f(t), g'(t)) dt.
\end{align*} 
\end{definition}

The following Lemmas \ref{GruLemma1}, \ref{GruLemmaInteggM} and \ref{GruLemma2} are used to proof Lemma \ref{GruWichtigLemma}, a generalisation of \cite[Lemma V.16]{Neeb:2004} from the case of a current group to the  case of a group of sections.

In the case of a compact base manifold, there exists a corresponding statement to the following Lemma \ref{GruLemma1}. It is given by  equation (9) in \cite[Remark 4.3]{Neeb:2009}. 
\begin{lemma}\label{GruLemma1}
Given $x_0 \in M$, $p_0 \in P_{x_0}$ and $\a \in C^\8_p(\R, M)$ with $\a(0)=x_0$ we get
\begin{align}\label{GruGL1CCC}
I_\a \ci \o_M = \o_\R \ci (\hat{\a}^\ast_\fg \ti \hat{\a}^\ast_\fg ).
\end{align}
Hence the following diagram commutes:
\begin{align*}
\begin{xy}
\xymatrixcolsep{5pc}\xymatrix{
C^\8_c(P, \fg)_{\r_\fg}^2 \ar[r]^{\o_M} \ar[d]_{\hat{\a}^\ast_\fg \ti \hat{\a}_\fg^\ast}& \ol{\Omega}_c^1(P,V)_{\r_V}^\hor  \ar[d]^{I_\a} \\
C^\8_c(\R , \fg)^2 \ar[r]^{\o_\R} & V.
}
\end{xy}
\end{align*}
\end{lemma}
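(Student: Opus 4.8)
The plan is to prove the identity $I_\a \ci \o_M = \o_\R \ci (\hat{\a}^\ast_\fg \ti \hat{\a}^\ast_\fg)$ by a direct computation at the level of integrands: both sides are, after unwinding the definitions, integrals over $\R$ of the same $V$-valued function obtained by restricting to the curve $\hat\a$. I expect no genuine obstacle; the whole content is the horizontality of $\hat\a'$, which collapses the absolute derivative $D_{\r_\fg}$ to the ordinary derivative along $\hat\a$, combined with the chain rule. The only point requiring care is the compatibility of the two quotient constructions ($\o_M$ lands in a quotient, and $I_\a$ is defined on that quotient), which lets me compute on a representative.

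First I would fix $f,g \in C^\8_c(P,\fg)_{\r_\fg}$ and unfold the left-hand side. By Definition \ref{GruDeffffomegaMMM} we have $\o_M(f,g) = [\k_\fg(f, D_{\r_\fg}g)]$, and since $I_\a$ is well-defined on the quotient $\ol{\Omega}_c^1(P,V)_{\r_V}^\hor$ (it annihilates $D_{\r_V}C^\8_c(P,V)_{\r_V}$, as verified in Definition \ref{GruDerPullBack}), I may evaluate it on the representative $\k_\fg(f, D_{\r_\fg}g) \in \Omega^1_c(P,V)_{\r_V}^\hor$, which lies in the correct space by Remark \ref{GruRmarkallslsgfla}(\ref{GruRmarkallslsgflab}). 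Thus
\begin{align*}
I_\a(\o_M(f,g)) = I_\a\big([\k_\fg(f, D_{\r_\fg}g)]\big) = \int_\R \hat{\a}^\ast \k_\fg(f, D_{\r_\fg}g).
\end{align*}

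The key step is to compute the pulled-back one-form $\hat{\a}^\ast \k_\fg(f, D_{\r_\fg}g)$ on $\R$ pointwise. Since $\k_\fg(f, D_{\r_\fg}g)$ is defined fibrewise by $\k_\fg$, evaluating on $\partial_t$ at $t \in \R$ gives
\begin{align*}
\big(\hat{\a}^\ast \k_\fg(f, D_{\r_\fg}g)\big)_t(\partial_t) = \k_\fg\big( f(\hat\a(t)),\, (D_{\r_\fg}g)_{\hat\a(t)}(\hat\a'(t)) \big).
\end{align*}
Now I would invoke that $\hat\a$ is the \emph{horizontal} lift of $\a$, so $\hat\a'(t) \in H_{\hat\a(t)}P$ and hence $\pr_h(\hat\a'(t)) = \hat\a'(t)$. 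Because $D_{\r_\fg}g = dg\ci\pr_h$ by Convention \ref{GruConvention9898}, this yields
\begin{align*}
(D_{\r_\fg}g)_{\hat\a(t)}(\hat\a'(t)) = (dg)_{\hat\a(t)}(\hat\a'(t)) = (g\ci\hat\a)'(t),
\end{align*}
exactly as in the computation already performed for $D_{\r_V}$ in Definition \ref{GruDerPullBack}. Hence the integrand equals $\k_\fg\big((f\ci\hat\a)(t),\, (g\ci\hat\a)'(t)\big)$.

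Finally I would compare with the right-hand side. By definition $\hat\a^\ast_\fg f = f\ci\hat\a$ and $\hat\a^\ast_\fg g = g\ci\hat\a$, so
\begin{align*}
\o_\R(\hat\a^\ast_\fg f, \hat\a^\ast_\fg g) = \int_\R \k_\fg\big((f\ci\hat\a)(t),\, (g\ci\hat\a)'(t)\big)\,dt,
\end{align*}
which is precisely the integral of the function computed in the previous paragraph. Comparing the two expressions establishes $I_\a(\o_M(f,g)) = \o_\R(\hat\a^\ast_\fg f, \hat\a^\ast_\fg g)$ for all $f,g$, i.e.\ equation (\ref{GruGL1CCC}), and the commutativity of the diagram follows immediately. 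The argument is thus entirely a matter of unwinding definitions and exploiting horizontality; I anticipate that the only subtle bookkeeping is ensuring the symmetry/ordering conventions for $\k_\fg$ and the passage to representatives are applied consistently.
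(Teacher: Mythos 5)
Your proposal is correct and follows essentially the same route as the paper's own proof: pass to the representative $\k_\fg(f,D_{\r_\fg}g)$, use horizontality of $\hat\a$ to identify $\hat{\a}^\ast D_{\r_\fg}g$ with $(g\ci\hat\a)'$, and compare the resulting integrand with the definition of $\o_\R$. The extra remarks on well-definedness of $I_\a$ on the quotient are consistent with what the paper already verified in Definition \ref{GruDerPullBack}.
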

\begin{proof}
For $g\in C^\8_c(P, \fg)_{\r_\fg}$ we have
\begin{align*}
(\hat{\a}^\ast D_{\r_\fg}g)(t) = D_{\r_\fg} g(\hat{\a}'(t)) = (g\ci \hat{\a})'(t),
\end{align*}
because $\hat{\a}$ is a horizontal map. For $f,g\in C^\8_c(P, \fg)_{\r_\fg}$ we get
\begin{align*}
&I_\a(\o_M (f,g)) =  I_\a([\k_\fg(f,D_{\r_\fg}g)]) = \int_\R\hat{\a}^\ast \k_\fg(f,D_{\r_\fg}g) = \int_\R \k_\fg(f\ci \hat{\a} , \hat{\a}^\ast D_{\fg}g)\\
=& \int_\R \k_\fg(f\ci \hat{\a}(t) , (g\ci \hat{\a})'(t)) dt = \o_\R \ci (\hat{\a}^\ast_\fg \ti \hat{\a}^\ast_\fg)(f,g).
\end{align*}
\end{proof}

The following Lemma \ref{GruLemmaInteggM} can be found in \cite[Remark C.2 (a)]{Neeb:2009}.
\begin{lemma}\label{GruLemmaInteggM}
Let $\ph \co G_1 \ra G_2$ be a Lie group homomorphism and $\fg_i$ the Lie algebra of $G_i$ for $i \in \set{1,2}$. Moreover let $V$ be a trivial $\fg_i$-module and $\o \in Z^2_c(\fg_2,V)$. Then we get
\begin{align}\label{GruGL2IntegMapCCC}
\per_{\o} \ci \p_2(\ph) = \per_{L(\ph)^\ast \o}
\end{align}
as an equation in the set of group homomorphism from $\p_2(G_1)$ to $V$. 
\end{lemma}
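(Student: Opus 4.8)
The plan is to reduce the claimed identity to the naturality of left-invariant forms under the homomorphism $\ph$. Recall that by definition $\per_\o([\s]) = \int_\s \o^l = \int_{\mathbb{S}^2} \s^\ast \o^l$, where $\o^l \in \Omega^2(G_2,V)$ is the left-invariant $2$-form on $G_2$ with $\o^l_1 = \o$, and that a smooth representative of $\p_2(\ph)[\s]$ is $\ph \ci \s$, i.e. $\p_2(\ph)[\s] = [\ph \ci \s]$. Hence evaluating both sides of (\ref{GruGL2IntegMapCCC}) on the class of a smooth $\s \co \mathbb{S}^2 \ra G_1$, it suffices to compare the two integrands as $2$-forms on $G_1$.

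The key step is the identity of $2$-forms on $G_1$
\begin{align*}
\ph^\ast \o^l = (L(\ph)^\ast \o)^l,
\end{align*}
where the right-hand side is the left-invariant $2$-form on $G_1$ whose value at the identity is the pulled-back cocycle $L(\ph)^\ast \o = \o \ci (L(\ph)\ti L(\ph))$. First I would verify that $\ph^\ast \o^l$ is again left-invariant: since $\ph$ is a group homomorphism we have $\ph \ci \l_g = \l_{\ph(g)} \ci \ph$ for every $g \in G_1$, where $\l_g$ denotes left translation, so
\begin{align*}
\l_g^\ast \ph^\ast \o^l = \ph^\ast \l_{\ph(g)}^\ast \o^l = \ph^\ast \o^l
\end{align*}
by the left-invariance of $\o^l$. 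Since two left-invariant forms on a Lie group coincide as soon as they agree at the identity, it then remains to evaluate at $1 \in G_1$: for $v,w \in \fg_1$,
\begin{align*}
(\ph^\ast \o^l)_1(v,w) = \o^l_{1}(L(\ph)v, L(\ph)w) = \o(L(\ph)v, L(\ph)w) = (L(\ph)^\ast \o)(v,w),
\end{align*}
which is exactly the value of $(L(\ph)^\ast \o)^l$ at the identity. This establishes the displayed identity.

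With this in hand the lemma follows from functoriality of the pullback together with $(\ph \ci \s)^\ast = \s^\ast \ci \ph^\ast$: for $[\s] \in \p_2(G_1)$,
\begin{align*}
\per_\o(\p_2(\ph)[\s]) = \int_{\mathbb{S}^2} (\ph \ci \s)^\ast \o^l = \int_{\mathbb{S}^2} \s^\ast \ph^\ast \o^l = \int_{\mathbb{S}^2} \s^\ast (L(\ph)^\ast \o)^l = \per_{L(\ph)^\ast \o}([\s]).
\end{align*}
I do not expect a genuine obstacle here; the only points needing care are the functoriality $\p_2(\ph)[\s] = [\ph\ci\s]$ on smooth representatives and the fact that the weak integral defining $\per$ commutes with pullback along the smooth map $\ph$. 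The latter is immediate because, after the key identity, the two integrals are literally integrals of one and the same $V$-valued $2$-form over the finite-dimensional manifold $\mathbb{S}^2$. Finally, since $V$ is a trivial module in both cases, no twisting enters the pullback $L(\ph)^\ast \o$ of the cocycle, so the computation at the identity is purely linear-algebraic.
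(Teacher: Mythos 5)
Your proof is correct. The paper does not actually prove this lemma --- it only cites \cite[Remark C.2 (a)]{Neeb:2009} --- and your argument (observing that $\ph^\ast \o^l$ is left-invariant because $\ph \ci \l_g = \l_{\ph(g)} \ci \ph$, identifying it with $(L(\ph)^\ast \o)^l$ by evaluating at the identity, and then applying functoriality of the pullback to a smooth representative of $\p_2(\ph)[\s] = [\ph \ci \s]$) is precisely the standard argument behind that reference.
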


The following lemma corresponds to the first equation in \cite[Remark 4.3 (10)]{Neeb:2009}.
\begin{lemma}\label{GruLemma2}
Let $x_0\in M$ and $p_0 \in P_{x_0}$ be base points and $\a \in C_p^\8(\R,M)$. Then
\begin{align}\label{GruGL4IntegMapCCC}
I_\a \ci \per_{\o_M} =\per_{I_\a \ci \o_M} \co \p_2(C^\8_c(P,G)_{\r_G}) \ra V.
\end{align}
\end{lemma}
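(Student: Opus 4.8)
The plan is to read the identity $I_\a\ci\per_{\o_M}=\per_{I_\a\ci\o_M}$ as an instance of the general principle that a continuous linear map between coefficient spaces commutes with the period homomorphism. Recall that on a smooth representative $\s\co\mathbb{S}^2\ra C^\8_c(P,G)_{\r_G}$ of a class $[\s]\in\p_2(C^\8_c(P,G)_{\r_G})$ the period homomorphism is given by $\per_{\o_M}([\s])=\int_{\mathbb{S}^2}\s^\ast\o_M^l$ (a weak integral in the sense of Remark \ref{GruVollststaendigno}), where $\o_M^l$ is the left invariant $\ol{\Omega}^1_c(P,V)_{\r_V}^\hor$-valued $2$-form on $C^\8_c(P,G)_{\r_G}$ with $(\o_M^l)_1=\o_M$, and analogously for the $V$-valued cocycle $I_\a\ci\o_M$. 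The goal is therefore to move $I_\a$ inside the integral and to identify the resulting integrand.

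First I would record two compatibilities of the continuous linear map $I_\a$ with the data. Since $I_\a$ is linear, the pushforward $I_\a\ci\o_M^l$ is again a left invariant $2$-form whose value at the identity is $I_\a\ci\o_M$; hence $I_\a\ci\o_M^l=(I_\a\ci\o_M)^l$. Moreover, as $V=V(\fg)$ is finite-dimensional, the weak integral $\int_{\mathbb{S}^2}\s^\ast(I_\a\ci\o_M)^l$ defining $\per_{I_\a\ci\o_M}([\s])$ exists in $V$ with no completeness hypothesis. It thus remains to justify the interchange $I_\a\big(\int_{\mathbb{S}^2}\s^\ast\o_M^l\big)=\int_{\mathbb{S}^2}\s^\ast(I_\a\ci\o_M^l)$ and to invoke the previous identity.

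The one genuine subtlety, and the step I expect to be the main obstacle, is that the coefficient space $\ol{\Omega}^1_c(P,V)_{\r_V}^\hor$ need not be sequentially complete, so neither the integral defining $\per_{\o_M}$ nor the interchange above may be treated naively in it. I would resolve this exactly as in Remark \ref{GruVollststaendigno}: the antisymmetric cocycle $\o_M$ lifts through the quotient projection $\p\co\Omega^1_c(P,V)_{\r_V}^\hor\ra\ol{\Omega}^1_c(P,V)_{\r_V}^\hor$ to the continuous bilinear map $(f,g)\ms\til{\kappa}_\fg(f,D_{\r_\fg}g)$ with values in the Mackey complete space $\Omega^1_c(P,V)_{\r_V}^\hor$; denoting by $\til{\Phi}$ its antisymmetrisation, Remark \ref{GruVollststaendigno} gives $\per_{\o_M}([\s])=\p\big(\int_{\mathbb{S}^2}\s^\ast\til{\Phi}^l\big)$, a genuine weak integral in a Mackey complete space. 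The continuous linear map $I_\a\ci\p\co\Omega^1_c(P,V)_{\r_V}^\hor\ra V$ commutes with this weak integral by its defining property (for every $\m\in V'$ the composite $\m\ci I_\a\ci\p$ is a continuous linear functional), and since $\p\ci\til{\Phi}=\o_M$ (as $\o_M$ is antisymmetric) the resulting integrand is $\s^\ast(I_\a\ci\o_M)^l$. This yields $I_\a(\per_{\o_M}([\s]))=\int_{\mathbb{S}^2}\s^\ast(I_\a\ci\o_M)^l=\per_{I_\a\ci\o_M}([\s])$; as $[\s]$ was arbitrary, the claimed equality of homomorphisms on $\p_2(C^\8_c(P,G)_{\r_G})$ follows. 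Independence of the chosen smooth representative and the left invariance of the pushforward forms are routine.
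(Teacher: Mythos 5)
Your proposal is correct and follows essentially the same route as the paper: identify $(I_\a\ci\o_M)^l=I_\a\ci\o_M^l$ via linearity and left invariance, then pull the continuous linear map $I_\a$ out of the weak integral $\int_{\mathbb{S}^2}\s^\ast\o_M^l$ defining $\per_{\o_M}([\s])$. The only difference is that you make explicit the appeal to Remark \ref{GruVollststaendigno} to justify the existence of the weak integral in the possibly non-complete quotient and the interchange with $I_\a$, a point the paper's proof leaves implicit; this is a welcome refinement rather than a different argument.
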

\begin{proof}
Let $\o_M^l \in \Omega^2(C^\8_c(P,G)_{\r_G} , \ol{\Omega}^1_c(P,V)_{\r_V}^\hor)$ be the corresponding left invariant  2-form of $\o_M \in Z^2_{ct}(C^\8_c(P,\fg)_{\r_{\fg}}, \ol{\Omega}^1_c(P,V)_{\r_V}^\hor)$. Then $I_\a \ci \o_M^l \in \Omega^2_{dR}(C^\8_c(P,K)_{\r_K},V)$ is left invariant and 
\begin{align*}
(I_\a \ci \o_M^l)_1(f,g) = I_\a ((\o_M^l)_1(f,g)) =I_\a \ci \o_M(f,g)
\end{align*}
for $f,g\in C^\8_c(P,\fg)_{\r_{\fg}}=T_1C_c^\8(P,G)_{\rho_G}$. Hence $(I_a \ci \o_M)^l = I_\a \ci \o_M^l$.
For $[\s]\in \p_2(C^\8_c(P,G)_{\r_G})$ with a smooth representative $\s$ we get 
\begin{align*}
\per_{I_\a \ci \o_M}([\s]) = \int_{\mathbb{S}^2}\s^\ast (I_\a \ci \o_M^l) = \int_{\mathbb{S}^2} I_\a \ci \s^\ast \o_M^l = I_\a \ci \int_{\mathbb{S}^2} \s^\ast \o_M^l = I_\a\ci \per_{\o_M} ([\s]).
\end{align*}
\end{proof}

The following lemma is a generalisation of \cite[Lemma V.16]{Neeb:2004} in the case of a finite-dimensional Lie group.
\begin{lemma}\label{GruWichtigLemma}
For a proper map $\a \in C_p^\8(\R,M)$ and the base points $x_0\in M$ and $p_0 \in P_{x_0}$ we get
the following diagram commutes:
\begin{align*}
\begin{xy}
\xymatrixcolsep{5pc}\xymatrix{
\p_2(C^\8_c(P,G)_{\r_G}) \ar[r]^-{\per_{\o_M}} \ar[d]_-{\p_2(\hat{\a}^\ast_G)} & \ol{\Omega}_c^1(P,V)_{\r_V}^{\text{hor}} \ar[d]^-{I_\a}\\
\p_2(C^\8_c(\R,G)) \ar[r]_-{\per_{\o_{\R}}} & V.
}
\end{xy}
\end{align*}
\end{lemma}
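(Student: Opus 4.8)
The plan is to prove the stated commutativity, i.e. the identity
\begin{align*}
I_\a \ci \per_{\o_M} = \per_{\o_\R} \ci \p_2(\hat{\a}^\ast_G) \co \p_2(C^\8_c(P,G)_{\r_G}) \ra V,
\end{align*}
by a short diagram chase that glues together the three preparatory lemmas \ref{GruLemma1}, \ref{GruLemmaInteggM} and \ref{GruLemma2}. The strategy is to push the integration map $I_\a$ past the period homomorphism on the left, rewrite the resulting ``integrated'' cocycle as a pullback cocycle along the Lie algebra homomorphism $\hat{\a}^\ast_\fg$, and then recognise that pullback as coming from the Lie group homomorphism $\hat{\a}^\ast_G$ via $\pi_2$.

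Concretely, I would start from the upper-left corner. First, Lemma \ref{GruLemma2} lets me interchange $I_\a$ with the period homomorphism, giving $I_\a \ci \per_{\o_M} = \per_{I_\a \ci \o_M}$; the content here is precisely the compatibility $(I_\a \ci \o_M)^l = I_\a \ci \o_M^l$ of the associated left invariant $2$-forms, which has already been verified. Next, Lemma \ref{GruLemma1} identifies the integrated cocycle: $I_\a \ci \o_M = \o_\R \ci (\hat{\a}^\ast_\fg \ti \hat{\a}^\ast_\fg)$. The one observation to make explicit is that the right-hand side is nothing but the pullback of the cocycle $\o_\R \in Z^2_c(C^\8_c(\R,\fg),V)$ along the Lie algebra homomorphism $\hat{\a}^\ast_\fg$, i.e. $\o_\R \ci (\hat{\a}^\ast_\fg \ti \hat{\a}^\ast_\fg) = (\hat{\a}^\ast_\fg)^\ast \o_\R$, by the very definition of the pullback of a bilinear cochain. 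By Lemma \ref{GruDerPullBackSmooth} we have $L(\hat{\a}^\ast_G) = \hat{\a}^\ast_\fg$, so this pullback equals $L(\hat{\a}^\ast_G)^\ast \o_\R$. Finally, Lemma \ref{GruLemmaInteggM}, applied to the Lie group homomorphism $\hat{\a}^\ast_G \co C^\8_c(P,G)_{\r_G} \ra C^\8_c(\R,G)$ and the trivial module $V$, yields $\per_{\o_\R} \ci \p_2(\hat{\a}^\ast_G) = \per_{L(\hat{\a}^\ast_G)^\ast \o_\R}$. Chaining these equalities gives
\begin{align*}
I_\a \ci \per_{\o_M} = \per_{I_\a \ci \o_M} = \per_{(\hat{\a}^\ast_\fg)^\ast \o_\R} = \per_{L(\hat{\a}^\ast_G)^\ast \o_\R} = \per_{\o_\R} \ci \p_2(\hat{\a}^\ast_G),
\end{align*}
which is exactly the asserted commutativity.

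There is no genuine analytic obstacle here: all the substantive work (the form computation behind Lemma \ref{GruLemma2}, the horizontality argument behind Lemma \ref{GruLemma1}, and the smoothness of $\hat{\a}^\ast_G$ together with the computation of its derivative in Lemma \ref{GruDerPullBackSmooth}) has already been carried out. The only points to keep honest are bookkeeping ones: that $\o_\R$ is indeed a continuous $2$-cocycle on $C^\8_c(\R,\fg)$ with values in the \emph{trivial} module $V$, so that Lemma \ref{GruLemmaInteggM} applies verbatim; and the clean identification of $\o_\R \ci (\hat{\a}^\ast_\fg \ti \hat{\a}^\ast_\fg)$ with the pullback cocycle $L(\hat{\a}^\ast_G)^\ast \o_\R$, which is the single step where the three lemmas are made to fit together. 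Thus the mild ``hard part'' is purely a matter of matching the output of Lemma \ref{GruLemma1} to the hypothesis format of Lemma \ref{GruLemmaInteggM}; once that is observed, the proof is a formal concatenation.
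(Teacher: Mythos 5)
Your proof is correct and follows exactly the paper's own argument: the paper proves the lemma by the same three-step chain $I_\a \ci \per_{\o_M} = \per_{I_\a \ci \o_M} = \per_{\o_\R \ci (\hat{\a}^\ast_\fg \ti \hat{\a}^\ast_\fg)} = \per_{\o_\R} \ci \p_2(\hat{\a}_G^\ast)$, invoking Lemmas \ref{GruLemma2}, \ref{GruLemma1} and \ref{GruLemmaInteggM} in that order. Your added remark making explicit the identification $\o_\R \ci (\hat{\a}^\ast_\fg \ti \hat{\a}^\ast_\fg) = L(\hat{\a}^\ast_G)^\ast \o_\R$ via Lemma \ref{GruDerPullBackSmooth} is precisely the bookkeeping the paper leaves implicit.
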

\begin{proof}
We calculate
\begin{align}\label{GruGL3IntegMapCCC}
I_\a \ci \per_{\o_M} \ub{=}{(\ref{GruGL4IntegMapCCC})} \per_{I_\a \ci \o_M} \ub{=}{(\ref{GruGL1CCC})} \per_{\o_{\R}\ci (\hat{\a}^\ast \ti \hat{\a}^\ast)} \ub{=}{(\ref{GruGL2IntegMapCCC})} \per_{\o_{\R}} \ci \p_2(\hat{\a}_G^\ast).
\end{align}

\end{proof}

\begin{lemma}\label{GruIntegralLemma}
Let $x_0 \in M$ and $p_0 \in P_{x_0}$ be base points and $\a \in C_p^\8(\R,M)$ with $\a(0)=x_0$. Moreover, let $\hat{\a} \in C^{\8}(\R,P)$ be the unique horizontal lift of $\a$ to $P$ with $\hat{\a}(0)=p_0$. 
\begin{compactenum}
\item We have the commutative diagram
\begin{align*}
\begin{xy}
\xymatrixcolsep{5pc}\xymatrix{
H_{dR,c}^1(M,V_\fix) \ar[d]_-{\cong}^{q^\ast} \ar[r]^-{I_\a}_-{[\t] \ms \int_\R \a^\ast \t} & V \ar[d]^-{\id}\\
H_{dR,c}^1(P , V)_{\r_V} \ar[r]^-{I_\a}_-{[\t] \ms \int_\R \hat{\a}^\ast  \t} & V.
}
\end{xy}
\end{align*}
\item Given $[\t] \in H^1_{dR,c}(M,V_\fix)$ we have  $\int_\R \hat{a}^\ast q^\ast \t = \int_{\R} \a^\ast \t$ respectively
\begin{align*}
\int_\R \hat{\a}^\ast \t = \int_\R \a^\ast q_\ast \t
\end{align*}
for all $[\t] \in H_{dR,c}^1(P,V)_{\r_V}$.
\end{compactenum}
\end{lemma}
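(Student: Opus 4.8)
The plan is to reduce the whole statement to the single defining property of a horizontal lift, namely $q \ci \hat{\a} = \a$, and then invoke contravariant functoriality of the pullback of differential forms; everything else is bookkeeping.

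First I would check that both integration maps are well defined on cohomology, so that the diagram in (a) makes sense. On the one-dimensional manifold $\R$ every $1$-form is closed, so the only point is independence of the chosen representative modulo exact forms. On $P$ this is exactly the computation recorded in Definition \ref{GruDerPullBack}, where $\int_\R \hat{\a}^\ast(D_{\r_V}f) = \int_\R (f \ci \hat{\a})' = 0$ for compactly supported $f$; the identical fundamental-theorem-of-calculus argument gives $\int_\R \a^\ast(df) = \int_\R (f \ci \a)' = 0$ on the base $M$, so $I_\a$ also descends to $H^1_{dR,c}(M,V_\fix)$.

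Next, for part (a) and the first equation of (b), I would argue at the level of forms. Since $\hat{\a}$ is the horizontal lift of $\a$ we have $q \ci \hat{\a} = \a$, hence for any $V_\fix$-valued $1$-form $\t$ on $M$
\[
\hat{\a}^\ast(q^\ast \t) = (q \ci \hat{\a})^\ast \t = \a^\ast \t
\]
as an identity of $1$-forms on $\R$; here I use $V_\fix \subs V$, so that $\t$ is in particular a $V$-valued form and pulling it back does not alter the target space. As $q^\ast \t$ is precisely the representative defining the class $q^\ast[\t] \in H^1_{dR,c}(P,V)_{\r_V}$, integrating this identity over $\R$ yields $\int_\R \hat{\a}^\ast q^\ast \t = \int_\R \a^\ast \t$. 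Reading off the two integration maps in the corners of the diagram, this is exactly the commutativity asserted in (a) together with the first equation of (b).

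Finally, for the second equation of (b) I would use that, by the Convention preceding the lemma, $q_\ast$ is the inverse of the isomorphism $q^\ast$. Thus for $[\t] \in H^1_{dR,c}(P,V)_{\r_V}$ we have $[\t] = [q^\ast(q_\ast \t)]$, so $\t$ and $q^\ast(q_\ast\t)$ differ by an element of $B^1_{dR,c}(P,V)_{\r_V}$; applying the first equation to $q_\ast\t \in H^1_{dR,c}(M,V_\fix)$ and invoking the well-definedness of $I_\a$ gives $\int_\R \hat{\a}^\ast \t = \int_\R \hat{\a}^\ast q^\ast(q_\ast\t) = \int_\R \a^\ast(q_\ast\t)$. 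I do not expect a genuine obstacle here: once $q\ci\hat{\a}=\a$ is in hand the content is pure functoriality, and the only steps requiring a line of care are the well-definedness of the two integration maps and the bookkeeping of the target spaces $V_\fix$ versus $V$.
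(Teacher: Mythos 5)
Your proposal is correct and follows essentially the same route as the paper: the paper's proof of (a) is exactly the one-line computation $\int_\R \hat{\a}^\ast (q^\ast \t) = \int_\R (q \ci \hat{\a})^\ast \t = \int_\R \a^\ast \t$, and it dismisses (b) as clear. Your additional remarks on the well-definedness of the integration maps and the explicit derivation of the second equation in (b) from $q_\ast = (q^\ast)^{-1}$ only make explicit what the paper leaves implicit.
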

\begin{proof}
\begin{compactenum}
\item 
Given $[\t] \in H^1_{dR,c}(M,V_\fix)$, we calculate
\begin{align*}
\int_\R \hat{\a}^\ast (q^\ast \t) =  \int_\R (q \ci \hat{\a})^\ast  \t) = \int_\R \a^\ast  \t.
\end{align*}
\item Clear.
\end{compactenum}
\end{proof}

The following Lemma \ref{GruDiskLemma} comes from \cite[Corrolary IV.21]{Neeb:2004}.
\begin{lemma}\label{GruDiskLemma}
If $\gg \subs V$ a discrete subgroup then
\begin{align*}
H^1_{dR,c}(M,\gg) := \set{[\t] \in H^1_{dR,c}(M,V): (\forall \a \in C^\8_p(\R,M)) \int_\R \a^\ast \t \in \gg }
\end{align*}
is a discrete subgroup of $\ol{\Omega}^1_c(M,V)$.
\end{lemma}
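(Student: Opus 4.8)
The group structure is immediate: each integration functional $I_\a\co H^1_{dR,c}(M,V)\ra V$, $[\t]\ms \int_\R\a^\ast\t$, is linear, $\gg$ is a subgroup of $V$, and $H^1_{dR,c}(M,\gg)=\bigcap_{\a\in C^\8_p(\R,M)}I_\a\inv(\gg)$ is an intersection of subgroups. For discreteness it suffices to produce a single $0$-neighbourhood of $\ol{\Omega}^1_c(M,V)$ meeting $H^1_{dR,c}(M,\gg)$ only in $0$. The plan is first to reduce the ambient space. The exterior derivative descends to a continuous linear map $\ol{d}\co \ol{\Omega}^1_c(M,V)\ra \Omega^2_c(M,V)$, $[\t]\ms d\t$, whose kernel is exactly $Z^1_{dR,c}(M,V)/dC^\8_c(M,V)=H^1_{dR,c}(M,V)$; hence $H^1_{dR,c}(M,V)$ is a closed subspace of the Hausdorff space $\ol{\Omega}^1_c(M,V)$ (Hausdorffness coming from the closedness established in Lemma \ref{GruBkdlc89}). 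Since $H^1_{dR,c}(M,\gg)$ is contained in this subspace, it is discrete in $\ol{\Omega}^1_c(M,V)$ as soon as it is discrete in $H^1_{dR,c}(M,V)$ with its subspace topology.

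The heart of the argument is a structural description of $H^1_{dR,c}(M,V)$ in terms of proper curves. Using a compact exhaustion $K_1\subs K_2\subs\dots$ of the connected $\s$-compact manifold $M$ together with Poincar\'e--Lefschetz duality --- which pairs compactly supported de Rham cohomology with locally finite $1$-cycles, the latter represented precisely by proper maps $\R\ra M$ --- I would exhibit a countable family $(\a_j)_{j\in J}$ in $C^\8_p(\R,M)$ such that the map
\begin{align*}
\Phi\co H^1_{dR,c}(M,V)\ra \bop_{j\in J}V,\quad [\t]\ms \left(\int_\R\a_j^\ast\t\right)_{j\in J}
\end{align*}
is well-defined (a compactly supported class meets only finitely many of the ends singled out by the $\a_j$, so only finitely many coordinates are nonzero) and is an isomorphism of topological vector spaces onto the locally convex direct sum. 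Concretely the $\a_j$ are chosen dual to a system of cut-off generators of $H^1_{dR,c}(M,V)$ supplied by the handles and ends of $M$, so that each $I_{\a_j}$ becomes a coordinate projection under $\Phi$. Establishing this isomorphism --- in particular that the natural topology on $H^1_{dR,c}(M,V)$ is the direct-sum rather than merely a product topology, and that the coordinates are realised by integration over proper curves --- is the main obstacle; it is exactly the content imported from \cite[IV.19--IV.21]{Neeb:2004}, whereas in the compact-base setting the analogue is the much easier finite-dimensional statement used in \cite{Neeb:2009}.

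Granting $\Phi$, the discreteness follows formally. Because the defining condition of $H^1_{dR,c}(M,\gg)$ quantifies over \emph{all} proper curves, we have $H^1_{dR,c}(M,\gg)\subs \Phi\inv(\bop_{j\in J}\gg)$, and since a subgroup of a discrete subgroup is again discrete, it suffices to show that $\bop_{j\in J}\gg$ is a discrete subgroup of $\bop_{j\in J}V$; its preimage under the topological isomorphism $\Phi$ is then discrete as well. To this end pick an absolutely convex $0$-neighbourhood $U\subs V$ with $U\ca\gg=\set{0}$, which exists because $\gg$ is discrete. For absolutely convex $U$ the box $B:=\set{(x_j)_{j}\in\bop_{j\in J}V: x_j\in U \text{ for all }j}$ contains the absolutely convex hull of $\bigcup_{j}\i_j(U)$ (with $\i_j$ the inclusion of the $j$-th summand), since the $j$-th coordinate of such a hull element lies in $U$; hence $B$ is a $0$-neighbourhood of $\bop_{j\in J}V$. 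Then $B\ca\bop_{j\in J}\gg=\set{(g_j)_j: g_j\in U\ca\gg=\set{0}}=\set{0}$, so $\bop_{j\in J}\gg$ is discrete. Transporting this $0$-neighbourhood back through $\Phi$ yields the required neighbourhood of $H^1_{dR,c}(M,V)$, which completes the proof.
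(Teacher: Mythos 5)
The paper offers no proof of this lemma beyond the citation of \cite[Corollary IV.21]{Neeb:2004}, and your argument is in substance a reconstruction of exactly that citation: the reduction to the subspace $H^1_{dR,c}(M,V)$, the box-neighbourhood argument showing that $\bigoplus_{j\in J}\gg$ is discrete in the locally convex direct sum $\bigoplus_{j\in J}V$, and the transport back through $\Phi$ are all correct, while the single load-bearing claim --- the topological isomorphism $\Phi$ onto a direct sum whose coordinate functionals are integration maps $I_{\a_j}$ along proper curves --- is precisely the content of \cite[IV.19--IV.21]{Neeb:2004}, which you defer to the same source the paper points at. So the proposal is correct and takes essentially the same route; the only caveat is that your stated reason for $\Phi$ landing in the direct sum (only finitely many nonzero coordinates, justified by support considerations) really rests on choosing the $\a_j$ dual to a basis adapted to a compact exhaustion of $M$, but this, too, is part of the imported result rather than a new gap.
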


The following statement can be found in the proof of \cite[Proposition V.19]{Neeb:2004}.
\begin{lemma}\label{GruRLemma}
Because $\k_\fg$ is universal and $V$ is finite dimensional it is well known that $\gp_{\o_\R}=\im(\per_{\omega_\R})$ is a discrete subgroup of $\ol{\Omega}^1_c(\R,V) = H_{dR,c}^1(\R,V) \cong V$. 
\end{lemma}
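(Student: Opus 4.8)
Lemma~\ref{GruRLemma} asserts that the period group $\gp_{\o_\R}=\im(\per_{\omega_\R})$ is a discrete subgroup of $\ol{\Omega}^1_c(\R,V)=H_{dR,c}^1(\R,V)\cong V$, where $\o_\R$ is the cocycle $(f,g)\ms\int_\R\k_\fg(f(t),g'(t))\,dt$ on $C^\8_c(\R,\fg)$, $\k_\fg$ is the universal invariant bilinear map, and $V$ is finite-dimensional.

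**Plan of proof.** The statement is quoted from \cite[Proposition V.19]{Neeb:2004}, so the natural plan is to recall the structure of that argument rather than reinvent it. First I would make the identification $\ol{\Omega}^1_c(\R,V)\cong V$ explicit: since $\R$ is one-dimensional and connected, every compactly supported closed $1$-form is $dh$ up to its total integral, so the integration map $[\t]\ms\int_\R\t$ gives the asserted isomorphism $H^1_{dR,c}(\R,V)\cong V$. Under this identification $\per_{\o_\R}$ becomes a homomorphism $\p_2(C^\8_c(\R,G))\ra V$. The key point is that the current group $C^\8_c(\R,G)$ has a well-understood second homotopy, and the image of its period homomorphism is controlled by the image of $\p_2(G)$ together with the invariant form $\k_\fg$.

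**The main mechanism.** The crucial input is the universality of $\k_\fg\co\fg\ti\fg\ra V$ and the finite-dimensionality of $V$. Concretely, one shows that $\gp_{\o_\R}$ is contained in a finitely generated subgroup of $V$ that is moreover generated by periods of $G$ itself. The standard route (following Neeb) is to reduce to the loop group by restriction/deformation: a class in $\p_2(C^\8_c(\R,G))$ can be represented by a smooth sphere whose period integral, after integrating out the $\R$-variable by Fubini and using that $\k_\fg(f,g')\,dt$ is an exact-up-to-integral expression, collapses to a period of the form $\per_{\o_G}$ for a finite-dimensional cocycle $\o_G$ on $\fg$ valued in $V$. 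Because $V$ is finite-dimensional and $\fg$ is finite-dimensional, the period group of such a cocycle on the finite-dimensional Lie group $G$ is the image of $\p_2(G)$ under a fixed linear-algebraic map into $V$; as $\p_2(G)$ is finitely generated (indeed, for a finite-dimensional Lie group one may use $\p_2(G)=0$ after passing to the universal cover, or invoke that $\p_2$ of a compact generator is finitely generated), the resulting subgroup of the finite-dimensional space $V$ is finitely generated, hence discrete provided it has no accumulation at $0$.

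**The main obstacle.** The genuinely delicate step is establishing that the finitely generated subgroup one obtains in $V$ is actually \emph{discrete} rather than merely finitely generated (a finitely generated subgroup of $\R^n$ need not be discrete, e.g.\ $\Z+\Z\sqrt2\subset\R$). This is exactly where the universality of $\k_\fg$ and a rationality/integrality argument enter: one shows the periods lie in a lattice by identifying them with the image of the integral homology/homotopy of $G$ under the period map of a Chevalley–Eilenberg cocycle, whose values are controlled by the integral structure coming from $\p_2(G)$ (equivalently $\p_3$ of the compact real form in the semisimple case). I would therefore spend the bulk of the effort invoking the precise finite-dimensional discreteness result from \cite{Neeb:2004} (the computation underlying \cite[Proposition V.19]{Neeb:2004}), whose whole content is that for finite-dimensional $\fg$ and the universal $\k_\fg$ the period group of $\o_\R$ lands in a discrete subgroup of $V$. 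Since the paper explicitly cites this as ``well known,'' the proof reduces to quoting that proposition together with the two preliminary identifications above, and no further original computation is required.
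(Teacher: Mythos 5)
Your proposal takes essentially the same route as the paper: the paper's proof is a one-line citation that argues exactly as in \cite[Proposition V.19]{Neeb:2004} by combining \cite[Theorem II.9]{Maier:2003} with \cite[Lemma V.11]{Neeb:2004}, which is precisely the ``quote the known finite-dimensional discreteness result'' strategy you describe. Your expository sketch of the underlying mechanism has a minor wobble (the relevant homotopy input is $\pi_3(G)$ via the loop-group reduction, not $\pi_2(G)$, which vanishes for finite-dimensional Lie groups --- as you in fact note later when invoking $\pi_3$ of the compact real form), but this does not affect the proof, whose content is the citation itself.
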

\begin{proof}
We argue exactly like in the proof of \cite[Proposition V.19]{Neeb:2004}, by combining \cite[Theorem II.9]{Maier:2003} and \cite[Lemma V.11]{Neeb:2004}.
\end{proof}

\begin{remark}\label{GruproppeperinPFP}
Because $\ol{q} \co \ol{P} \ra M$ is a finite covering, $\ol{q}$ is a proper map and so a curve $\ol{\a}\co  \R \ra \ol{P}$ is proper  if and only if $\ol{q} \ci \ol{\a} \co \R \ra M$ is proper. Hence the maps in $C^\8_p(\R,\ol{P})$ are proper in the usual sense.
\end{remark}

\begin{lemma}\label{GruIntegralLemmaPiStern}
Let $\ol{\a}\co \R \ra \ol{P}$ be a proper map. We define $\ol{x}_0:= \ol{\a}(0)$, $x_0:=\ol{q}(\ol{x}_0)$ and $\a := \ol{q} \ci \ol{\a}$. Moreover let $p_0 \in P$ with ${\p}(p_0)=\ol{x}_0$ and  $\hat{\a}\co \R \ra P$ be the unique horizontal lift of $\a$ to $P$ with $\hat{\a}(0)= p_0$ then
\begin{align*}
\begin{xy}
\xymatrixcolsep{5pc}\xymatrix{
\ol{\Omega}_c^1(\ol{P},V)_{\ol{\r}_V} \ar[d]_-{\cong}^{\p^\ast} \ar[r]_-{[\t] \ms \int_\R \ol{\a}^\ast \t} & V \ar[d]^-{\id}\\
\ol{\Omega}_c^1(P , V)_{\r_V}^\hor  \ar[r]^-{I_\a}_-{[\t] \ms \int_\R \hat{\a}^\ast  \t} & V
}
\end{xy}
\end{align*}
commutes.
\end{lemma}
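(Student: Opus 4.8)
The plan is to reduce the assertion to the identity $\p \ci \hat{\a} = \ol{\a}$ of curves in $\ol{P}$; once this is known, commutativity is a formal consequence of the functoriality of pullback. Indeed, for a representative $\t \in \Omega^1_c(\ol{P},V)_{\ol{\r}_V}$ the composite through the lower-left corner is
\begin{align*}
I_\a(\p^\ast[\t]) = \int_\R \hat{\a}^\ast(\p^\ast \t) = \int_\R (\p \ci \hat{\a})^\ast \t,
\end{align*}
so the diagram commutes exactly when $(\p \ci \hat{\a})^\ast \t$ and $\ol{\a}^\ast \t$ have the same integral, which certainly holds if $\p \ci \hat{\a} = \ol{\a}$. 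This parallels Lemma \ref{GruIntegralLemma}, where the analogous identity $q \ci \hat{\a} = \a$ was immediate from the definition of the horizontal lift.

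To prove $\p \ci \hat{\a} = \ol{\a}$, I would first record that $\ol{q} \co \ol{P} \ra M$ is a finite-sheeted covering map: by our standing convention $\ol{H} = H/N$ is a finite discrete group, so the principal bundle $\ol{H} \hra \ol{P} \xra{\ol{q}} M$ is precisely a finite covering (this is the same observation used in Remark \ref{GruproppeperinPFP}). Next I would verify that both $\ol{\a}$ and $\p \ci \hat{\a}$ are lifts of $\a$ through $\ol{q}$ that agree at $t=0$: on the one hand $\ol{q} \ci \ol{\a} = \a$ by definition of $\a$ and $\ol{\a}(0) = \ol{x}_0$; on the other hand, using $\ol{q} \ci \p = q$ from Definition \ref{GruQuotientenBuendel} together with $q \ci \hat{\a} = \a$, we get $\ol{q} \ci (\p \ci \hat{\a}) = \a$ and $(\p \ci \hat{\a})(0) = \p(p_0) = \ol{x}_0$. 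The uniqueness of path lifting for the covering $\ol{q}$ (a lift of $\a$ is determined by its value at a single point) then forces $\p \ci \hat{\a} = \ol{\a}$, as required.

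I would add one clarifying remark: since $\ol{H}$ is discrete, the induced connection is the full tangent bundle, $H\ol{P} = T\ol{P}$ (cf. Lemma \ref{GruFIX88}(a)), so there is no extra horizontality constraint on $\ol{\a}$ and the ``horizontal lift of $\a$ to $\ol{P}$'' is simply its covering lift; this is why no assumption on $\ol{\a}$ beyond properness is needed. Finally, the well-definedness of the top horizontal map is handled just as in Definition \ref{GruDerPullBack}: for $f \in C^\8_c(\ol{P},V)_{\ol{\r}_V}$ one has $\int_\R \ol{\a}^\ast df = \int_\R (f\ci\ol{\a})' = 0$ because $f \ci \ol{\a}$ is compactly supported, using that $\ol{\a}$ is proper (so that $\a$ is proper by Remark \ref{GruproppeperinPFP} and $\supp(\ol{\a}^\ast\t) \subs \a\inv(K)$ is compact whenever $\supp(\t) \subs \ol{q}\inv(K)$). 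The only substantive step is the covering-space uniqueness argument; everything else is formal manipulation of pullbacks and integrals, so I anticipate no real obstacle.
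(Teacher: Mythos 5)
Your proposal is correct and follows essentially the same route as the paper: both reduce the claim to the identity $\p \ci \hat{\a} = \ol{\a}$, established by a uniqueness-of-lifts argument (the paper phrases it as uniqueness of horizontal lifts to $\ol{P}$, you as uniqueness of covering lifts through $\ol{q}$ — the same thing here, as you note, since $\ol{H}$ is discrete and hence $H\ol{P}=T\ol{P}$), and then conclude by functoriality of pullback. Your additional remarks on well-definedness of the top map are consistent with Definition \ref{GruDerPullBack} and add nothing incorrect.
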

\begin{proof}
We have $\p \ci\hat{\a} = \ol{\a}$ because $\ol{\a}$ is the unique horizontal lift of $\a$ to $\ol{P}$ with $\ol{\a}(0)= \ol{x}_0$ and $\p\ci \hat{\a}$ is also a horizontal lift of $\a$ to $\ol{P}$ that maps $0$ to $\ol{x}_0$.
Hence
\begin{align*}
\int_\R \hat{\a}^\ast (\p^\ast \t) =  \int_\R (\p \ci \hat{\a})^\ast  \t = \int_\R \ol{\a}^\ast  \t
\end{align*}
for $\t \in \Omega^1_c(\ol{P},V)_{\ol{\r}_V}$.
\end{proof}

The proof of the following lemma is similar to the proof of \cite[Lemma A.1]{Neeb:2004}.
\begin{lemma}\label{GruKompaktRegulaer}
Given a compact set $L \subs C^\8_c(P,G)_{\r_G}$ we find a compact set $K\subs M$ such that $L \subs C^\8_K(P,G)_{\r_G}$.
\end{lemma}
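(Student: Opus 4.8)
The plan is to reduce the assertion to the classical fact that in a strict inductive limit of Fréchet spaces every bounded --- hence every compact --- subset already lies in one of the steps, and then to transport this from the modelling space $C^\8_c(P,\fg)_{\r_\fg}$ to the group $C^\8_c(P,G)_{\r_G}$. Since $M$ is $\s$-compact, I first fix a compact exhaustion $K_1 \subs K_2 \subs \dots$ of $M$ with each $K_n$ contained in the interior of $K_{n+1}$ and $M = \bigcup_n K_n$. Then $C^\8_c(P,G)_{\r_G} = \bigcup_n C^\8_{K_n}(P,G)_{\r_G}$ as a group, while the model space $C^\8_c(P,\fg)_{\r_\fg} = \varinjlim_n C^\8_{K_n}(P,\fg)_{\r_\fg}$ is a strict inductive limit of the Fréchet spaces $C^\8_{K_n}(P,\fg)_{\r_\fg} \cong \gg_{K_n}(M,\frak{G})$, each of which is a \emph{closed} topological subspace of the next, because a $C^\8$-limit of sections supported in $q\inv(K_n)$ is again supported there.

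The transport rests on two observations. First, support is subadditive under the group operations: for $f,g \in C^\8_c(P,G)_{\r_G}$ one has $\supp(fg) \subs \supp(f) \cu \supp(g)$ and $\supp(f\inv) = \supp(f)$, since the product and the inverse are formed pointwise in $G$ and fix every point where the factors equal $1_G$. Second, I cover the compact set $L$ by finitely many left translates $g_1\Omega, \dots, g_m\Omega$ of a chart domain $\Omega$ around the identity section, on which the canonical chart $\Phi \co \Omega \ra \Phi(\Omega) \subs C^\8_c(P,\fg)_{\r_\fg}$ is support-preserving (it is built pointwise from a chart of $G$ at $1_G$, so $\Phi(h)(p)=0$ exactly where $h(p)=1_G$). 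Shrinking this finite cover inside the compact Hausdorff --- hence normal --- space $L$, I obtain closed, therefore compact, subsets $L_i \subs L$ with $L_i \subs g_i\Omega$ and $\bigcup_i L_i = L$. Then $g_i\inv L_i \subs \Omega$ is compact, and hence $\Phi(g_i\inv L_i)$ is a compact subset of the model space $C^\8_c(P,\fg)_{\r_\fg}$.

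Now the key input enters: being compact, each $\Phi(g_i\inv L_i)$ is bounded, and by the regularity of the strict $(\mathrm{LF})$-space $C^\8_c(P,\fg)_{\r_\fg}$ (the Dieudonné--Schwartz theorem, applicable by the previous paragraph) it lies in a single step $C^\8_{K_{n_i}}(P,\fg)_{\r_\fg}$. Since $\Phi$ preserves supports, every element of $g_i\inv L_i$ is then supported in $q\inv(K_{n_i})$. Choosing compact sets $C_i$ with $\supp(g_i) \subs q\inv(C_i)$ and putting $K := K_N \cu C_1 \cu \dots \cu C_m$ with $N := \max_i n_i$, subadditivity yields, for any $f \in L$ contained in some $L_i$, the estimate $\supp(f) \subs \supp(g_i) \cu \supp(g_i\inv f) \subs q\inv(K)$; thus $L \subs C^\8_K(P,G)_{\r_G}$, as required. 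The only genuinely non-formal ingredient is the regularity of the strict inductive limit in the model space, but the \emph{main obstacle} is conceptual rather than computational: it is the passage from this linear statement to the non-linear group, where $L$ need not sit in a single chart. This is exactly where the finite translate-cover together with the support-subadditivity does the work, and it is the point at which the argument runs parallel to the proof of \cite[Lemma A.1]{Neeb:2004}; verifying that the chart is support-preserving and that the cover shrinks to compact pieces is then routine bookkeeping.
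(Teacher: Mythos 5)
Your proposal is correct and follows essentially the same route as the paper: reduce to a support-preserving identity chart (the paper uses the exponential map $\exp_\ast \co C^\8_c(P,\fg)_{\r_\fg} \ra C^\8_c(P,G)_{\r_G}$ from \cite[Theorem 4.18]{Schuett:2013}, which satisfies $\exp_\ast(C^\8_K(P,\fg)_{\r_\fg}) \subs C^\8_K(P,G)_{\r_G}$), invoke regularity of the strict LF-space $C^\8_c(P,\fg)_{\r_\fg}$ to place a compact subset of the model space in a single step, and then handle a general compact $L$ by covering it with finitely many left translates $g_i\ol{W}$ and using that supports behave subadditively under the group operations. The only cosmetic difference is that the paper shrinks the cover by intersecting $L$ with the translates of a closed neighbourhood $\ol{W}\subs U$ rather than appealing to normality of $L$.
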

\begin{proof}
From \cite[Theorem 4.18]{Schuett:2013} we know that the map $\exp_\ast \co C^\8_c(P,\fg)_{\r_\fg} \ra C^\8_c(P,G)_{\rho_G},~ f\ms \exp_G \ci f$ is a local diffeomorphism around $0$. Given a compact set $K\subs M$ we have 
\begin{align}\label{GruExpPasst}
\exp_\ast(C^\8_K(P,\fg)_{\r_\fg}) \subs C^\8_K(P,G)_{\r_G}.
\end{align}
Let $U\subs C^\8_c(P,G)_{\r_G}$ be a $1$-neighbourhood and $V\subs C^\8_c(P,\fg)_{\r_\fg}$ be a $0$-neighbourhood such that $\exp_\ast|_V^U$ is a diffeomorphism. We write $\gph:= \left(\exp_\ast|_V^U\right)\inv$. If $L \subs U$ is a compact set, then $\gph (L)$ is a compact subset $C^\8_c(P,\fg)_{\r_\fg}$. Because $C^\8_c(P,\fg)_{\r_\fg}$ is a strict LF-space we find a compact subset $K\subs M$ such that $\gph (L)\subs C^\8_K(P,\fg)_{\r_{\fg}} \cap V$ (see \cite[Theorem 6.4]{Wengenroth:2003} or \cite[Remark 6.2 (d)]{Glockner:2008a}). Hence with (\ref{GruExpPasst}) we get $L \subs C^\8_K(P,G)_{\r_G}$. 
Now let $L \subs C^\8_c(P,G)_{\r_G}$ be an arbitrary compact subset. Let $W\subs C^\8_c(P,G)_{\r_G}$ be a $1$-neighbourhood such that $\ol{W}\subs U$. Because $L$ is compact we find $n\in \N$ and $g_i \in C^\8_c(P,G)_{\r_G}$ such that $L \subs \bigcup_{i=1}^n g_i\. \ol{W}$. Defining the compact set $L_i:= L\cap g_i\. \ol{W}$, we get $L\subs \bigcup_{i=1}^n L_i$. Let $i \in \set{1,\:,n}$. We get $g_i\inv \cdot L_i \subs \ol{W} \subs U$. Hence we find a compact set $K_1\subs  M$ with   $g_i\inv \cdot L_i \subs C^\8_{K_1}(P,G)_{\r_G}$. Let $K_2\subs M$, be compact with $\supp(g_i)\subs q\inv(K_2)$ and $K_i:= K_1\cup K_2$ and $K:= \bigcup_{i=1}^n K_i$. We get 
\begin{align*}
L_i \subs g_i \cdot C^\8_{K_1}(P,G)_{\r_G} \subs C^\8_{K_i}(P,G)_{\r_G} \subs C^\8_{K}(P,G)_{\r_G}.
\end{align*}
Hence $L= \bigcup_{i=1}^n L_i \subs C^\8_{K}(P,G)_{\r_G}$.
\end{proof}

In \cite[Remark IV.17]{Neeb:2004} (where $M$ is non-compact)  Neeb extends a smooth loop $\a \co [0,1]\ra M$ by a smooth proper map $\g \co [0,\8[ \ra M$ to a proper map $\til{a} \co \R \ra M$ such that for all 1-forms $\t$ with compact support one gets $\int_\a\t= \int_{\til{\a}} \t$. This construction is also used in the proof of the following Theorem \ref{GruAllFormsAreClosed}. An analogous theorem was proofed in the compact case in \cite[Proposition 4.11]{Neeb:2009}.
\begin{theorem}\label{GruAllFormsAreClosed}
If $M$ is non-compact, we have 
\begin{align*}
\im(\per_{\omega_M}) = \gp_{\o_M} \subs H^1_{dR,c}(M,\mathbb{V}) \subs \ol{\Omega}^1_c(M,\mathbb{V}).
\end{align*}
This means that all forms in $\gp_{\o_M}$ are closed.
\end{theorem}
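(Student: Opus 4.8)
Since $\gp_{\o_M}$ is by definition the image of $\per_{\o_M}$, the only content is the inclusion $\gp_{\o_M}\subs H^1_{dR,c}(M,\mathbb{V})$, i.e.\ that each class in $\gp_{\o_M}$ admits a closed representative. The plan is to test this on the quotient bundle $\ol P$. Since $\ol H$ is finite, $\ol q\co\ol P\ra M$ is a finite covering, the canonical connection is the flat one $H\ol P=T\ol P$, and $D_{\ol\r_V}$ is just the ordinary exterior derivative of genuinely $V$-valued forms; in particular ordinary Stokes is available on $\ol P$. Using the identifications of Definition \ref{Gruomega} I would represent a given $[\t]\in\gp_{\o_M}$ by a form $\ol\t\in\Omega^1_c(\ol P,V)_{\ol\r_V}$ and would have to prove $d\ol\t=0$.

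First I would reduce closedness to the vanishing of loop integrals. A compactly supported $V$-valued $1$-form $\ol\t$ on $\ol P$ satisfies $d\ol\t=0$ as soon as $\int_{\partial\sigma}\ol\t=0$ for every smooth singular $2$-simplex $\sigma$ in $\ol P$: by Stokes $\int_{\partial\sigma}\ol\t=\int_\sigma d\ol\t$, and a $2$-form whose integral vanishes over all small simplices in charts is zero. Fixing such a $\sigma$, the boundary $\ol\a:=\partial\sigma$ is a loop based at $v:=\sigma(e_0)$ that is null-homotopic in $\ol P$ (it bounds $\sigma$), through a based homotopy $(\ol\a_s)_{s\in[0,1]}$ with $\ol\a_1=\ol\a$ and $\ol\a_0\equiv v$. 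So it suffices to show $\int_{\ol\a}\ol\t=0$.

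Here non-compactness enters. Following \cite[Remark IV.17]{Neeb:2004} I would extend each loop $\ol\a_s$ by one fixed proper ray $\g\co[0,\8[\,\ra\ol P$ emanating from $v$ (run forward on $[1,\8[$ and backward on $]-\8,0]$) to a proper map $\til{\ol\a}_s\in C^\8_p(\R,\ol P)$; the two tails cancel, so $\int_{\til{\ol\a}_s}\ol\t=\int_{\ol\a_s}\ol\t$ for every compactly supported $1$-form. By Remark \ref{GruproppeperinPFP} these maps are proper in the usual sense, and Lemma \ref{GruIntegralLemmaPiStern} identifies $\int_{\til{\ol\a}_s}\ol\t$ with $I_{\a_s}([\t])$ for the base curves $\a_s:=\ol q\ci\til{\ol\a}_s\in C^\8_p(\R,M)$. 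Because $[\t]\in\im(\per_{\o_M})$, Lemma \ref{GruWichtigLemma} then forces $I_{\a_s}([\t])\in\im(\per_{\o_\R})=\gp_{\o_\R}$ for every $s$, and $\gp_{\o_\R}$ is discrete in $V$ by Lemma \ref{GruRLemma}. On the other hand $s\ms\int_{\ol\a_s}\ol\t=\int_0^1(\ol\a_s)^\ast\ol\t$ is continuous on the connected interval $[0,1]$; a continuous map into a discrete group is constant, and at $s=0$ the loop is constant, so the common value is $0$. Hence $\int_{\ol\a}\ol\t=0$; as $\sigma$ was arbitrary, $d\ol\t=0$, whence $[\t]\in H^1_{dR,c}(\ol P,V)_{\ol\r_V}\cong H^1_{dR,c}(M,\mathbb{V})$.

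The step I expect to be the main obstacle is precisely this trade of loop integrals (which detect closedness) for integrals over proper lines (which Lemma \ref{GruWichtigLemma} pins inside the discrete group $\gp_{\o_\R}$): it is exactly here that the non-compactness of $M$, hence of $\ol P$, is indispensable through the tail construction of \cite[Remark IV.17]{Neeb:2004}. The points needing care are that the extension leaves the integral unchanged, that the based homotopy produces a continuous family of \emph{proper} maps with basepoint and tail held fixed, and that all of this is compatible with the passage between $M$, $P$ and $\ol P$ built into Lemmas \ref{GruIntegralLemmaPiStern} and \ref{GruWichtigLemma}.
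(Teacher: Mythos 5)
Your proof is correct, and at the decisive step it takes a genuinely different route from the paper. Both arguments share the same setup: pass to the quotient bundle $\ol{P}$, reduce closedness of a representative to a statement about loop integrals (the paper uses Lemma \ref{GrugeschlForm} and the universal cover where you use Stokes on small simplices --- a cosmetic difference, modulo smoothing the corners of $\partial\sigma$), extend the loops and the based homotopy by a fixed proper ray as in \cite[Remark IV.17]{Neeb:2004}, and transport everything to $P$ via Lemma \ref{GruIntegralLemmaPiStern}. The paper then proves outright that the two induced homomorphisms $\p_2((\hat{\a}_0^\R)^\ast)$ and $\p_2((\hat{\a}_1^\R)^\ast)$ from $\p_2(C^\8_c(P,G)_{\r_G})$ to $\p_2(C^\8_c(\R,G))$ coincide, by exhibiting the explicit homotopy $H(s,x)=\s(x)\ci \hat{F}^\R(s,\bl)$; this costs the support estimate of Lemma \ref{GruKompaktRegulaer}, the identification $\p_2(C^\8_c(\R,G))=\p_2(C_c(\R,G))$ and a continuity check, but it never uses discreteness of $\gp_{\o_\R}$. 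You instead apply Lemma \ref{GruWichtigLemma} for each parameter value $s$ separately to place $\int_{\ol{\a}_s}\ol{\t}$ in the discrete group $\gp_{\o_\R}$ (Lemma \ref{GruRLemma}) and conclude by continuity of $s\ms\int_0^1(\ol{\a}_s)^\ast\ol{\t}$ on the connected interval $[0,1]$. This is shorter and avoids all the $\p_2$-level topology, at the price of importing the discreteness of $\gp_{\o_\R}$ into this theorem; that is harmless here, since Theorem \ref{GruMainI} needs that discreteness anyway and the generalisation in Theorem \ref{GruMainI.2} explicitly assumes it. The technicalities you flag (reparametrising so that all jets vanish at the gluing points, keeping basepoint and tail fixed so that the family stays proper and the tail contributions cancel) are exactly the ones the paper resolves with the diffeomorphism $\ph\co[0,1]\ra[0,1]$ with vanishing jets at $0$ and $1$; once these are supplied, your argument goes through.
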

\begin{proof}
Because $\p^\ast \co {\Omega}^\bl_c(\ol{P},V)_{\ol{\r}_V} \ra  {\Omega}^\bl_c(P,V)_{\r_V}^\hor$ is  an isomorphism of chain complexes, it is enough to show $\gp_{\o_M} \subs H^1_{dR,c}(\ol{P},V) \subs \ol{\Omega}^1_c(\ol{P},V)$. To this end let $[\t] \in \gp_{\o_M}$  and $\ol{\a}_0, \ol{\a}_1 \co [0,1]\ra \ol{P}$ be closed smooth curves in a point $\ol{x}_0 \in \ol{P}$ that are homotopic  relative $\set{0,1}$ by a smooth homotopy $\ol{F}\co [0,1]^2 \ra \ol{P}$. From Lemma \ref{GrugeschlForm} we get that it is enough to show $\int_{\ol{\a}_0} \t=\int_{\ol{\a}_1} \t$.
By composing $\ol{\a}_i$ respectively $\ol{F}(s,\bl)$ with a strictly increasing diffeomorphism $\ph \co [0,1]\ra [0,1]$ whose jets  vanish in  $0$ and $1$, respectively, we can assume that in a local chart all derivatives of $\ol{\a}_i$ and $\ol{F}(s,\bl)$ vanish in $0$ and $1$ respectively, because $\int_{\ol{\a}_i}\t = \int_{\ol{\a}_i\ci \ph}\t$ (forward parametrization dose not change line integrals).
Because $M$ is non-compact, we find a proper map $\ol{\g} \co [0,\8[ \ra \ol{P}$ such that $\g(0)=\ol{x}_0$ and in a local chart all derivatives vanish in $0$ (see \cite[Lemma IV. 5]{Neeb:2004} and composition with $x\ms x^3$). For $i \in \set{0,1}$ we define the smooth map
\begin{align*}
\ol{\a}_i^\R \co \R \ra \ol{P},~ t \ms 
\begin{cases}
\ol{\g}(-t) &:t<0\\
\ol{\a}_i(t)&:t \in [0,1]\\
\ol{\g}(t-1)&: t>1.
\end{cases}
\end{align*}
Moreover, we define the smooth homotopy
\begin{align*}
\ol{F}^\R \co [0,1] \ti \R \ra \ol{P}, ~ (s,t) \ms 
\begin{cases}
\ol{\g}(-t) &:t<0\\
\ol{F}(s,t)&:t \in [0,1]\\
\ol{\g}(t-1)&: t>1.
\end{cases}
\end{align*}
Hence we have $\ol{\a}_i^\R, \ol{F}^\R(s,\bl) \in C^\8_p(\R,\ol{P})$ for $i \in \set{0,1}$ and $s\in [0,1]$ (see Remark \ref{GruproppeperinPFP}). We define $\a_i:=\ol{q}\ci\ol{\a}_i$, $F:=\ol{q}\ci \ol{F}$, $\a_i^\R:= \ol{q}\ci \ol{\a}^\R_i$, $F^\R:= \ol{q}\ci \ol{F}^\R$, $\g:= \ol{q}\ci \ol{\g}$ and $x_0:=\ol{x}_0$.  
The curves $\a_0$ and $\a_1$ are closed curves in $x_0$ and are homotopic relative $\set{0,1}$ by the homotopy $F$, because $\a_i(j)=\ol{q}(\ol{x}_0) =x_0$ and $F(i,\bl)=\ol{q}\ci \ol{F}(i,\bl)=\ol{q}\ci \ol{\a}_i = \a_i$ for $j,i \in \set{0,1}$. Moreover
\begin{align*}
\a_i^\R(t)=
\begin{cases}
\g(-t) &:t<0\\
\a_i(t)&:t \in [0,1]\\
\g(t-1)&: t>1,
\end{cases}
\\
F^\R(s,t)=
\begin{cases}
\g(-t) &:t<0\\
F(s,t)&:t \in [0,1]\\
\g(t-1)&: t>1
\end{cases}
\end{align*}
and $\a_i^\R, F^\R(s,\bl) \in C^\8_p(\R,M)$.
We choose $p_0 \in \p\inv(\set{\ol{x}_0})$. Now let $\hat{\a}_i^\R \co \R \ra P$ be the unique horizontal lift of $\a_i^\R$ to $P$ with $\hat{\a}^\R_i(0)=p_0$ and $\hat{F}^\R\co [0,1]\ti \R \ra P$ be the unique horizontal lift of $F^\R$ to $P$ such that $\hat{F}^\R(s,0) = p_0$ for all $s\in [0,1]$. The map $\hat{F}^\R$ is not a homotopy relative $\set{0,1}$, but we have $\hat{F}^\R(0,s) = \hat{\a}_0^\R(s)$ and $\hat{F}^\R(1,s) = \hat{\a}_1^\R(s)$ for all $s \in [0,1]$. For $i\in \set{0,1}$ we have
\begin{align}\label{GruIntegGleichung}
\int_{\ol{\a}_i}\t =\int_{\ol{\a}_i^\R}\t = \int_{\hat{\a}^\R_i} \p^\ast \t,
\end{align}
where the last equation follows from Lemma \ref{GruIntegralLemmaPiStern}.  Because of (\ref{GruIntegGleichung}) and Lemma \ref{GruWichtigLemma} it is enough to show $\p_2((\hat{\a}_0^\R)^\ast)= \p_2((\hat{\a}_1^\R)^\ast)$ as group homomorphisms from $\p_2(C^\8_c(P,G)_{\r_G})$ to $\p_2(C^\8_c(\R,G))$. From \cite[Theorem A.7]{Neeb:2004} we get $\p_2(C^\8_c(\R,G)) = \p_2(C_c(\R,G))$. We set $I:=[0,1]$. Let $\s \co I^2 \ra C^\8_c(P,G)_{\r_G}$ be continuous with $\s|_{\partial I^2} =c_{1_G}$. Because $\p_2((\hat{\a}_i^\R)^\ast)([\s]) = [\s(\bl)\ci \hat{\a}_i^\R]$ for $i\in \set{0,1}$ it is enough to show
\begin{align*}
[\s(\bl)\ci \hat{\a}_0^\R] = [\s(\bl)\ci \hat{\a}_1^\R]
\end{align*}
in $\p_2(C_c(\R,G))$. Hence we have to construct a continuous map $H\co [0,1]\ti I^2 \ra C_c(\R,G)$ with $H(0,\bl)= \s(\bl) \ci \hat{a}^\R_0$, $H(1,\bl)=\s(\bl) \ci \hat{\a}^\R_1$ and $H(s,x) =c_{1_G}$ for all $s\in [0,1]$ and $x\in \partial I^2$.
We define $H(s,x)= \s(x)\ci \hat{F}^\R(s,\bl)$ for $s\in [0,1]$ and $x \in I^2$. Because $\sigma|_{\partial I^2}= c_{1_G}$, it is left to show that $H$ is continuous. Let $K\subs M$ be compact such that $\im(\s) = \s(I^2) \subs C^\8_K(P,G)_{\r_G}$ (see Lemma \ref{GruKompaktRegulaer}). For $f\in C^\8_K(P,G)_{\r_G}$ we have $\supp(f\ci \hat{\a}^\R_i) \subs {\a_i^\R}\inv(K)$ as well as $\supp(f\ci \hat{F}^\R(s,\bl)) \subs F^\R(s,\bl)\inv(K)$ for $s\in [0,1]$. Hence $\supp(\s(x)\ci \hat{F}^\R(s,\bl)) \subs F^\R(s,\bl)\inv(K)$ for $x\in I^2$ and $s\in [0,1]$. We have
\begin{align*}
&{F^\R}\inv(K) = F^\R|_{[0,1]\ti [0,1]}\inv (K) \cup F^\R|_{[0,1]\ti]-\8,0]}\inv(K) \cup F^\R|_{[0,1]\ti[1,\8[}\inv(K)\\
=&F^\R|_{[0,1]\ti [0,1]}\inv (K) \cup ([0,1] \ti -\g\inv(K)) \cup ([0,1] \ti \g\inv(K)+1).
\end{align*}
Hence ${F^\R}\inv (K)\subs [0,1]\ti \R$ is compact. Therefore 
\begin{align*}
L:= \bigcup_{s\in [0,1]} F^\R(s,\bl)\inv (K) = \pr_2({F^\R}\inv(K))\subs \R
\end{align*}
is compact. We have $\supp(\s(x)\ci \hat{F}^\R(s,\bl)) \subs L$ for all $x\in I^2$ and $s\in [0,1]$. Thus $\im(H) \subs C_L(\R,G)$. Therefore it is enough to show that $H\co [0,1]\ti I^2 \ra C_L(\R,G) \subs C(\R,G),~ (s,x) \ms \s(x)\ci \hat{F}^\R(s,\bl)$ 
is continuous. We know that $\ta \co [0,1]\ra C(\R,P),~ s\ms \hat{F}^\R(s,\bl)$ is continuous and so the assertion follows from the following commutative diagram
\begin{align*}
\begin{xy}
\xymatrixcolsep{5pc}\xymatrix{
[0,1]\ti I^2 \ar[r]^{\ta \ti \s} \ar[ddr]^H& C(\R,P) \ti C^\8_K(P,G)_{\r_G} \ar@{_{(}->}[d]\\
 & C(\R,P)\ti C(P,G)\ar[d]^{(\a,f)\ms f\ci \a}\\
 &C(\R,G).
}
\end{xy}
\end{align*}
\end{proof}

The following Theorem \ref{GruMainI} corresponds to the Reduction Theorem \cite[Theorem 4.14]{Neeb:2009} (compact base manifold $M$) in the case of a finite-dimensional Lie group $G$ and a finite-dimensional principal bundle $P$. See also Theorem \ref{GruMainI.2}.
\begin{theorem}\label{GruMainI}
The period group $\gp_{\o_M} = \im \per_{\o_M}$ is discrete in $\ol{\Omega}_c^1(M,\mathbb{V})$.
\end{theorem}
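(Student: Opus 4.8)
The plan is to reduce the assertion to the already-available discreteness of $\gp_{\o_\R}$ by transporting the period group to the base manifold $M$ via $q_\ast$ and then feeding it into the abstract criterion of Lemma \ref{GruDiskLemma}. The crucial preliminary input is Theorem \ref{GruAllFormsAreClosed}, which gives $\gp_{\o_M} \subs H^1_{dR,c}(M,\mathbb{V})$; precisely because the period classes are represented by \emph{closed} forms can I apply $q_\ast$, which is only defined on de Rham cohomology $H^1_{dR,c}(P,V)_{\r_V} \cong H^1_{dR,c}(M,V_\fix)$ (under the identification $H^1_{dR,c}(M,\mathbb{V}) \cong H^1_{dR,c}(P,V)_{\r_V}$).

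Set $\gg := \gp_{\o_\R} \subs V$, which is a discrete subgroup by Lemma \ref{GruRLemma}. The heart of the argument is a diagram chase. Fixing $[\t] = \per_{\o_M}([\s]) \in \gp_{\o_M}$ with $[\s] \in \p_2(C^\8_c(P,G)_{\r_G})$ and letting $\a \in C^\8_p(\R,M)$ be an arbitrary proper curve with horizontal lift $\hat{\a}$, one combines Lemma \ref{GruIntegralLemma} and Lemma \ref{GruWichtigLemma} to obtain
\begin{align*}
\int_\R \a^\ast (q_\ast[\t]) = \int_\R \hat{\a}^\ast \t = I_\a(\per_{\o_M}([\s])) = \per_{\o_\R}(\p_2(\hat{\a}^\ast_G)([\s])) \in \im(\per_{\o_\R}) = \gg.
\end{align*}
Since $\a$ was arbitrary, this shows $q_\ast[\t] \in H^1_{dR,c}(M,\gg)$ for every $[\t] \in \gp_{\o_M}$; that is, $q_\ast(\gp_{\o_M}) \subs H^1_{dR,c}(M,\gg)$. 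Note that $q_\ast[\t]$ is represented by a $V_\fix$-valued form, so it is in particular a $V$-valued class to which the definition of $H^1_{dR,c}(M,\gg)$ applies.

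By Lemma \ref{GruDiskLemma}, $H^1_{dR,c}(M,\gg)$ is a discrete subgroup of $\ol{\Omega}^1_c(M,V)$, and hence so is its subgroup $q_\ast(\gp_{\o_M})$. It then remains to transport this discreteness back along the chain of topological identifications established earlier: the embedding $\ol{\Omega}^1_c(M,V_\fix) \hra \ol{\Omega}^1_c(M,V)$ (arising from $Z^1_{dR,c}(M,V_\fix) = Z^1_{dR,c}(M,V)_\fix$ and $B^1_{dR,c}(M,V_\fix)=B^1_{dR,c}(M,V)_\fix$), the isomorphism $q_\ast \co H^1_{dR,c}(P,V)_{\r_V} \ra H^1_{dR,c}(M,V_\fix)$, and the identification $\ol{\Omega}^1_c(M,\mathbb{V}) \cong \ol{\Omega}^1_c(P,V)^\hor_{\r_V}$. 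Since each of these is a homeomorphism onto its image and discreteness of a subgroup is preserved under such maps, $\gp_{\o_M}$ is discrete in $\ol{\Omega}^1_c(M,\mathbb{V})$.

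The genuinely substantial ingredients are already in hand: that all period classes are closed (Theorem \ref{GruAllFormsAreClosed}), that $\gp_{\o_\R}$ is discrete (Lemma \ref{GruRLemma}), and the abstract criterion of Lemma \ref{GruDiskLemma}. I therefore expect no conceptual obstacle to remain. The one point requiring genuine care is that the integral condition defining $H^1_{dR,c}(M,\gg)$ must hold for \emph{every} proper curve $\a$, which is exactly what the combination of Lemmas \ref{GruWichtigLemma} and \ref{GruIntegralLemma} delivers; beyond that, the work is the routine bookkeeping of carrying discreteness through the chain of topological isomorphisms relating $\mathbb{V}$, $V_\fix$, and $V$.
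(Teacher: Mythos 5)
Your proposal is correct and follows essentially the same route as the paper's own proof: both use Theorem \ref{GruAllFormsAreClosed} to place $\gp_{\o_M}$ inside $H^1_{dR,c}(M,\mathbb{V})\cong H^1_{dR,c}(P,V)_{\r_V}$, transport it to $H^1_{dR,c}(M,V_{\fix})$ via $q_\ast$, and then verify the integral criterion of Lemma \ref{GruDiskLemma} by the same chain of identities from Lemmas \ref{GruIntegralLemma} and \ref{GruWichtigLemma} together with the discreteness of $\gp_{\o_\R}$ from Lemma \ref{GruRLemma}. Your write-up is in fact slightly more explicit than the paper's about where the closedness of the period classes and the final transport of discreteness through the topological isomorphisms are used.
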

\begin{proof}
Because $q^\ast \co H^1_{dR,c}(M,V_\fix) \ra H^1_{dR,c}(P,V)_{\r_V}$ is an isomorphism of topological vector spaces and $\gp_{\o_M} \subs H^1_{dR,c}(M,\mathbb{V}) = H^1_{dR,c}(P,V)_{\r_V}$, it is sufficient to show that $\gp_{\o_M}$ is a discrete sub group of $H^1_{dR,c}(M,V)$ (Lemma \ref{GruIntegralLemma}).
With  Lemma \ref{GruDiskLemma} and Lemma \ref{GruRLemma} it is enough to show
\begin{align}\label{GruTada}
\gp_{\o_M} \subs H_{dR,c}^1(M,\gp_\R).
\end{align}
Let $\b \in \gp_{\o_M}$, $\a \in C^\8_p(\R,M)$ and $[\s]\in \p_2(C^\8_c(P,G)_{\r_G})$ with $\b = \per_{\o_M}([\s])$. Using Lemma \ref{GruIntegralLemma} and Lemma \ref{GruWichtigLemma} we get
\begin{align*}
\int_\R \a^\ast q_\ast \b = \int_\R \hat{\a}^\ast \b = I_\a \ci \per_{\o_M} ([\s]) = \per_{\o_\R} \ci \p_2(\hat{\a}^\ast) ([\s]) \in \gp_{\o_\R}.
\end{align*}
Hence we get (\ref{GruTada}).
\end{proof}

\begin{theorem}\label{GruMainI.2}
If the structure group $H$ and the Lie group $G$ are not finite-dimensional but locally exponential Lie groups that are modelled over locally convex spaces and $\o \co \fg^2 \ra V $ is not necessarily the universal continuous invariant bilinear form but just a continuous invariant bilinear form with values in a Fr{\'e}chet space then $\gp_\o$ is discrete if $\gp_{\o_\R}$ is discrete in $V$. We emphasise that the base manifold $M$ still has to be $\sigma$-compact and finite-dimensional and $\ol{H}$ still has to be finite, but we do not need to assume $\p_2(G)$ to be trivial.
\end{theorem}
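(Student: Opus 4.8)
The plan is to re-run the proof of Theorem~\ref{GruMainI} verbatim and to isolate the precise points at which the finite-dimensionality of $G$ and $H$, or the universality of $\k_\fg$, is actually used. The key observation I expect to make is that these hypotheses enter the whole argument in exactly one place, namely Lemma~\ref{GruRLemma}, where the discreteness of $\gp_{\o_\R}$ in $\ol{\Omega}_c^1(\R,V)\cong V$ is deduced from universality of $\k_\fg$ together with $\dim V<\8$. Since the generalised statement \emph{assumes} $\gp_{\o_\R}$ to be discrete, I would simply strike Lemma~\ref{GruRLemma} from the chain of implications and feed in the hypothesis at that point; the remaining skeleton $\gp_\o\subs H^1_{dR,c}(M,\gp_\R)$ together with the conclusion via Lemma~\ref{GruDiskLemma} would be kept intact, provided the auxiliary results survive the passage to infinite dimensions and to a general Fr\'echet-valued cocycle.

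First I would dispatch the formal and averaging ingredients, which do not see the dimension of $G$ or the nature of $\o$. The naturality identities Lemma~\ref{GruLemma1}, Lemma~\ref{GruLemmaInteggM} and Lemma~\ref{GruLemma2}, hence Lemma~\ref{GruWichtigLemma}, and the transfer diagram of Lemma~\ref{GruIntegralLemma}, only manipulate $\o$, its left invariant form, the pullback homomorphism $\hat{\a}^\ast$ and the weak integral $I_\a$; they are insensitive to the coefficients. The weak integrals defining $I_\a$, $\per_{\o_\R}$ and $\per_\o$ exist because $V$ is Fr\'echet and hence sequentially complete, the quotient target $\ol{\Omega}_c^1(M,\mathbb{V})$ being covered by Remark~\ref{GruVollststaendigno}; this is exactly where the Fr\'echet hypothesis does its work. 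Likewise the finite averaging $\tfrac1n\sum_{g}g.\bl$ producing the topological isomorphism $q^\ast\co H^1_{dR,c}(M,V_\fix)\ra H^1_{dR,c}(P,V)_{\r_V}$, and the identifications of Definition~\ref{Gruomega}, use only that $\ol H$ is finite, so they persist verbatim for an arbitrary locally convex $V$.

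Next I would treat the two genuinely analytic inputs that \cite{Neeb:2004} supplies only for scalar coefficients: the closedness of $dC^\8_c(\bl)$ (Lemma~\ref{GruBkdlc89} and Lemma~\ref{GruAbgeschlossenDrVff}, which make $\ol{\Omega}_c^1(M,\mathbb{V})$ Hausdorff) and the discreteness Lemma~\ref{GruDiskLemma}. For a Fr\'echet target I would re-derive them tensorially: on each compact $K$ one has $C^\8_K(M,V)\cong C^\8_K(M,\R)\,\what{\oti}\,V$ by nuclearity, the exterior derivative acting as $d\oti\id_V$, so that closedness of the scalar image and completeness of $V$ force closedness in the vector-valued setting, while the period embedding underlying Lemma~\ref{GruDiskLemma} (pairing a class with the locally finite $1$-cycles carried by proper curves) extends by the same bookkeeping once $\gp_{\o_\R}\subs V$ is discrete. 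This re-derivation is the step I expect to be the main obstacle, since it is the only place where replacing $\R$ by an infinite-dimensional Fr\'echet space is not cosmetic; the Fr\'echet assumption on $V$ is precisely what keeps these arguments alive.

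Finally I would revisit Theorem~\ref{GruAllFormsAreClosed} and the Lie-theoretic lemmas. The closedness of the periods reduces, through Lemma~\ref{GruWichtigLemma}, to the equality $\p_2((\hat{\a}_0^\R)^\ast)=\p_2((\hat{\a}_1^\R)^\ast)$, which is established by building a \emph{continuous} homotopy $H$ and invoking $\p_2(C^\8_c(\R,G))=\p_2(C_c(\R,G))$; since this argument proves an \emph{equality} of two classes rather than the vanishing of a group, it never requires $\p_2(G)=0$, and the smooth-versus-continuous identification persists for locally exponential $G$ by the smoothing results for mapping groups. This is why the triviality of $\p_2(G)$ need not be assumed. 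The smoothness of $\hat{\a}^\ast_G$ (Lemma~\ref{GruDerPullBackSmooth}) and the compact-regularity statement (Lemma~\ref{GruKompaktRegulaer}) rest on Sch\"utt's construction \cite{Schuett:2013}, the exponential-law smoothness of \cite{Alzaareer:2013}, the local exponentiality of $G$ (so that $\exp_\ast$ is a local diffeomorphism) and the strict $LF$-structure of $C^\8_c(P,\fg)_{\r_\fg}$, all of which remain available for locally exponential $G$ and $H$ over locally convex spaces. Assembling the surviving pieces exactly as in the proof of Theorem~\ref{GruMainI} yields $\gp_\o\subs H^1_{dR,c}(M,\gp_\R)$, whence $\gp_\o$ is discrete.
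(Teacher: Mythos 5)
Your proposal is correct and takes essentially the same route as the paper: the paper's entire proof of Theorem \ref{GruMainI.2} is the single observation that finite-dimensionality of $G$ and universality of $\o$ enter only through Lemma \ref{GruRLemma}, which is now replaced by the hypothesis that $\gp_{\o_\R}$ is discrete. The additional verifications you sketch (closedness of $dC^\8_c(\bl)$ for Fr\'echet-valued coefficients, survival of the smoothing and pullback lemmas for locally exponential groups) are points the paper leaves implicit, so your write-up is, if anything, more careful than the original.
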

\begin{proof}
The only point where it was necessary to assume $G$ to be finite-dimensional and  $\o$ to be universal was in Lemma \ref{GruRLemma}.
\end{proof}

\section{Integration of the Lie algebra action and the main result}\label{Gru1234}

In the case of a compact base manifold (\cite[Section 4.2 (Part about general Lie algebra bundles)]{Neeb:2009}) Neeb and Wockel integrated the adjoined action of $\gg(\frak{G})$ on $\widehat{\gg(\frak{G})}:=\ol{\Omega}^1(M,\mathbb{V})\ti_\o \gg(\frak{G})$ given by 
\begin{align*}
\gg(\frak{G}) \ti \widehat{\gg(\frak{G})} \ra \widehat{\gg(\frak{G})},~
(\eta,([\a],\gamma)) \ms [\eta,([\a],\gamma)]_{\o} = (\o(\eta, \gamma), [\eta,\gamma])
\end{align*}
to a Lie group action of $\gg(\mathcal{G})$ on $\widehat{\gg(\frak{G})}$. As a first step in their proof, Neeb and Wockel  integrated the covariant derivative $d_\frak{G}\co \gg(\frak{G}) \ra \Omega^1(M,\frak{G})$ to a smooth map from $\gg(\mathcal{G})$ to  $\Omega^1(M,\frak{G})$. 
As the absolute derivative is the sum of $d\co C^\8(P,\fg)_{\r_\fg} \ra \Omega^1(P,\fg)$ and $C^\8(P,\fg)_{\r_\fg} \ra \Omega^1(P,\frak{g}),~ f\ms \r_\ast(Z)\we f$, where $Z\co TP \ra L(H)=:\frak{h}$ is the connection form, $\rho_\ast = L(\rho_\fg)\co \frak{h}\ra \der(\fg)$ and $(\r_\ast(Z)\we f)_p(v) = \rho_\ast(Z(v)).f(p)$, they integrated these summands separately. The image of the exterior derivative dose not lie in $\Omega^1(M,\fg)_\fg^\hor$, but in the space $\Omega^1(M,\fg)_\fg$ and in some sense the summand $f\ms \r_\ast(Z)\we f$ annihilates the vertical parts of $df$. The exterior derivative $d\co C^\8(P,\fg)_{\rho_\fg} \ra\Omega^1(M,\fg)_\fg$ integrates to the left logarithmic derivative $\d \co C^\8(P,G)_{\rho_G} \ra\Omega^1(M,\fg)_\fg$, $\ph \ms \d(\ph)$ with $\d(\ph)_p(v)= T\l_{\ph(p)\inv}\ci T\ph(v)$ (\cite{Neeb:2009}). The integration of the second summand is more complicated, and Neeb and Wockel assumed the Lie group $G$ to be $1$-connected (in the special case of the gauge group they did not need this assumption (see \cite[Theorem4.21]{Neeb:2009})).
In the second step they used an exponential law to obtain the integrated action.  
Because our base manifold is not compact the adjoined action of $\gg_c(\frak{G})$ on $\widehat{\gg_c(\frak{G})}:=\ol{\Omega}_c^1(M,\mathbb{V})\ti_{\o_M} \gg_c(\frak{G})$ given by
\begin{align*}
\gg_c(\frak{G}) \ti \widehat{\gg_c(\frak{G})} \ra \widehat{\gg_c(\frak{G})},~
(\eta,([\a],\gamma)) \ms (\o_M(\eta, \gamma), [\eta,\gamma]).
\end{align*}
With the canonical identifications (see Remark \ref{GruRmarkallslsgfla}) the adjoint action is given by 
\begin{gather}\label{GruAdacttitit234}
C^\8_c(P,\fg)_{\r_\fg} \ti (\ol{\Omega}_c^1(P,V)_{\r_V}^\hor \ti_{\o_M} C^\8_c(P,\fg)_{\r_\fg}) \ra \ol{\Omega}_c^1(P,V)_{\r_V}^\hor \ti_{\o_M} C^\8_c(P,\fg)_{\r_\fg}\nonumber\\
(g , ([\a],f)) \ms ([\k_\fg(g,D_{\rho_\fg}(f))], \ad(g,f)).
\end{gather}
We have to integrate this action to a Lie group action of $(\gg_c(\mathcal{G}))_0$ on $\widehat{\gg_c(\frak{G})}$.\footnote{In an earlier version of this paper, we presented a more complicated argumentation for this result. Also our proof required the group $G$ to be semisimple. In this version of the paper we do not assume $G$ to be semisimple.}
Like Neeb and Wockel we have to integrate the covariant derivative $d_\frak{G}\co \gg_c(\frak{G}) \ra \Omega_c^1(M,\frak{G})$ to a smooth map from $\gg_c(\mathcal{G})$ to  $\Omega_c^1(M,\frak{G})$. But we will not describe the absolute derivative via the connection form $Z$ as the sum of the exterior derivative $d$ and the map $f\ms \r_\ast(Z)\we f$. Instead we use the principal connection $HP$ and write $D_{\rho_\fg}= \pr_h^\ast\ci d$, where $\pr_h$ is the projection onto the horizontal bundle and $(\pr_h^\ast\ci d)(f)(v)= df(\pr_h(v))$. In Theorem \ref{Grunctandk} we will show that the map $\Delta:=\pr_h^\ast \ci \d \co C^\8_c(P,G) \ra \Omega^1_c(P,\fg)_{\rho_\fg}^\hor$ is smooth and its derivative in $1$ is given by the absolute derivative $D_{\rho_\fg}$. One could show the smoothness of $\d \co C^\8_c(P,G)_{\rho_{G}} \ra \Omega^1_c(P,\fg)_{\fg}$ and $\pr_h^\ast \co \Omega^1_c(P,\fg)_{\fg} \ra \Omega^1_c(P,\fg)_{\fg}^\hor$ separately, but it is more convenient to show the smoothness of $\Delta$ directly, because we work in the non-compact case and $\Omega^1_c(P,\fg)_{\rho_\fg}^\hor$ is an inductive limit (compare Lemma \ref{Grukljgdfljhg}).
\begin{remark}
In \cite[Chapter 4.2 page 408]{Neeb:2009} Neeb and Wockel define  $\chi^Z(f)v:= \chi(Z(v),(f(p))$ for $f\in \gg\mathcal{G} = C^\8(P, G)_{\r_G}$, $v\in T_pP$, $Z$ the connection form of $P$ and a smooth map $\chi \co \fh \ti G \ra \fg$ that is linear in the first argument. If the connection on $P$ is not trivial, then $TP \ra \fg,~ v \ms \chi(Z(v),(f(p)))$ is not in $\Omega^1(M,\frak{G})= \Omega^1(P,\fg)^\hor_{\r_\fg}$, because it is not horizontal except it is constant $0$. Although the image of the map $\d^\nabla(f) = \d(f)+ \chi^Z(f\inv)$ lies in $\Omega^1(P,\fg)^\hor_{\r_\fg}$, because the image of its derivative in $1$ lies in $\Omega^1(P,\fg)^\hor_{\r_\fg}$ and $\d^\nabla$ is a $1$-cocycle with respect to the adjoint action of $C^\8(P, G)_{\r_G}$ on $\Omega^1(P,\fg)_{\r_\fg}$ and $\Omega^1(P,\fg)^\hor_{\r_\fg}$ is invariant under this action. Although this consideration is not important for our argumentation, because we use a different description of the absolute derivative, as mentioned above.
\end{remark}

\begin{lemma}\label{GruLemmareresk09834t}
Let $V$ be a locally convex space and $G$ a locally convex Lie group. Moreover let $\mu \co G\ti V \ra V$ be  a map that is continuous linear in the second argument.  Let $f\co G \ra V$ be a map that is smooth on an $1$-neighbourhood. If $f(hg) = f(g)+ \mu(g\inv,f(h))$ for $g,h \in G$ then $f$ is smooth.
\end{lemma}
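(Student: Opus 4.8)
The plan is to exploit the locality of smoothness: since $G$ is a Lie group it suffices to prove that $f$ is smooth on some neighbourhood of each point $g_0 \in G$, and the defining relation will let me transport the assumed smoothness near the identity to smoothness near $g_0$ by means of a right translation. Concretely, fix $g_0 \in G$ and let $U \subs G$ be the $1$-neighbourhood on which $f$ is smooth. Specialising the relation $f(hg) = f(g) + \mu(g\inv, f(h))$ to $g = g_0$ and letting $h$ vary gives
\begin{align*}
f(h g_0) = f(g_0) + \mu\bigl(g_0\inv, f(h)\bigr).
\end{align*}

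Next I would turn this into an identity for $f$ on a neighbourhood of $g_0$ via the substitution $x = h g_0$, that is $h = x g_0\inv$. Since $U$ is open and contains $1$, the set $W := U g_0$ is an open neighbourhood of $g_0$, and for every $x \in W$ we have $x g_0\inv \in U$. On $W$ the relation reads
\begin{align*}
f(x) = f(g_0) + \mu\bigl(g_0\inv, f(x g_0\inv)\bigr).
\end{align*}

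The decisive observation is that the right-hand side is manifestly smooth in $x \in W$. The right translation $R_{g_0\inv} \co G \ra G$, $x \ms x g_0\inv$, is smooth and maps $W$ into $U$, and $f$ is smooth on $U$, so $x \ms f(x g_0\inv)$ is a smooth map $W \ra V$. Composing with $\mu(g_0\inv, \bl) \co V \ra V$, which is continuous and linear by hypothesis and hence smooth (a continuous linear map between locally convex spaces is smooth in the Bastiani calculus), and adding the constant vector $f(g_0)$, one stays in the class of smooth maps. Thus $f|_W$ is smooth, and since $g_0$ was arbitrary, $f$ is smooth on all of $G$.

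There is no serious obstacle; the argument is essentially a transport of structure, and the only points requiring care are (i) the bookkeeping of domains, namely choosing $W = U g_0$ so that the inner evaluation $f(x g_0\inv)$ genuinely lands in the region $U$ where $f$ is known to be smooth, and (ii) the invocation of the standard fact that a continuous linear map is smooth, which is exactly where the hypothesis on $\mu$ enters. Note that only the single slice $\mu(g_0\inv, \bl)$ is used, so no joint continuity or smoothness of $\mu$ in both arguments is needed.
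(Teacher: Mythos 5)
Your proposal is correct and coincides with the paper's own proof: both fix $g_0$, rewrite the cocycle identity as $f(x) = f(g_0) + \mu\bigl(g_0\inv, f(x g_0\inv)\bigr)$ on the neighbourhood $U g_0$, and observe that the right-hand side is a composition of the smooth right translation, $f|_U$, the continuous linear (hence smooth) map $\mu(g_0\inv,\,\cdot\,)$, and addition of a constant. No differences worth noting.
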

\begin{proof}
Let $U\subs G$ be a $1$-neighbourhood such that $f|_U$ is smooth and $g\in G$. Then $Ug$ is a $g$-neighbourhood  and given $z\in Ug$ we define $h:=  z g\inv\in U$. Hence $z = hg$. Now we calculate
\begin{align*}
&f(z)= f(hg) = f(g)+ \mu(g\inv,f(h)) = f(g)+ \mu(g\inv,f(zg\inv)) \\
= &f(g) + \mu(g\inv,f|_U\ci \varrho_{g\inv}(z)).
\end{align*}
Hence
\begin{align*}
f|_{gU}= f(g)+ \mu(g\inv,\bl)\ci f|_U \ci \varrho_{g\inv}|_{gU}.
\end{align*}
\end{proof}

\begin{lemma}\label{GruAdInvarianzOmega}
We considering the map 
\begin{align*}
\mu\co  C^\8_c(P,G)_{\r_G} \ti \Omega^1_c(P,\fg) \ra \Omega^1_c(P,\fg),~ (\ph,\t) \ra \Ad^G_{\ph}.\t
\end{align*}
with $\Ad^G_{\ph}.\t \co TP \ra \fg,~ v\ms \Ad^G_{\ph(\p(v))}.\t(v)$ and the canonical projection $\pi \co TP \ra P$. The sub space  $\Omega^1(P,\fg)_{\r_\fg}^\hor$ is $\mu$-invariant. In this context $\Ad^G$ means the adjoint action of $G$ on $\fg$.
\end{lemma}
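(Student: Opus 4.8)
The plan is to fix $\ph \in C^\8_c(P,G)_{\r_G}$ and $\t \in \Omega^1(P,\fg)_{\r_\fg}^\hor$, write $\s := \mu(\ph,\t) = \Ad^G_\ph.\t$, and verify the two defining properties of $\Omega^1(P,\fg)_{\r_\fg}^\hor$ for $\s$ directly, pointwise on $TP$. Horizontality is immediate: if $v \in V_pP$, then $\t_p(v)=0$ because $\t$ is horizontal, and since $\Ad^G_{\ph(p)}$ is linear we get $\s_p(v) = \Ad^G_{\ph(p)}(\t_p(v)) = \Ad^G_{\ph(p)}(0) = 0$. The same linearity shows $\s_p$ vanishes wherever $\t_p$ does, so $\supp(\s) \subs \supp(\t)$ and compact support is preserved; thus the only substantial point is $\r_\fg$-invariance.

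The key ingredient I would record first is the commutation identity between $\Ad^G$ and the structure-group automorphisms. Since each $\r_G(h) \in \Aut(G)$ satisfies $L(\r_G(h)) = \r_\fg(h)$, differentiating the relation $\r_G(h) \ci c_g = c_{\r_G(h)(g)} \ci \r_G(h)$ at the identity, where $c_g$ denotes conjugation by $g$, yields
\begin{align*}
\Ad^G_{\r_G(h)(g)} = \r_\fg(h) \ci \Ad^G_g \ci \r_\fg(h)^{-1} \qquad \text{for all } g \in G,\ h \in H.
\end{align*}
Applied with $h$ replaced by $h^{-1}$ and with the equivariance $\ph(ph) = \r_G(h)^{-1}(\ph(p))$ of $\ph \in C^\8_c(P,G)_{\r_G}$, this gives $\Ad^G_{\ph(ph)} = \r_\fg(h)^{-1} \ci \Ad^G_{\ph(p)} \ci \r_\fg(h)$.

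With this in hand the invariance is a short formal computation. For $h \in H$, $p \in P$ and $v \in T_pP$ I would compute
\begin{align*}
\r_\fg(h)\bigl(\s_{ph}(TR_h v)\bigr) &= \r_\fg(h)\bigl(\Ad^G_{\ph(ph)}(\t_{ph}(TR_h v))\bigr)\\
&= \Ad^G_{\ph(p)}\bigl(\r_\fg(h)(\t_{ph}(TR_h v))\bigr) = \Ad^G_{\ph(p)}(\t_p(v)) = \s_p(v),
\end{align*}
where the second equality inserts $\Ad^G_{\ph(ph)} = \r_\fg(h)^{-1} \ci \Ad^G_{\ph(p)} \ci \r_\fg(h)$ and cancels the outer $\r_\fg(h)\ci\r_\fg(h)^{-1}$, and the third uses the invariance $\r_\fg(h)\ci R_h^\ast \t = \t$ of $\t$. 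This is precisely the condition $\r_\fg(h)\ci R_h^\ast \s = \s$, so $\s \in \Omega^1(P,\fg)_{\r_\fg}^\hor$, as claimed.

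I do not expect a genuine obstacle here: the entire content is the automorphism--adjoint commutation identity above, after which the argument is purely pointwise and algebraic. The only place demanding a little care is bookkeeping of inverses — both in the equivariance relation $\ph(ph) = \r_G(h)^{-1}(\ph(p))$ and in applying the identity with $\psi = \r_G(h)^{-1} = \r_G(h^{-1})$ — so that the conjugating factors $\r_\fg(h)^{\pm 1}$ cancel correctly against the invariance of $\t$.
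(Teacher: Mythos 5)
Your argument is correct and is essentially the paper's own proof: both verify $\r_\fg(h)\ci R_h^\ast(\Ad^G_\ph.\t)=\Ad^G_\ph.\t$ pointwise using the equivariance of $\ph$ and $\t$ together with the identity $\Ad^G_{\psi(g)}=L(\psi)\ci\Ad^G_g\ci L(\psi)^{-1}$ for $\psi=\r_G(h^{-1})\in\Aut(G)$, which the paper obtains by unwinding $T_1(\r_G(h^{-1})\ci I_{\ph(p)})$ rather than stating it as a separate identity. Your bookkeeping of inverses checks out, and the horizontality and support observations match the paper's (brief) treatment.
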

\begin{proof}
Given $\t\in \Omega^1_c(P,\fg)_{\r_\fg}$ and $\ph \in C_c^\8(P,G)_{\r_G}$ we show $\mu(\ph, \t)\in \Omega^1_c(P,\fg)_{\r_\fg}$. Let $h\in H$, $p \in P$ and $v\in T_pP$. We calculate
\begin{align*}
&(R_h^\ast \mu (\ph, \t))_p(v) = \Ad^G(\ph(ph), \t_{ph}(TR_h(v))) = \Ad^G(\r_G(h\inv).\ph(p), \r_\fg(h\inv).\t_p(v))\\
=& T\l_{\r_G(h\inv).\ph(p)} \ci T\varrho_{\r_G(h\inv).\ph(p)\inv} \ci T_1\r_G(h\inv)(\t_p(v))\\
=& T_1(\r_G(h\inv)(\ph(p)) \. \r_G(h\inv)(\bl) \. \r_G(h\inv) (\ph(p)\inv) ) (\t_p(v))\\
=& T_1( \r_G(h\inv)\ci I_{\ph(p)} )(\t_p(v)) = \r_\fg(h\inv)\ci \Ad^G_{\ph(p)} (\t_p(v)),
\end{align*}
where $I_{\ph(p)}(g) = \ph(p)g\ph(p)\inv$ is the conjugation on $G$.
Obviously $\mu(\ph, \t)$ is horizontal if $\t$ is so.
\end{proof}

\begin{definition}
We define the map 
\begin{align*}
\Ad^G_\ast \co C^\8_c(P,G)_{\r_G}^\hor \ti \Omega^1_c(P,\fg)_{\r_\fg}^\hor \ra \Omega^1_c(P,\fg)_{\r_\fg}^\hor,~ (\ph,\t) \ra \Ad^G_{\ph}.\t
\end{align*}
with $\Ad^G_{\ph}.\t \co TP \ra \fg,~ v\ms \Ad^G_{\ph\ci \p(v)}.\t(v)$ and $\pi \co TP \ra P$ the canonical projection.
\end{definition}

\begin{lemma}\label{Gruddkdfjhlemmsdg78}
The map $\Ad^G_\ast \co C^\8_c(P,G)_{\r_G} \ti \Omega^1_c(P,\fg)_{\r_\fg}^\hor \ra \Omega^1_c(P,\fg)_{\r_\fg}^\hor$ is continuous linear in the second argument.
\end{lemma}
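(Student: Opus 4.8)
The plan is to establish two things separately: linearity in the second argument, which is pointwise and immediate, and continuity, which is the only real content. Well-definedness of the target is already guaranteed: by Lemma \ref{GruAdInvarianzOmega} the form $\Ad^G_\ph.\t$ is again $\r_\fg$-invariant and horizontal whenever $\t$ is, and since $(\Ad^G_\ph.\t)_p(v) = \Ad^G_{\ph(p)}.\t_p(v)$ vanishes wherever $\t_p$ vanishes, the support of $\Ad^G_\ph.\t$ is contained in that of $\t$; in particular $\Ad^G_\ast$ maps $C^\8_c(P,G)_{\r_G} \ti \Omega^1_K(P,\fg)_{\r_\fg}^\hor$ into $\Omega^1_K(P,\fg)_{\r_\fg}^\hor$ for every compact $K \subs M$. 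For fixed $\ph$ the map $\t \ms \Ad^G_\ph.\t$ is the fibrewise application of the linear map $\Ad^G_{\ph(p)} \in \GL(\fg)$, hence linear.

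For continuity I would use the decomposition
\[
\Ad^G_\ast(\ph,\t) = \t + \Phi(\ph)\cdot \t, \qquad \Phi(\ph) := \Ad^G\ci\ph - \id,
\]
where $\Ad^G \co G \ra \GL(\fg) \subs \End(\fg)$ is the adjoint representation and $\cdot$ denotes the fibrewise action of $\End(\fg)$ on $\fg$-valued forms, $(\Phi(\ph)\cdot\t)_p(v) = \Phi(\ph)(p)(\t_p(v))$. The point of subtracting $\id$ is that $g\ms\Ad^G_g - \id$ is a smooth map $G\ra\End(\fg)$ sending $1_G$ to $0$; since $\ph$ equals $1_G$ outside some $q\inv(K)$, the map $\Phi(\ph)$ lies in $C^\8_c(P,\End(\fg))_{\r_{\End}} \cong \gg_c(M,\End(\frak{G}))$, a genuinely compactly supported section of the endomorphism bundle (the required $H$-invariance under the conjugation representation $\r_{\End}(h).A = \r_\fg(h)\ci A\ci\r_\fg(h)\inv$ is checked exactly as in the proof of Lemma \ref{GruAdInvarianzOmega}). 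Linearity in $\t$ is visible from this formula, and it reduces continuity to the continuity of the two constituent maps.

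The first map $\ph\ms\Phi(\ph)$ is the push-forward of sections by the smooth fibre bundle morphism $\mathcal{G}\ra\End(\frak{G})$ induced fibrewise by $g\ms\Ad^G_g-\id$; as this morphism carries the identity section to the zero section, it induces a map $\gg_c(M,\mathcal{G})\ra\gg_c(M,\End(\frak{G}))$ on compactly supported sections, which is continuous (indeed smooth) by the results on superposition operators underlying the Lie group structure of Definition \ref{GruLieGruppenStruktur} (cf.\ \cite[Chapter 4]{Schuett:2013}, \cite{Alzaareer:2013}). The second map is the pointwise multiplication
\[
\gg_c(M,\End(\frak{G})) \ti \Omega^1_c(M,\frak{G}) \ra \Omega^1_c(M,\frak{G}),\qquad (\Psi,\t)\ms \Psi\cdot\t,
\]
a bilinear map of compactly supported section spaces whose continuity I would verify locally: over a trivialising chart it becomes the pointwise product of a smooth $\End(\fg)$-valued function with a smooth $\Hom(\R^{\dim M},\fg)$-valued function, which is continuous, and one patches these local statements using a locally finite trivialising cover together with the fact that $\supp(\Psi\cdot\t)\subs\supp(\Psi)\cap\supp(\t)$, so that only finitely many charts and a single compact-support step are involved. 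Composing the two maps yields continuity of $\Ad^G_\ast$.

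The main obstacle is precisely this last continuity statement on the inductive-limit (strict (LF)-) spaces: one has to control the interaction between the non-linear dependence on $\ph$ and the direct-limit topology in $\t$, and to make sure the local-to-global patching is compatible with the topologies described in \cite{Gloeckner}. The decomposition $\Ad^G_\ast(\ph,\t)=\t+\Phi(\ph)\cdot\t$ is what tames this, since it isolates all the $\ph$-dependence in a genuinely compactly supported factor $\Phi(\ph)$, reducing everything to a continuous push-forward followed by a continuous bilinear multiplication; if, as in the application through Lemma \ref{GruLemmareresk09834t}, only the continuity of each partial map $\t\ms\Ad^G_\ph.\t$ for fixed $\ph$ is required, the argument is even more direct, being multiplication by the single fixed smooth endomorphism-valued function $\Ad^G\ci\ph$.
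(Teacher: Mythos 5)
Your argument is correct, but it proves more than the lemma asks for and consequently takes a different and heavier route than the paper. The statement only requires that for each fixed $\ph$ the partial map $\t \ms \Ad^G_\ph.\t$ be continuous and linear, and the paper's proof does exactly that: it fixes $\ph$, observes that $\Ad^G_\ast(\ph,\bl)$ is linear and preserves each step $\Omega^1_K(P,\fg)_{\r_\fg}$ of the inductive limit, and then obtains continuity on each Fr{\'e}chet step by embedding $\Omega^1_K(P,\fg)$ into $C^\8(TP,\fg)$ and invoking the continuity of the superposition operator $f_\ast$ associated to the smooth map $f\co TP\ti\fg\ra\fg$, $(v,w)\ms \Ad^G(\ph\ci\p(v),w)$. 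Your decomposition $\Ad^G_\ast(\ph,\t)=\t+\Phi(\ph)\cdot\t$ with $\Phi(\ph)=\Ad^G\ci\ph-\id$ is designed to control the $\ph$-dependence and would yield \emph{joint} continuity; that is a genuinely stronger conclusion, but it is not needed here (Lemma \ref{GruLemmareresk09834t} only consumes continuity and linearity of $\mu(g\inv,\bl)$ for fixed $g$), and it shifts the entire burden onto the joint continuity of the bilinear multiplication $\gg_c(M,\End(\frak{G}))\ti\Omega^1_c(M,\frak{G})\ra\Omega^1_c(M,\frak{G})$ on strict (LF)-spaces — a statement that is true but nontrivial (separate continuity of bilinear maps on LF-spaces does not automatically give joint continuity, and your local patching sketch is precisely where the real work would sit). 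You in fact note at the end that for the partial map the argument reduces to multiplication by the single fixed function $\Ad^G\ci\ph$; that reduced argument is essentially the paper's proof, and I would lead with it rather than treat it as an afterthought.
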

\begin{proof}
\begin{compactenum}
Let $\ph \in C^\8_c(P,G)_{\r_G}$ and $K\subs M$ be compact. It is enough to show that 
\begin{align*}
\Ad^G_\ast(\ph,\bl) \co \Omega^1_K(P,\fg)_{\rho_\fg} \ra \Omega^1_K(P,\fg)_{\rho_\fg}
\end{align*}
is continuous, because $\Ad^G_\ast(\ph,\bl)$ is linear and $\Ad^G_\ast(\ph,\bl)(\Omega^1_K(P,\fg)_{\rho_\fg})\subs \Omega^1_K(P,\fg)_{\rho_\fg}$. 
The map $f\co TP \ti \fg \ra \fg,~ (v,w)\ms \Ad^G(\ph\ci\p(v), w)$ is smooth. From \cite[Lemma 4.3.2]{Glockner:a} we know that
\begin{align*}
f_\ast \co C^\8(TP,\fg) \ra C^\8(TP,\fg),~ \t \ms f\ci (\id,\t)
\end{align*}
is continuous.
We can embed  $\Omega^1_K(P,\fg)$ into $C^\8(TP,\fg)$. Hence we are done.
\end{compactenum}
\end{proof}

\begin{definition}\label{GruLeftLogarithmic}
Let $\p \co TP \ra P$ be the canonical projection and $\pr_h\co TP \ra HP$ the projection onto the horizontal bundle.
\begin{compactenum}
\item We define 
\begin{align*}
\d \co C^\8_c(P,G)_{\r_G} \ra \Omega^1(P,\fg),~ \ph\ms \d(\ph)
\end{align*}
with $\d\ph(v) = T\l_{\ph(\p(v))\inv} \ci T\ph (v)$ for $v\in TP$ (cf. \cite[38.1]{Kriegl:1997}).  
\item We define 
\begin{align*}
\pr_h^\ast \co \Omega^1(P,\fg) \ra \Omega^1(P,\fg)^\hor, \t \ms \t \ci \pr_h
\end{align*}
\end{compactenum}
\end{definition}

The statement (b) in the following lemma is well-known and can be found in \cite[38.1]{Kriegl:1997}.
\begin{lemma}\label{Grukdjllr65rglkdf}
\begin{compactenum}
\item We have 
\begin{align*}
\d(C^\8_c(P,G)_{\r_G}) \subs \Omega^1_c(P,\fg)_{\rho_\fg}
\end{align*}
\item Given $f,g \in C^\8_c(P,G)_{\r_G}$ we have 
\begin{align*}
\d(f\cdot g) = \d(g) + \Ad^G_\ast(g\inv, \d(f))
\end{align*}
\end{compactenum}
\end{lemma}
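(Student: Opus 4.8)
The plan is to treat the two parts separately: (a) is the genuine content (the equivariance of $\d\ph$), while (b) is essentially the familiar cocycle rule for the left logarithmic derivative, which the statement already flags as well-known.

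For (a), I would fix $\ph \in C^\8_c(P,G)_{\r_G}$ and first observe that $\d\ph$ is a smooth $\fg$-valued $1$-form on $P$ (smoothness of $\ph$ and smooth dependence of left translation on its argument) whose support satisfies $\supp(\d\ph) \subs \supp(\ph) \subs q\inv(K)$ for a compact $K \subs M$, so that $\d\ph \in \Omega^1_c(P,\fg)$. It then remains to verify $\r_\fg(h) \ci R_h^\ast \d\ph = \d\ph$ for all $h \in H$. The defining equivariance of $\ph$ rewrites as $\ph \ci R_h = \r_G(h\inv) \ci \ph$, and since $\r_G(h\inv)$ is a Lie group automorphism, left translation commutes with it via $\l_{\r_G(h\inv)(a)} \ci \r_G(h\inv) = \r_G(h\inv) \ci \l_a$. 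Evaluating $(R_h^\ast \d\ph)_p(v) = (\d\ph)_{ph}(TR_h\,v)$ at $v \in T_pP$, applying the chain rule $T\ph \ci TR_h = T\r_G(h\inv) \ci T\ph$, and using the (differentiated) commutation relation with $a = \ph(p)\inv$ together with $\ph(ph)\inv = \r_G(h\inv)(\ph(p)\inv)$, I expect to reach $(R_h^\ast \d\ph)_p(v) = T\r_G(h\inv)\bigl((\d\ph)_p(v)\bigr)$. Since $(\d\ph)_p(v) \in \fg = T_1 G$ and $\r_G(h\inv)$ fixes the identity, $T\r_G(h\inv)$ restricts there to $L(\r_G(h\inv)) = \r_\fg(h\inv)$, giving $R_h^\ast \d\ph = \r_\fg(h\inv) \ci \d\ph$; applying $\r_\fg(h)$ yields the invariance.

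For (b), I would first note that both sides lie in $\Omega^1_c(P,\fg)_{\r_\fg}$: the left-hand side and the summand $\d(g)$ by part (a), and the summand $\Ad^G_\ast(g\inv,\d f)$ by part (a) together with Lemma \ref{GruAdInvarianzOmega}, which shows the adjoint action preserves $\Omega^1_c(P,\fg)_{\r_\fg}$. The identity is the standard cocycle property of the left logarithmic derivative and may be cited from \cite[38.1]{Kriegl:1997}; alternatively I would verify it pointwise. Writing $f\cdot g = m \ci (f,g)$ with $m$ the multiplication of $G$ and evaluating at $v \in T_pP$, I would use $T_{(a,b)}m(X,Y) = T\rho_b(X) + T\l_a(Y)$ with $a=f(p)$, $b=g(p)$, and the relation $\Ad^G_{g(p)\inv} = T\l_{g(p)\inv} \ci T\rho_{g(p)}$ (conjugation factoring through commuting translations). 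The contribution of $Tf(v)$ then collapses to $\Ad^G_{g(p)\inv}\bigl(\d f(v)\bigr)$ and that of $Tg(v)$ to $\d g(v)$, producing exactly $\d(f\cdot g)(v) = \d g(v) + \Ad^G_\ast(g\inv,\d f)(v)$.

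None of this is hard; the only points demanding care are the commutation of left translation past the automorphism $\r_G(h\inv)$ in (a), and, in (b), reading $\Ad^G_\ast(g\inv,\d f)$ as the adjoint action on the not-necessarily-horizontal invariant form $\d f \in \Omega^1_c(P,\fg)_{\r_\fg}$ (as in Lemma \ref{GruAdInvarianzOmega}) rather than on the horizontal subspace of Lemma \ref{Gruddkdfjhlemmsdg78}.
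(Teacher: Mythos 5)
Your proposal is correct and follows essentially the same route as the paper: part (a) is the elementary computation showing $R_h^\ast\d\ph=\r_\fg(h\inv)\ci\d\ph$ via the equivariance $\ph\ci R_h=\r_G(h\inv)\ci\ph$ and the fact that the automorphism $\r_G(h\inv)$ commutes with left translations (the paper performs this at the level of point maps before differentiating, which is only a cosmetic difference), and part (b) is cited from \cite[38.1]{Kriegl:1997} exactly as in the paper. Your remark that $\Ad^G_\ast(g\inv,\d f)$ must be read as the action of Lemma \ref{GruAdInvarianzOmega} on not-necessarily-horizontal invariant forms is a correct and worthwhile clarification.
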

\begin{proof}
\begin{compactenum}
\item 
Let $\ph \in C_c^\8(P,G)_{\rho_G}$, $h \in H$, $p \in P$ and $w\in T_pP$.  We calculate
\begin{align*}
&(R_h^\ast \d(\ph))_p(w) = \d(\ph)_{ph}(TR_h(w)) = T\l_{\ph(ph)\inv} (T\ph (TR_h(w))) \\
=& T(\l_{\ph(ph)\inv} \ci \ph \ci R_h)(w) =:\dagger
\end{align*}
For $x \in P$ we have 
\begin{align*}
&\l_{\ph(ph)\inv} \ci \ph \ci R_h(x) = (\rho_G(h\inv). \ph(p))\inv  \cdot (\rho_G(h\inv). \ph(x)) \\
=& \rho_G(h\inv). (\ph(p)\inv \cdot \ph(x)) 
= \rho_G(h\inv) \ci \l_{\ph(p)\inv} \ci \ph (x)
\end{align*}
We conclude 
\begin{align*}
\dagger = \rho_\fg(h\inv)\ci T\l_{\ph(p)\inv} \ci T\ph(w) = \rho_\fg(h\inv)\ci \d(\ph)_p(w).
\end{align*}
\item The assertion follows directly from \cite[38.1]{Kriegl:1997}.
\end{compactenum}
\end{proof}

\begin{definition}
Let $\pr_h\co TP=VP\oplus HP \ra HP$, be the projection onto the horizontal bundle $HP$. We define 
\begin{align*}
\Delta\co  C_c^\8(P,G)_{\rho_G} \ra \Omega^1(P,\fg)_{\rho_\fg}^\hor, \ph \ms \pr^\ast_h \ci \d(\ph) = \d(\ph)\ci \pr_h.
\end{align*}
\end{definition}

As in \cite{Schuett:2013} we use the concept of weak products of infinite-dimensional Lie groups described in \cite[Section 7]{Glockner:2003} respectively \cite[Section 4]{Glockner:2007} in the following considerations. The following lemma is basically \cite[Corrolary 2.38]{Schuett:2013}, but with modified assumptions.
\begin{lemma}\label{Grudkdkdlsjlf098}
For $i \in \N$ let $G_i$ be a locally convex Lie group, $E_i$ a locally convex space and $f_i \co G_i \ra E_i$ be a smooth map such that $f_i(1)=0$. In this situation there exists an open $1$-neighbourhood $U\subs \prod^\ast_{i\in \N} G_i$ such that the map $f\co \prod^\ast_{i\in \N} G_i \ra \bigoplus_{i\in \N} E_i$, $(g_i)_i \ms (f_i(g_i))_{i}$ is smooth on $U$. 
\end{lemma}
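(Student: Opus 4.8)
The plan is to reduce the assertion, by means of product charts, to a statement about a ``diagonal'' map on a locally convex direct sum and then to argue as in \cite[Corollary 2.38]{Schuett:2013}. For each $i\in\N$ I would fix a chart $\kappa_i\co U_i \ra \Omega_i \subs \fg_i$ of $G_i$ around the identity with $\kappa_i(1)=0$, where $\fg_i$ is the modelling Lie algebra of $G_i$ and $\Omega_i$ is an open $0$-neighbourhood. By the construction of the weak direct product in \cite[Section 7]{Glockner:2003} (respectively \cite[Section 4]{Glockner:2007}) the set $U:=\prod^\ast_{i\in\N}U_i$ is an open $1$-neighbourhood of $\prod^\ast_{i\in\N}G_i$ and $\prod^\ast_{i\in\N}\kappa_i$ is a diffeomorphism of $U$ onto the box $\bigoplus_{i\in\N}\Omega_i$, which is open in the locally convex direct sum $\bigoplus_{i\in\N}\fg_i$ (here one uses that boxes of open absolutely convex $0$-neighbourhoods form a basis of $0$-neighbourhoods of a locally convex direct sum). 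Setting $\til f_i:=f_i\ci\kappa_i\inv\co\Omega_i\ra E_i$, each $\til f_i$ is smooth with $\til f_i(0)=0$, and in these charts $f|_U$ becomes the map
\begin{align*}
F\co \bigoplus_{i\in\N}\Omega_i \ra \bigoplus_{i\in\N}E_i,\quad (x_i)_i \ms (\til f_i(x_i))_i.
\end{align*}
Hence it suffices to prove that $F$ is smooth; note that the hypothesis $f_i(1)=0$ is exactly what ensures that $F$ is direct-sum-valued, since for a finitely supported $(x_i)_i$ one has $\til f_i(x_i)=\til f_i(0)=0$ off the support.

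Next I would show by induction on $k$ that the iterated directional derivatives of $F$ exist and are given by the diagonal formula
\begin{align*}
d^kF\bigl((x_i)_i;(h^1_i)_i,\dots,(h^k_i)_i\bigr)=\bigl(d^k\til f_i(x_i;h^1_i,\dots,h^k_i)\bigr)_i .
\end{align*}
The existence and this formula are in fact easy: if $S$ denotes the (finite) combined support of the base point and the chosen direction vectors, then for every index $i\notin S$ all relevant components vanish, so the corresponding difference quotients and their limits vanish as well. Consequently every difference quotient occurring in the computation of $d^kF$ is supported in the fixed finite set $S$, where convergence in $\bigoplus_{i\in\N}E_i$ reduces to componentwise convergence in the finite product $\bigoplus_{i\in S}E_i$; this is supplied by the existence of the $d^k\til f_i$. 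In particular the right-hand side above has finite support and lies in $\bigoplus_{i\in\N}E_i$.

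The main obstacle is the remaining point, namely the joint continuity of each
\begin{align*}
d^kF\co \Bigl(\bigoplus_{i\in\N}\Omega_i\Bigr)\ti\Bigl(\bigoplus_{i\in\N}\fg_i\Bigr)^{k}\ra \bigoplus_{i\in\N}E_i .
\end{align*}
Here I would again exploit that boxes form a $0$-neighbourhood basis of a locally convex direct sum. To test continuity at a point whose combined support is a finite set $S$, one prescribes a basic box neighbourhood $\bigoplus_{i}V_i$ of the value: for the finitely many indices $i\in S$ the continuity of $d^k\til f_i$ provides suitable component neighbourhoods, while for every $i\notin S$ one uses $d^k\til f_i(0;0,\dots,0)=0$ together with the continuity of $d^k\til f_i$ at the origin to find $0$-neighbourhoods mapped into $V_i$. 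The box of all these component neighbourhoods is a neighbourhood in the direct sum (because only finitely many of them, those with $i\in S$, fail to be $0$-neighbourhoods), and by construction $d^kF$ carries it into the prescribed box. This establishes the continuity of all $d^kF$, hence the smoothness of $F$ and therefore of $f|_U$. This is exactly the argument of \cite[Corollary 2.38]{Schuett:2013}, now carried out for arbitrary smooth maps $f_i$ with $f_i(1)=0$ in place of the maps treated there, the only change being that smoothness is obtained on the $1$-neighbourhood $U$ rather than globally.
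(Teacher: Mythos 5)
Your argument is correct, and its first step --- transporting $f$ via the product chart $(\ph_i)_{i\in\N}\co\prod^\ast_{i\in\N}U_i\ra\bigoplus_{i\in\N}V_i$ into the diagonal map $(x_i)_i\ms(f_i\ci\ph_i\inv(x_i))_i$ between open subsets of locally convex direct sums --- is exactly the reduction the paper performs. The only divergence is in how that diagonal map is then handled: the paper simply cites \cite[Proposition 7.1]{Glockner:2003}, which asserts precisely that a map $\bigoplus_{i}V_i\ra\bigoplus_{i}E_i$, $(x_i)_i\ms(g_i(x_i))_i$ with all $g_i$ smooth and $g_i(0)=0$ is smooth, whereas you reprove this statement from scratch by computing the iterated directional derivatives componentwise and verifying their continuity against box neighbourhoods. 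Your direct argument is sound for the countable index set at hand, where the locally convex direct sum topology coincides with the box topology, so your boxes of open sets are indeed open and the boxes of $0$-neighbourhoods form a basis; the componentwise computation of $d^kF$ is legitimate because difference quotients with finitely supported data stay in a fixed finite subsum, on which convergence is componentwise. In effect you have written out the proof of the cited proposition in the spirit of \cite[Corollary 2.38]{Schuett:2013}, as you note. What the citation buys the paper is brevity; what your expansion buys is self-containedness and an explicit record of where the hypothesis $f_i(1)=0$ enters, namely in keeping all values and difference quotients finitely supported. Either way the lemma is established.
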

\begin{proof}
Given $i \in \N$ let $\ph_i \co U_i \subs G_i \ra V_i \subs \fg_i$ be a chart around $1$ with $\ph_i(1)=0$. We have the commutative diagram
\begin{align*}
\begin{xy}\xymatrixcolsep{9pc}
\xymatrix{
\prod^\ast_{i\in \N}U_i \ar[r]^-{f|_{\prod^\ast_{i\in \N}U_i} = (f_i|_{U_i})_{i\in \N}} \ar[d]_-{(\ph_i)_{i\in \N}} & \bigoplus_{i\in \N} E_i \\
\bigoplus_{i\in \N} V_i \ar[ur]_-{(f_i\ci \ph_i\inv)_{i\in\N}}.
}
\end{xy}
\end{align*}
Now the assertion follows form \cite[Proposition 7.1]{Glockner:2003}.
\end{proof}

\begin{remark}\label{Gruruckdsld}
Given a compact locally finite  trivializing system $(\ol{V_i},\sigma_i)_{i\in \N}$  of  the principal bundle $H\hra P \xra{q}M$ in the sense of \cite[Definition 3.6]{Schuett:2013} respectively \cite{Wockel:2007}. We follow \cite[Remark 3.5]{Schuett:2013} respectively \cite{Wockel:2007} and define as usually the smooth map $\beta_{\sigma_i}\co q\inv(\ol{V}_i) \ra H$  by the equation $\sigma_i(q(p)) \cdot \beta_{\sigma_i}(p)= p$ for all $p \in q\inv(\ol{V}_i)$. Obviously we have $\beta_{\s_i}(ph)= \b_{\s_i}(p)\cdot h$ for all $h\in H$. Moreover we define the smooth cocycle $\beta_{i,j}\co \ol{V}_i\cap \ol{V}_j \ra H$ by the equation $\sigma_i(x)\cdot \b_{i,j}(x) = \s_j(x)$. We have $\b_{i,j}(x)\inv = \b_{j,i}(x)$ and $\b_{\s_i}(p)\inv\cdot \b_{i,j}(q(p)) =  \b_{\s_j}(p)\inv$ for $p \in q\inv(\ol{V}_i\cap \ol{V}_j)$.
\end{remark}

The proof of the following lemma is similar to the proof of \cite[Proposition 4.6]{Schuett:2013}, where beside other results Sch{\"u}tt, constructed a topological embedding from the compactly supported gauge algebra $\mathrm{gau}_c(P,\fg)_\fg$ into a direct sum $\bigoplus_{i\in\N}C^\8(\ol{V}_i,\fg)$ of locally convex spaces. However the following lemma differers from \cite[Proposition 4.6]{Schuett:2013}, because we deal with horizontal differential forms and these need some extra considerations.
\begin{lemma}\label{Grukljgdfljhg}
Let $(\ol{V_i},\sigma_i)_{i\in \N}$ be a compact locally finite trivializing system in the sense of \cite[Definition 3.6]{Schuett:2013}. The map 
\begin{align*}
\Omega_c^1(P,\fg)_{\rho_\fg}^\hor \ra \bigoplus_{i\in \N} \Omega^1(\ol{V}_i,\fg),~ \t \ms (\sigma_i^\ast \t)_{i\in \N} 
\end{align*}
is a topological embedding.
\end{lemma}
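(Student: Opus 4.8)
The plan is to realise the map $\Phi\co \Omega_c^1(P,\fg)_{\rho_\fg}^\hor\ra\bigoplus_{i\in\N}\Omega^1(\ol V_i,\fg)$, $\tau\ms(\s_i^\ast\tau)_{i\in\N}$, as a continuous linear injection admitting a \emph{continuous linear left inverse}; this is precisely the assertion that $\Phi$ is a topological embedding. This mirrors the function case \cite[Proposition 4.6]{Schuett:2013}, the only genuinely new ingredient being a reconstruction formula that accounts for horizontality. First I would check that $\Phi$ is well defined and continuous. If $\tau\in\Omega_K^1(P,\fg)_{\rho_\fg}^\hor$ (so $\supp\tau\subs q\inv(K)$ for compact $K\subs M$), then $\supp(\s_i^\ast\tau)\subs\s_i\inv(\supp\tau)\subs\ol V_i\cap K$, and by local finiteness $\s_i^\ast\tau=0$ for all but the finitely many $i$ with $\ol V_i\cap K\neq\emptyset$; hence $\Phi(\tau)$ lands in the direct sum. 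On the step $\Omega_K^1(P,\fg)_{\rho_\fg}^\hor$ each component $\tau\ms\s_i^\ast\tau$ is continuous, only finitely many are nonzero, and on a finite partial sum the direct-sum topology agrees with the product topology; so $\Phi$ is continuous on each step, and continuous by the universal property of the locally convex inductive limit.

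The key step is reconstruction. Writing $h:=\b_{\s_i}(p)$, so that $p=\s_i(q(p))\.h$, I would prove that for every $\tau\in\Omega_c^1(P,\fg)_{\rho_\fg}^\hor$, every $p$ with $q(p)\in\ol V_i$ and every $v\in T_pP$,
\[
\tau_p(v)=\rho_\fg\bigl(\b_{\s_i}(p)\inv\bigr)\,(\s_i^\ast\tau)_{q(p)}\bigl(Tq(v)\bigr).
\]
This follows from three facts: horizontality gives $\tau_p(v)=\tau_p(\pr_h v)$ and $(\s_i^\ast\tau)_x(w)=\tau_{\s_i(x)}(\pr_h T\s_i(w))$, with $\pr_h T\s_i(w)$ the horizontal lift at $\s_i(x)$ of $w$; invariance of the connection under $R_h$ identifies $\pr_h v=TR_h\tilde v$, where $\tilde v$ is the horizontal lift of $Tq(v)$ at $\s_i(q(p))$; and the defining relation $\tau_{ph}(TR_h\,\cdot)=\rho_\fg(h\inv)\tau_p(\cdot)$ then yields the formula. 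In particular $\s_i^\ast\tau=0$ for all $i$ forces $\tau=0$, so $\Phi$ is injective.

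For the retraction I would fix a smooth partition of unity $(\chi_i)_{i\in\N}$ subordinate to the locally finite open cover $(V_i)$ and define, for a finitely supported family $(\theta_i)_i$,
\[
\Psi\bigl((\theta_i)_i\bigr)_p(v)=\sum_i\chi_i(q(p))\,\rho_\fg\bigl(\b_{\s_i}(p)\inv\bigr)\,(\theta_i)_{q(p)}\bigl(Tq(v)\bigr),
\]
each summand read as $0$ off $q\inv(\supp\chi_i)\subs q\inv(V_i)$. Using $\b_{\s_i}(ph)=\b_{\s_i}(p)h$ (Remark \ref{Gruruckdsld}) and $q\ci R_h=q$, a direct computation as in the second paragraph shows each summand is $\rho_\fg$-invariant and horizontal, so $\Psi((\theta_i)_i)\in\Omega_c^1(P,\fg)_{\rho_\fg}^\hor$, the cutoff by $\chi_i\ci q$ ensuring smoothness and compact support over the base. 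Restricted to a finite partial sum, $\Psi$ takes values in the fixed step $\Omega_{K_n}^1(P,\fg)_{\rho_\fg}^\hor$ with $K_n=\bigcup_{i\le n}\ol V_i$, where it is continuous as a finite sum of multiplication and superposition operators of the type treated in \cite[Lemma 4.3.2]{Glockner:a} (cf.\ the proof of Lemma \ref{Gruddkdfjhlemmsdg78}); hence $\Psi$ is continuous by the universal property of the direct sum (cf.\ \cite[Proposition 7.1]{Glockner:2003}). Finally the reconstruction formula gives $\Psi(\Phi(\tau))_p(v)=\sum_i\chi_i(q(p))\,\tau_p(v)=\tau_p(v)$, i.e.\ $\Psi\ci\Phi=\id$. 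As $\Phi,\Psi$ are continuous and $\Psi\ci\Phi=\id$, the map $\Phi$ is a topological embedding.

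The main obstacle I expect is the retraction, not the embedding formalism: verifying that each reconstructed summand is genuinely smooth across the support boundary of $\chi_i$ and simultaneously $\rho_\fg$-invariant and horizontal — this is exactly where horizontality and the cocycle identity $\b_{\s_i}(ph)=\b_{\s_i}(p)h$ must interact correctly with the principal connection — and then upgrading continuity of $\Psi$ from the individual Fréchet steps to the inductive limit. By contrast the continuity and injectivity of $\Phi$ are comparatively routine.
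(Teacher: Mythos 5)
Your proposal is correct and follows essentially the same route as the paper: continuity of $\Phi$ via the inductive limit and local finiteness, a retraction $\Psi$ built from a subordinate partition of unity and the formula $(\theta_i)\mapsto\sum_i\chi_i(q(p))\,\rho_\fg(\b_{\s_i}(p)\inv)(\theta_i)_{q(p)}(Tq(\cdot))$, and the identity $\Psi\ci\Phi=\id$ established by the horizontality argument showing $v-TR_{\b_{\s_i}(p)}T\s_iTq(v)$ is vertical. The only difference is that the paper additionally identifies the image as the compatible subspace $\Omega_\oplus$ and checks $\Phi\ci\Psi=\id$ there, which you correctly observe is not needed for the embedding claim.
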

\begin{proof}
We define 
\begin{align*}
\Omega_\oplus:=\set{(\eta_i)_i \in \bigoplus_{i\in\N} \Omega^1(\ol{V}_i,\fg): (\eta_i)_x = \rho_\fg(\b_{i,j}(x))\ci  (\eta_j)_x ~\mathrm{for
}~ x\in \ol{V}_i \cap \ol{V}_j } 
\end{align*}
and the map 
\begin{align*}
\Phi\co \Omega_c^1(P,\fg)_{\rho_\fg}^\hor \ra \bigoplus_{i\in \N} \Omega^1(\ol{V}_i,\fg),~ \t \ms (\sigma_i^\ast \t)_{i\in \N}.
\end{align*}
Lets show $\im(\Phi)\subs \Omega_\oplus$. For $x \in \ol{V}_i\cap \ol{V}_j, v \in T_xM$ we have $\s_j(x)=\s_i(x)\b_{i,j}(x)$ and 
\begin{align}\label{Grunbxml121}
Tq(T\s_j(v) - T(R_{\b_{i,j}(x)} \ci\s_i)(v)) = T(q\ci \s_j)(v) - T(q\ci R_{\b_{i,j}(x)} \ci \s_i)(v)  =0. 
\end{align}
Because $\t$ is $\rho_\fg$ invariant and horizontal we can calculate 
\begin{align*}
&(\s_i^\ast \t)_x(v) = \t_{\s_i(x)}(T\s_i(v)) = \rho_\fg(\b_{i,j}(x))\ci \t_{\s_i(x)\b_{i,j}(x)} (TR_{\b_{i,j}(x)}(T\s_i(v)))\\
\ub{=}{(\ref{Grunbxml121})}& \rho_\fg(\b_{i,j}(x))\ci \t_{\s_j(x)} (T\s_j(v)) = \rho_\fg(\b_{i,j}(x))\ci \s_j^\ast \t_{x} (v).
\end{align*}
Analogous to \cite[Proposition 4.6]{Schuett:2013} we can argue as followed: The map $\Phi$ is linear, $\Omega^1_c(P,\fg)_{\rho_{\fg}}^\hor = \varinjlim \Omega_K^1(P,\fg)_{\rho_\fg}^\hor$ and $(\ol{V}_i)_i$ is locally finite, hence the map $\Phi$ is continuous.
Now let $(\l_i)_{i\in\N}$ be a partition of the unity of $M$ that is subordinated to the cover $(V_i)_i$. Given $\e\in\Omega^1(\ol{V}_i,\fg)$ we define $\wtil{\l_i\e} \in \Omega^1(P,\fg)$ by
\begin{align*}
\wtil{\l_i\e}_p(w):=
\begin{cases}
\l_i(q(p))\cdot \rho_\fg(\b_{\s_i}(p)\inv). \e_{q(p)}(Tq(w)) &: p \in q\inv(V_i)\\
0&: ~\mathrm{else}.
\end{cases}
\end{align*}
With Remark \ref{Gruruckdsld} we get $\wtil{\l_i\e}\in \Omega^1_{\supp(\l_i)}(P,\fg)_{\rho_\fg}^\hor$ and $\sum_{i\in\N} \wtil{\l_i\e_i}\in\Omega^1_c(P,\fg)_{\rho_\fg}^\hor$ for $(\e_i)_i \in \bigoplus_{i\in \N} \Omega^1(\ol{V}_i,\fg)$. The map $\Psi\co  \bigoplus_{i\in \N} \Omega^1(\ol{V}_i,\fg) \ra \Omega^1_c(P,\fg)_{\rho_\fg}^\hor$, $(\e_i)_i \ms \sum_{i\in\N}\til{\l_i\e_i}$ is continuous, because it is linear and the inclusions $\Omega^1_{\supp(\l_i)}(P,\fg)_{\rho_\fg}^\hor \hra \Omega^1_c(P,\fg)_{\rho_\fg}^\hor$ are continuous.
Let $(\e_i)_i \in \Omega_\oplus$. As in \cite[Proposition 4.6]{Schuett:2013} we get 
\begin{align*}
\Psi((\e_i)_i)_p(w) = \rho_\fg(\b_{\s_{i_0}}(p)\inv ). (\e_{i_0})_{q(p)} (Tq(w))
\end{align*}
if $p \in q\inv(V_{i_0})$ and $w\in T_pP$. In abuse of notation we write $\Phi:=\Phi|^{\Omega_\oplus}$ and $\Psi:=\Psi|_{\Omega_\oplus}$. One easely sees $\Phi\ci\Psi=\id_{\Omega_\oplus}$. It is left to show $\Psi\ci\Phi=\id_{\Omega_c^1(P,\fg)_{\rho_\fg}^\hor}$. Let $\t\in \Omega_c^1(P,\fg)_{\rho_\fg}^\hor$, $p \in P$, $w\in T_pP$ and $i \in\N$ with $p \in q\inv(V_i)$. We calculate
\begin{align}\label{Gru92023}
Tq(w- TR_{\b_{\s_i}(p)} T\s_i Tq(w)) = Tq(w) - T(q\ci R_{\b_{\s_i}(p)} \ci \s_i )(Tq(w)) =0.
\end{align}
Now we get
\begin{align*}
&\Psi\ci\Phi(\t)_p (w) = \rho_\fg(\b_{\s_i}(p)\inv).(\s_i^\ast \t)_{q(p)}(Tq(w))\\
=&\rho_\fg(\b_{\s_i}(p)\inv). \t_{\s_i(q(p))}(T\s_iTq(w))\\
=&\rho_\fg(\b_{\s_i}(p)\inv). \rho_\fg(\b_{\s_i}(p)) \t_{\s_i(q(p))\b_{\s_i}(p)}(TR_{\b_{\s_i}(p)}T\s_iTq(w)) \\
\ub{=}{\ref{Gru92023}}& \t_{\s_i(q(p))\b_{\s_i}(p)}(w) = \t_p(w).
\end{align*}
\end{proof}

\begin{remark}\label{GruPunkteTrennenene}
Let $M$ be a $m$-dimensional manifold, $D\subs TM$ a $d$-dimensional subbundle, $p_0 \in M$ and $w_0 \in D_{p_0}$. Then there exists a smooth curve $\g\co [-1,1] \ra M$ such that $\g(0)=p_0$, $\g'(0)=w_0$ and $\g'(t)\in D_{\g(t)}$ for all $t\in [-1,1]$. In fact  let $\ps\co TU \ra U\ti \R^m$ be a trivialisation with $\ps(D)=U\ti \R^d\ti\set{0}$, $v_0:=\pr_2\ci \ps(w_0) \in \R^d\ti\set{0}$. Then $X\co U \ra TU$, $x\ms \ps\inv(x,v_0)$ is a smooth vector field of $U$ and $\im(X)\subs D$. Let $\til{\g}\co [-\ep,\ep] \ra U$ be the integral curve of $X$ with $\til{\g}(0)=p_0$. Then $\til{\g}'(0)= X(p_0) = w_0$ and obviously $\g'(t) \in D_{\g(t)}$ for all $t$. Now let $\ph \co [-1, 1] \ra [-\ep,\ep]$ be a diffeomorphism with $\ph(0)=0$ and $\ph'(0)=1$. Then $\g:=\til{\g}\ci \ph$ is as needed.
\end{remark}

\begin{lemma}\label{GruPullsepPoints}
The pullbacks $\g^\ast \co \Omega^1(P,\fg)^\hor\ra C^\8([-1,1],\fg)$, $\t \ms \g^\ast \t$ along horizontal maps $\g \co [-1,1]\ra P$ ($\g'(t)\in H_{\g(t)}P$) separate the points in $\Omega^1(P,\fg)^\hor$.
\end{lemma}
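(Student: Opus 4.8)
The plan is to reduce the statement, by linearity of the assignment $\tau\ms\gamma^\ast\tau$, to the following: every nonzero $\tau\in\Omega^1(P,\fg)^\hor$ admits a horizontal curve $\gamma\co[-1,1]\ra P$ with $\gamma^\ast\tau\neq 0$. Here I identify $\gamma^\ast\tau\in C^\8([-1,1],\fg)$ with the function $t\ms \tau_{\gamma(t)}(\gamma'(t))$, so that $(\gamma^\ast\tau)(t_0)=\tau_{\gamma(t_0)}(\gamma'(t_0))$.

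First I would produce a \emph{horizontal} witness for $\tau$. Since $\tau\neq 0$, there are $p_0\in P$ and $v\in T_{p_0}P$ with $\tau_{p_0}(v)\neq 0$. Decomposing $v=\pr_h(v)+\big(v-\pr_h(v)\big)$ into its horizontal and vertical parts and using that $\tau$ is horizontal (hence annihilates the vertical part $v-\pr_h(v)\in V_{p_0}P$), I obtain $\tau_{p_0}(w_0)\neq 0$ for the horizontal vector $w_0:=\pr_h(v)\in H_{p_0}P$.

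Now I would apply Remark \ref{GruPunkteTrennenene} to the finite-dimensional manifold $P$, the horizontal subbundle $D:=HP\subs TP$, the point $p_0$ and the vector $w_0\in H_{p_0}P$. This yields a smooth curve $\gamma\co[-1,1]\ra P$ with $\gamma(0)=p_0$, $\gamma'(0)=w_0$ and $\gamma'(t)\in H_{\gamma(t)}P$ for all $t\in[-1,1]$, that is, a horizontal map in the sense of the lemma. Evaluating the pullback at $0$ gives $(\gamma^\ast\tau)(0)=\tau_{\gamma(0)}(\gamma'(0))=\tau_{p_0}(w_0)\neq 0$, so $\gamma^\ast\tau\neq 0$, as required.

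The argument is short because the essential geometric content---integrating a prescribed horizontal tangent vector to a curve that stays horizontal---has already been isolated in Remark \ref{GruPunkteTrennenene}. The only step that genuinely uses the hypothesis $\tau\in\Omega^1(P,\fg)^\hor$ (rather than an arbitrary $1$-form) is the passage from an arbitrary witness $v$ to a horizontal witness $w_0$ via the horizontal projection $\pr_h$, and I do not expect any real obstacle beyond this bookkeeping.
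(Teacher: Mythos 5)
Your proposal is correct and is essentially the paper's own argument in contrapositive form: both reduce to a horizontal tangent vector using that $\tau$ annihilates the vertical part, invoke Remark \ref{GruPunkteTrennenene} to integrate it to a horizontal curve, and evaluate the pullback at $0$.
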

\begin{proof}
Let $\t \in \Omega^1(P,\fg)^\hor$ and $\g^\ast\t=0$ for all horizontal curves $\g\co [-1,1] \ra P$. Let $p \in P$ and $w\in T_pP$. We show $\t_p(w)=0$. Because $\t$ is horizontal we can assume $w \in H_pP$. We use Remark \ref{GruPunkteTrennenene} and find a horizontal curve $\g \co [-1,1]\ra P$ with $\g'(0)=w$. Hence $\t_p(w) = \t_p(\g'(0)) = \g^\ast\t(0) =0$.
\end{proof}

One can easily deduce the following easy observation in Remark \ref{GruMussdochnichtsein} from \cite[Theorem 1.11]{Wockel:2007}, but in the special case of a current group on a compact interval the argumentation from \cite[Theorem 1.11]{Wockel:2007} becomes much easier.
\begin{remark}\label{GruMussdochnichtsein}
Let $(U_i)_{i=1,\dots,n}$ be an open cover of the space $[-1,1]$ such that the sets $\ol{U}_i$ are submanifolds with boundary and $G$ be a finite-dimensional Lie group. Then the map $\Phi \co C^\8([-1,1],G) \ra \prod_{i=1}^n C^\8(\ol{U}_i,G)$, $\phi \ms (\phi|_{\ol{U}_i})_i$ is an injective Lie group morphism thats image is a sub Lie group of $\prod_{i=1}^n C^\8(\ol{U}_i,G)$ and $\Phi|^{\im(\Phi)}$ is an isomorphism of Lie groups. We define $\Psi \co C^\8([-1,1], \fg)\ra \prod_{i=1}^nC^\8(\ol{U}_i,\fg)$, $f\ms (f|_{\ol{U}_i})_i$. Let $\exp\co  V_\fg \subs \fg \ra U_G\subs G$ be the exponential function of $G$ restricted to a $0$-neighbourhood such that it is a diffeomorphism. We define the open sets $\mathcal{U}:= C^\8([-1,1],U_G)\subs C^\8([-1,1],G)$ and $\mathcal{V}:=C^\8([-1,1],V_\fg)\subs C^\8([-1,1],\fg)$. Let $\tau_1\co C^\8([-1,1],U_G) \ra C^\8([-1,1],V_\fg)$, $\ph \ms (\exp|_{V_\fg}^{U_G})\inv \ci\ph$ and $\tau_2\co \prod_{i=1}^n C^\8(\ol{U}_i,U_G) \ra \prod_{i=1}^nC^\8(\ol{U}_i,V_\fg)$, $(\ph_i)_i \ms ((\exp|_{V_\fg}^{U_G})\inv \ci\ph_i)_i$ be the canonical charts. We obtain the commutative diagram
\begin{align*}
\begin{xy}\xymatrixcolsep{5pc}
\xymatrix{
C^\8([-1,1],U_G) \ar[r]^-{\Phi|_{\mathcal{U}}} \ar[d]^{\tau_1} & \prod_{i=1}^nC^\8(\ol{U}_i,U_G)\ar[d]^-{\tau_2}\\
C^\8([-1,1],V_\fg)  \ar[r]^-{\Psi|_{\mathcal{V}}}& \prod_{i=1}^nC^\8(\ol{U}_i,V_\fg)
}
\end{xy}
\end{align*}
and calculate
\begin{align*}
&\tau_2\left(\im(\Phi) \cap \prod_{i=1}^nC^\8(\ol{U}_i,U_G)\right) = \tau_2\left( \Phi(C^\8([-1,1],U_G)) \right)\\
=& \Psi(C^\8([-1,1],V_\fg)) = \im(\Psi)\cap \prod_{i=1}^nC^\8(\ol{U}_i,V_\fg)
\end{align*}
The space 
\begin{align*}
\im(\Psi) = \set{(f_i)_i: f_i(x)=f_j(x) ~\mathrm{for}~ x\in \ol{U}_i\cap\ol{U}_j}
\end{align*} 
is closed in $\prod_{i=1}^nC^\8(\ol{U}_i,\fg)$. Hence $\im(\Phi)$ is a sub Lie group of $\prod_{i=1}^nC^\8(\ol{U}_i,G)$. 
In the commutative diagram
\begin{align*}
\begin{xy}\xymatrixcolsep{5pc}
\xymatrix{
C^\8([-1,1],U_G) \ar[r]^-{\Phi|_{\mathcal{U}}} \ar[d]^{\tau_1} & \im(\Phi) \cap \prod_{i=1}^nC^\8(\ol{U}_i,U_G)\ar[d]^-{\tau_2}\\
C^\8([-1,1],V_\fg)  \ar[r]^-{\Psi|_{\mathcal{V}}}& \im(\Psi)\cap \prod_{i=1}^nC^\8(\ol{U}_i,V_\fg)
}
\end{xy}
\end{align*}
the lower vertical arrow is a diffeomorphism because $\Psi \co C^\8([-1,1],\fg) \ra \im(\Psi)$ is a continuous bijective linear map between Fr{\'e}chet spaces. Now the assertion follows.
\end{remark}

The following theorem is a generalisation of \cite[Proposition V.7]{Neeb:2004} in the case of a finite-dimensional codomain.
\begin{theorem}\label{Grunctandk}
\begin{compactenum}
\item The  map $\Delta \co C_c^\8(P,G)_{\rho_G} \ra \Omega_c^1(P,\fg)_{\rho_\fg}^\hor$ is smooth.
\item For the smooth map $\Delta \co C_c^\8(P,G)_{\rho_G} \ra \Omega_c^1(P,\fg)_{\rho_\fg}^\hor$ we have $d_1\Delta(f)=D_{\rho_\fg}f$.\footnote{To show this equation we use Lemma \ref{GruPullsepPoints}. If $H$ is infinite-dimensional, then  we can not apply Lemma \ref{GruPullsepPoints}. Instead on can show that $\d \co C_c^\8(P,G)_{\rho_G} \ra \Omega_c^1(P,\fg)$ is smooth and $d_1\d(f)=df$ (with an argument similar to the argumentation in this proof, because pullbacks by smooth maps (not nessesary horizontal) speperate the points in $\Omega_c^1(P,\fg)$). Obviously $\pr_h^\ast$ is linear. Because $\Delta \co C_c^\8(P,G)_{\rho_G} \ra \Omega_c^1(P,\fg)_{\rho_\fg}^\hor$ is smooth also $\Delta = \pr^\ast\ci \d \co C_c^\8(P,G)_{\rho_G} \ra \Omega_c^1(P,\fg)^\hor$ is smooth. Now one sees that for this corser topology we have $d_1\Delta(f)=\pr_h^\ast(df)$. Then also the derivative with respect to the finer topology on the domain ($\Omega_c^1(P,\fg)^\hor_{\rho_\fg}$) has to be given by this equation. We remaind the reader that  the topology on $\Omega_c^1(P,\fg)^\hor_{\rho_\fg}$ is not the induced topology by $\Omega_c^1(P,\fg)$, but the inductive limit toplogy. Where the inductive limit topology on $\Omega_c^1(P,\fg)^\hor$ and the induced topology realy coincide.}
\end{compactenum}
\end{theorem}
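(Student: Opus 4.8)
The plan is to prove the two assertions separately, reducing each to a computation on ordinary current groups.

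For (a) I would first promote the cocycle identity of Lemma \ref{Grukdjllr65rglkdf}(b) from $\d$ to $\Delta$. Composing $\d(f\cdot g)=\d(g)+\Ad^G_\ast(g\inv,\d(f))$ with $\pr_h$ and using that $\pr_h$ does not change the base point $\p(v)$, so that $\bl\ci\pr_h$ commutes with $\Ad^G_\ast(g\inv,\bl)$, gives
\begin{align*}
\Delta(f\cdot g)=\Delta(g)+\Ad^G_\ast(g\inv,\Delta(f));
\end{align*}
the right-hand side stays in $\Omega^1_c(P,\fg)_{\rho_\fg}^\hor$ because $\Ad^G_\ast$ preserves this space (Lemma \ref{GruAdInvarianzOmega}). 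Since $\Ad^G_\ast$ is continuous and linear in its second argument (Lemma \ref{Gruddkdfjhlemmsdg78}), Lemma \ref{GruLemmareresk09834t} applies with $\m=\Ad^G_\ast$, and it remains only to show that $\Delta$ is smooth on some $1$-neighbourhood of $C^\8_c(P,G)_{\rho_G}$.

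For this local statement I would pass to the chart $\exp_\ast$ and to a compact locally finite trivialising system $(\ol{V}_i,\s_i)_{i\in\N}$, embedding the domain into $\prod^\ast_{i}C^\8(\ol{V}_i,G)$ (as in the construction of the Lie group structure, \cite[Chapter 4]{Schuett:2013}) and the target into $\bigoplus_{i}\Omega^1(\ol{V}_i,\fg)$ by $\t\ms(\s_i^\ast\t)_i$ (Lemma \ref{Grukljgdfljhg}). Under these embeddings each component $\ph\ms\s_i^\ast\Delta(\ph)$ is the left logarithmic derivative of the current $\ph\ci\s_i\in C^\8(\ol{V}_i,G)$ corrected by a term $\d(\ph)_{\s_i(\bl)}(\pr_V\, T\s_i\,\bl)$ coming from the vertical part of $T\s_i$; the first is smooth by the classical formula for $\d(\exp_G\ci f)$ (\cite[38.1]{Kriegl:1997}, \cite[Proposition V.7]{Neeb:2004}) and the correction is smooth because, by $\rho_G$-invariance, it is a fixed smooth pointwise operation applied to $\ph\ci\s_i$ (cf.\ the pushforward argument of Lemma \ref{Gruddkdfjhlemmsdg78}). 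Lemma \ref{Grudkdkdlsjlf098} then assembles these into a map that is smooth on a $1$-neighbourhood of the weak product. I expect this local reduction to be the main obstacle: the delicate point is to reconcile the horizontal projection $\pr_h$ with the pullbacks $\s_i^\ast$, which are \emph{not} horizontal, inside the inductive limit topology, which is precisely what Lemma \ref{Grukljgdfljhg} is designed for.

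For (b), knowing $\Delta$ is smooth, I would compute $d_1\Delta(f)=\frac{d}{dt}\big|_{t=0}\Delta(\exp_\ast(tf))$ and compare it with $D_{\rho_\fg}f$ using Lemma \ref{GruPullsepPoints}: it suffices to verify equality after pulling back along an arbitrary horizontal curve $\g\co[-1,1]\ra P$. Because $\g$ is horizontal, $\pr_h$ acts as the identity on $\g'$, so $\g^\ast\Delta(\exp_\ast(tf))=\g^\ast\d(\exp_\ast(tf))=\d\big(\exp_G(t\,f\ci\g)\big)$ is the left logarithmic derivative of a $G$-valued curve, whose $t$-derivative at $0$ is $d(f\ci\g)$, i.e.\ $(f\ci\g)'$. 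On the other hand $(\g^\ast D_{\rho_\fg}f)(s)=df(\pr_h\g'(s))=df(\g'(s))=(f\ci\g)'(s)$, again since $\g$ is horizontal. As the continuous linear map $\g^\ast$ commutes with $\frac{d}{dt}\big|_{t=0}$, the two sides agree for every horizontal $\g$, and Lemma \ref{GruPullsepPoints} yields $d_1\Delta(f)=D_{\rho_\fg}f$.
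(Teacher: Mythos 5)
Your proposal is correct and follows essentially the same route as the paper's proof: for (a) you use the cocycle identity of Lemma \ref{Grukdjllr65rglkdf}(b) together with Lemmas \ref{GruLemmareresk09834t} and \ref{Gruddkdfjhlemmsdg78} to reduce to smoothness near $1$, then the weak-product/direct-sum embeddings of Lemmas \ref{Grudkdkdlsjlf098} and \ref{Grukljgdfljhg} over a compact locally finite trivialising system, and for (b) you pull back along horizontal curves, identify $\g^\ast_\Omega\ci\Delta$ with the classical logarithmic derivative of $\ph\ci\g$, and conclude with Lemma \ref{GruPullsepPoints} — exactly as in the paper. The only cosmetic divergence is in the local components: you split $\s_i^\ast\Delta(\ph)$ as the logarithmic derivative of $\ph\ci\s_i$ plus a vertical correction expressed through the connection form, whereas the paper defines $\ps_i(f)_x(v)=T\l_{f(x)\inv}T\til{f}(\pr_h(T_x\s_i(v)))$ directly and verifies its smoothness with evaluation maps; both are sound.
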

\begin{proof}
\begin{compactenum}
\item Because of Lemma \ref{GruLemmareresk09834t}, Lemma \ref{Gruddkdfjhlemmsdg78} and Lemma \ref{Grukdjllr65rglkdf} it is enough to show the smoothness of $\Delta$ on a $1$-neighbourhood.  Let $(\sigma_i,\ol{V}_i)_{i\in\N}$ be a locally finite compact trivialising system in the sense of \cite[Definition 3.6.]{Schuett:2013} (the existence follows from \cite[Corollary 3.10]{Schuett:2013}).  With the help of Lemma \ref{Grudkdkdlsjlf098} and Lemma \ref{Grukljgdfljhg} it is enough to construct smooth maps $\ps_i\co C^\8(\ol{V}_i, G)\ra \Omega^1(\ol{V}_i,\fg)$ such that  the diagram
\begin{align*}
\begin{xy}\xymatrixcolsep{5pc}
\xymatrix{
C_c^\8(P,G)_{\rho_G} \ar[d]_-{\ph\ms (\ph\ci\sigma_i)_i} \ar[r]^-{\Delta} & \Omega^1_c(P,\fg)_{\rho_\fg}^\hor \ar[d]^-{\t \ms (\sigma_i^\ast\t)_i}\\
\prod^\ast_{i\in\N} C^\8(\ol{V}_i,G) \ar[r]^-{\psi_i}& \bigoplus_{i\in\N} \Omega^1(\ol{V}_i,\fg)
}
\end{xy}
\end{align*}
commutes. Let $\tau_i \co q\inv(V_i)\ra V_i\ti H$, $p \ms (q(p), \ph_i(p))$ be the inverse to $(x,h)\ms \s_i(x)h$. Then $\sigma_i(x)= \tau_i(x,1)$. 
For $f\in C^\8(\ol{V}_i,G)$ we define
\begin{align*}
\til f \co q\inv(V_i)\ra G,~ p \ms \r_G(\ph_i(p) , f(q(p))).
\end{align*}
If $f \in C^\8_c(P,G)_{\r_G}$, then $\wtil{f\ci \s_i} = f|_{q\inv(V_i)}$.
We define $\ps_i\co C^\8(\ol{V}_i, G)\ra \Omega^1(\ol{V}_i,\fg)$ by 
\begin{align*}
\ps_i(f)_x(v) = T\l_{f(x)\inv} T\til{f}(\pr_h(T_x\sigma_i(v))). 
\end{align*}
At first we show that the above diagram commutes. We calculate
\begin{align*}
\ps_i(f\ci\s_i)_x(v) = T\l_{(f_i\ci\s(x))\inv} Tf (\pr_h \ci T_x\s_i(v)) = \sigma_i^\ast(\d(f)\ci \pr_h)_x(v).
\end{align*}
It is left to show the smoothness of $\ps_i$. 
Because we can embed $\Omega^1(\ol{V}_i,\fg)$ into $C^\8(T\ol{V}_i, \fg)$, we show that 
\begin{align*}
C^\8(\ol{V}_i ,G)\ti (TV_i) \ra \fg, ~(f,v)\ms T\l_{f(x)\inv} T\til{f}(\pr_h(T_x\sigma_i(v)))
\end{align*}
is smooth. Let $m\co G\ti G\ra G$ be the multiplication on $G$ and $n\co G\ra TG,~ g\ms 0_g$ the zero section. 
Given $f \in C^\8(\ol{V}_i ,G)$ and $v\in T_x\ol{V}_i$ we calculate
\begin{align*}
\ps_i(f)(v) = Tm(n(f(\p(v))\inv )), T\til{f}(\pr_h T\sigma_i(v)) ).
\end{align*}
The map $\ev\co C^\8(\ol{V}_i,G) \ti \ol{V}_i \ra G$, $f,x\ms f(x)$ is smooth (see \cite[Lemma 121]{Alzaareer:2013}). Therefore it is left to show the smoothness of 
$C^\8(\ol{V}_i,G) \ti Tq \inv (TV_i) \ra TG,~ (f ,v)\ms T\til{f}(v)$ 
The map $\ev^q\co C^\8(\ol{V}_i,G) \ti q\inv(V_i) \ra G, (f,p) \ms f\ci q (p)$ is smooth, because $\ev$ is smooth. We have $T(f\ci q)(v))= T\ev^q(f, \bl)(v) = T\ev^q(n(f),v)$, where $n$ is the zero section of $C^\8(\ol{V}_i,G)$. Hence 
\begin{align*}
T\ev^q \ci (n,\id) \co C^\8(\ol{V},G) \ti Tq\inv(TV) \ra TG,~ (f ,v) \ms Tf \ci Tq (v)
\end{align*}
is smooth.
With $T\til{f} = T\r_G \ci (T\ph_i, Tf\ci Tq)$ the assertion follows from the smoothness of $T\ev^q \ci (n,\id)$.  
\item We write $\d^l\co C^\8([-1,1],G) \ra  C^\8([-1,1],\fg)$ for the classical left logarithmic derivative. It is known that $d_{c_1}\d^l(f) =f'$ for $f\in C([-1,1],\fg)$ (\cite[Lemma 2.4]{Glockner:2012}).
Given a horizontal curve $\g\co [-1,1]\ra P$  we define the maps
\begin{align*}
&\g^\ast_G\co C^\8_c(P,G)_{\r_G} \ra C^\8([-1,1],G), \ph \ms \ph\ci \g\\
&\g^\ast_\fg \co C^\8_c(P,\fg)_{\r_\fg} \ra C^\8([-1,1],\fg), f\ms f\ci \g\tx{and}\\
&\g^\ast_\Omega \co \Omega^1_c(P,\fg)_{\rho_\fg}^\hor \ra C^\8([-1,1],\fg), \t \ms \g^\ast\t.
\end{align*}
Like in Lemma \ref{GruDerPullBackSmooth} one shows that $\g^\ast_G$ is a smooth Lie group homomorphism with $L(\g^\ast_G) = \g_\fg^\ast$ (see Remark \ref{GruMussdochnichtsein}).
The  diagram
\begin{align}\label{GruPullCommd98}
\begin{xy}\xymatrixcolsep{5pc}
\xymatrix{
C^\8_c(P,G)_{\r_G}\ar[r]^-{\Delta} \ar[d]_-{\g_G^\ast} &\Omega_c^1(P,\fg)^\hor_{\rho_\fg} \ar[d]^-{\g_\Omega^\ast}\\
C^\8([-1,1],G) \ar[r]^-{\d^l} & C^\8([-1,1],\fg) 
}
\end{xy}
\end{align}
commutes, because 
\begin{align*}
&(\g_\Omega^\ast \Delta(f))(t)= \d(f) (\pr_h(\g'(t))) = \d(f) (\g'(t)) =T\l_{f\ci\g(t) \inv} \ci Tf(\g'(t)) \\
=& \d^l(f\ci \g).
\end{align*}
Let $f\in C^\8_c(P,\fg)_{\rho_\fg}$. We want to show $d_1\Delta(f) = D_{\rho_\fg}f$. Because of Lemma \ref{GruPullsepPoints} it is enough to show $\g_\Omega^\ast(d_1\Delta(f)) = \g_\Omega^\ast(D_{\rho_\fg}f)$ for an arbitrary horizontal curve $\g \co [-1,1]\ra P$.
Because $\g_\Omega^\ast$ is continuous linear and the diagram (\ref{GruPullCommd98}) commutes we can calculate
\begin{align*}
\g_\Omega^\ast(d_1\Delta(f)) = d_1(\g_\Omega^\ast\ci\Delta)(f) = d_1(\d^l\ci \gamma^\ast_G)(f)= d_1(\d^l)(L(\g^\ast_G)(f)) = (f\ci\g)'.
\end{align*}
Now we use that $\g$ is horizontal and obtain
\begin{align*}
\g_\Omega^\ast(D_{\rho_\fg} f)_t = D_{\rho_\fg}f(\g'(t)) = df(\g'(t)) = \g_\Omega^\ast(d_1\Delta(f))_t
\end{align*}
for $t\in [-1,1]$.
\end{compactenum}
\end{proof}

The proof of the following Lemma \ref{GruMainII} is analogous to the first part of \cite[Proposition III.3]{Maier:2003}.
\begin{lemma}\label{GruMainII}
In the following we write $\Ad$ for the adjoined action of $C_c^\8(P,G)_{\rho_G}$ on $C^\8_c(P,\fg)_{\rho_\fg}$. The map
\begin{align*}
\mathcal{A}\co &C^\8_c(P,G)_{\r_G} \ti (\ol{\Omega}_c^1(P,V)_{\r_V}^\hor \ti_{\o_M} C^\8_c(P,\fg)_{\r_\fg}) \ra \ol{\Omega}_c^1(P,V)_{\r_V}^\hor \ti_{\o_M} C^\8_c(P,\fg)_{\r_\fg}\\
 &(\ph , ([\a],f)) \ms ([\a] - [\k_\fg(\gd(\ph),f)], \Ad(\ph,f)).
\end{align*}
is a smooth group action and its associated Lie algebra action is given by the adoined action described in (\ref{GruAdacttitit234}).
Hence, the adjoint action of $\gg_c(M,\frak{G})$ on the extension $\widehat{\gg_c(M,\frak{G})}:=\ol{\Omega}_c^1(M,\mathbb{V}) \ti_{\o_M} \gg_c(M,\frak{G})$ represented by $\o_M$ integrates to a Lie group action of $\gg_c(M,\mathcal{G})$ on $\widehat{\gg_c(M,\frak{G})}$.
\end{lemma}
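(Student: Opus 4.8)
The plan is to carry out every computation in the model $C^\8_c(P,G)_{\r_G}$ acting on $\ol{\Omega}_c^1(P,V)_{\r_V}^\hor \ti_{\o_M} C^\8_c(P,\fg)_{\r_\fg}$ and to transport the outcome to $\gg_c(M,\mathcal{G})$ and $\widehat{\gg_c(M,\frak{G})}$ along the canonical isomorphisms of Remark~\ref{GruRmarkallslsgfla}. The decisive structural input is a cocycle identity for $\gd=\pr_h^\ast\ci\d$. Applying the linear operator $\pr_h^\ast$ to Lemma~\ref{Grukdjllr65rglkdf}(b) and using that $\pr_h^\ast$ commutes with $\Ad^G_\ast(\ph\inv,\bl)$ (immediate from the pointwise formulae, since $\pr_h$ fixes base points), I would obtain
\begin{align*}
\gd(\ph_1\ph_2)=\gd(\ph_2)+\Ad^G_\ast\bigl(\ph_2\inv,\gd(\ph_1)\bigr)
\end{align*}
for $\ph_1,\ph_2\in C^\8_c(P,G)_{\r_G}$; here $\Ad^G_\ast$ maps back into the horizontal forms by Lemma~\ref{Gruddkdfjhlemmsdg78}.

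First I would check the action axioms. As $\gd(1)=0$ and $\Ad(1,f)=f$, the unit acts trivially. Expanding $\mathcal{A}\bigl(\ph_1,\mathcal{A}(\ph_2,([\a],f))\bigr)$ produces $\Ad(\ph_1\ph_2,f)$ in the second slot and $[\a]-[\til{\kappa}_\fg(\gd(\ph_2),f)]-[\til{\kappa}_\fg(\gd(\ph_1),\Ad(\ph_2,f))]$ in the first. Comparing with $\mathcal{A}(\ph_1\ph_2,([\a],f))$ and inserting the cocycle identity together with linearity of $\til{\kappa}_\fg$, associativity reduces to
\begin{align*}
\til{\kappa}_\fg\bigl(\Ad^G_\ast(\ph_2\inv,\gd(\ph_1)),f\bigr)=\til{\kappa}_\fg\bigl(\gd(\ph_1),\Ad(\ph_2,f)\bigr).
\end{align*}
Pointwise this is the invariance $\k_\fg(\Ad^G_{g\inv}x,y)=\k_\fg(x,\Ad^G_g y)$ with $g=\ph_2(p)$. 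To justify it I would note that, $P$ being connected and $\ph_2\equiv 1$ on the complement of a compact set (nonempty as $M$ is non-compact), the image $\ph_2(P)$ is connected and contains $1$, whence $\ph_2(P)\subs G_0$; thus each $\Ad^G_{\ph_2(p)}$ lies in the identity component of $\Aut(\fg)$, on which the $\ad$-invariance of $\k_\fg$ integrates to full $\Ad^G$-invariance. The required identity therefore holds already on the level of forms, so $\mathcal{A}$ is a group action.

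For smoothness I would assemble $\mathcal{A}$ from smooth constituents: $\gd=\Delta$ is smooth by Theorem~\ref{Grunctandk}(a); $\til{\kappa}_\fg$ is continuous bilinear, hence smooth, by Remark~\ref{GruRmarkallslsgfla}(\ref{GruRmarkallslsgflab}); the quotient map onto $\ol{\Omega}_c^1(P,V)_{\r_V}^\hor$ is continuous linear; and the adjoint action $\Ad$ is smooth, being the derivative of the smooth conjugation action of the Lie group $C^\8_c(P,G)_{\r_G}$ on itself. Since composition, subtraction and direct products of smooth maps are smooth, $\mathcal{A}$ is smooth.

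Finally I would read off the derived action by differentiating $t\ms\mathcal{A}(\exp(tg),([\a],f))$ at $t=0$. The second slot gives $\ad(g,f)=[g,f]$. For the first slot, $\gd(1)=0$ together with $d_1\gd(g)=D_{\r_\fg}g$ from Theorem~\ref{Grunctandk}(b) yields $-[\til{\kappa}_\fg(D_{\r_\fg}g,f)]$; by the Leibniz identity of Remark~\ref{GruRmarkallslsgfla}(e), namely $\til{\kappa}_\fg(D_{\r_\fg}g,f)+\til{\kappa}_\fg(D_{\r_\fg}f,g)=D_{\r_V}(\k_\fg(g,f))\in D_{\r_V}C^\8_c(P,V)_{\r_V}$, this class equals $[\til{\kappa}_\fg(D_{\r_\fg}f,g)]=[\k_\fg(g,D_{\r_\fg}f)]$, which is precisely the first component of the action~(\ref{GruAdacttitit234}). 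Transporting along Remark~\ref{GruRmarkallslsgfla} then gives the asserted integration of the adjoint action of $\gg_c(M,\frak{G})$ on $\widehat{\gg_c(M,\frak{G})}$. I expect the one genuinely delicate point to be the pointwise $\Ad^G$-invariance underlying the action axiom; everything else is bookkeeping combining the established smoothness of $\gd$ with the continuity of $\til{\kappa}_\fg$.
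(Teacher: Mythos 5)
Your proposal is correct and follows essentially the same route as the paper's own proof: the cocycle identity for $\gd=\pr_h^\ast\ci\d$, the invariance $\k_\fg(v,\Ad^G_g w)=\k_\fg(\Ad^G_{g\inv}v,w)$ to verify the action axiom, smoothness via the smoothness of $\Delta$, and the Leibniz identity of Remark \ref{GruRmarkallslsgfla} to identify the derived action with (\ref{GruAdacttitit234}). Your extra observation that $\ph_2(P)$ is connected and meets $1$, hence lies in $G_0$ so that only inner automorphisms occur, is a welcome justification of the invariance identity, which the paper asserts for all $g\in G$ without comment.
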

\begin{proof}
The smoothness of $\mathcal{A}$ follows from the smoothness of $\Delta$. We show that $\mathcal{A}$ is a group action.
For $\ph,\ps \in C^\8_c(P,G)_{\r_G}$ we have 
\begin{align*}
\gd(\ph \ps) =\gd(\ps) + \Ad^G_\ast(\ps\inv, \gd(\ph)).
\end{align*} 
And for $v,w\in \fg$ and $g\in G$ we have
\begin{align}\label{Grunnnnnnf}
\k_\fg(v,\Ad^G_g w) = \k_\fg(\Ad^G_{g\inv}v,w).
\end{align}
In this context $\Ad^G$ is the adjoint action of $G$ on $\fg$.
Now we calculate for $\a \in \Omega^1_c(P,V)_{\r_V}^\hor$
\begin{align*}
&\mathcal{A}(\ph \. \ps ,([\a],f)) = ([\a]- [\k_\fg(\gd(\ph\ps),f)], \Ad_{\ph\ps}f)\\
=&([\a] - [\k_\fg(\gd\ps , f)] - [\k_\fg (\Ad^G_\ast(\ps\inv, \gd(\ph)), f)] , \Ad_\ph . \Ad_\ps . f) \\
\ub{=}{(\ref{Grunnnnnnf})}&([\a] - [\k_\fg(\gd\ps , f)] - [\k_\fg (\gd(\ph) , \Ad^G_\ast({\ps} f))] , \Ad_\ph . \Ad_\ps . f)\\
=& \mathcal{A}(\ph, ([\a]- [\k_\fg(\gd \ps, f)], \Ad_{\ps}f))\\
=& \mathcal{A}(\ph, (\mathcal{A}(\ps,([\a],f))).
\end{align*}
The associated action to $\mathcal{A}$ by $C^\8_c(P,\fg)_{\rho_\fg}$ is given by the  adoined action described in (\ref{GruAdacttitit234}), because $(-[\k_\fg(D_{\rho_\fg}(g),f)], \ad(g,f)) = ([\k_\fg(g,D_{\rho_\fg}(f)], \ad(g,f))$ for $f,g \in C^\8_c(P,\fg)_{\rho_\fg}$.
\end{proof}
 
\begin{theorem}\label{GruMainIII}
If $\ol{H}$ is finite, then we find a Lie group extension 
\begin{align*}
\ol{\Omega}_c^1(M,\mathbb{V})/\gp_{\o_M} \hra \widehat{\gg_c(M,\mathcal{G})_0} \ra \gg_c(M,\mathcal{G})_0
\end{align*}
that corresponds to the central Lie algebra extension that is represented by $\o_M$.
\end{theorem}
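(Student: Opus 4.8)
The plan is to apply Neeb's integration criterion \cite[Proposition 7.6 and Theorem 7.12]{Neeb:2002}, as recalled in the introduction, to the connected Lie group $G := \gg_c(M,\mathcal{G})_0$. Its Lie algebra is $\fg := \gg_c(M,\frak{G})$, the central module is the trivial $\fg$-module $\fz := \ol{\Omega}_c^1(M,\mathbb{V})$, and the relevant cocycle is $\o_M$ from Definition \ref{GruDeffffomegaMMM}. Writing $\widehat{\gg_c(M,\frak{G})} := \ol{\Omega}_c^1(M,\mathbb{V}) \ti_{\o_M} \gg_c(M,\frak{G})$ for the central Lie algebra extension represented by $\o_M$, the criterion asserts that $\fz \hra \widehat{\gg_c(M,\frak{G})} \ra \fg$ integrates to a central Lie group extension of $G$ by $\fz/\gp_{\o_M}$ provided two conditions hold: (i) the period group $\gp_{\o_M} = \im(\per_{\o_M})$ is discrete in $\fz$, and (ii) the adjoint action of $\fg$ on $\widehat{\gg_c(M,\frak{G})}$ integrates to a smooth action of $G$.

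Both conditions have in fact already been verified above. Condition (i) is precisely Theorem \ref{GruMainI}, which uses the standing assumption that $\ol{H}$ is finite. For condition (ii) I would invoke Lemma \ref{GruMainII}: the map $\mathcal{A}$ constructed there is a smooth action of $C^\8_c(P,G)_{\r_G} \cong \gg_c(M,\mathcal{G})$ on $\widehat{\gg_c(M,\frak{G})}$ whose associated Lie algebra action is exactly the adjoint action (\ref{GruAdacttitit234}); restricting $\mathcal{A}$ to the identity component $G = \gg_c(M,\mathcal{G})_0$ then yields the smooth $G$-action required in (ii).

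The step I expect to be the only genuinely delicate point is the completeness hypothesis: Neeb's theorem is phrased for a sequentially complete module, whereas $\fz = \ol{\Omega}_c^1(M,\mathbb{V})$ need not be sequentially complete. This is dealt with by Remark \ref{GruVollststaendigno}, for which I must exhibit $\fz$ as a quotient $E/F$ with $E$ Mackey complete and $F$ closed, together with a continuous bilinear lift of $\o_M$ into $E$. Here I would take $E := \Omega_c^1(P,V)_{\r_V}^\hor$ (a strict inductive limit of Fr\'echet spaces, hence Mackey complete) and $F := D_{\r_V}C^\8_c(P,V)_{\r_V}$, which is closed in $E$ by Lemma \ref{GruAbgeschlossenDrVff}, so that $\fz = E/F$. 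The cocycle lifts to the continuous bilinear map $(f,g) \ms \til{\kappa}_\fg(f, D_{\r_\fg} g) \in E$, whose continuity follows from Remark \ref{GruRmarkallslsgfla}(\ref{GruRmarkallslsgflab}) together with the continuity of $D_{\r_\fg}$ noted in Definition \ref{GruDeffffomegaMMM}; composing with the projection $E \ra E/F = \fz$ recovers $\o_M$. Hence the weak integrals occurring in Neeb's proof exist in $E$, and his conclusion remains valid in this setting.

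With (i), (ii) and this completeness refinement in hand, Neeb's theorem produces the asserted central Lie group extension $\ol{\Omega}_c^1(M,\mathbb{V})/\gp_{\o_M} \hra \widehat{\gg_c(M,\mathcal{G})_0} \ra \gg_c(M,\mathcal{G})_0$ whose corresponding Lie algebra extension is the one represented by $\o_M$. Since the substantive work, namely the discreteness of the period group and the integrability of the adjoint action, is already carried out in Theorem \ref{GruMainI} and Lemma \ref{GruMainII}, this final argument amounts to checking that all hypotheses of the cited integration theorem are met.
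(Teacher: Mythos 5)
Your proposal is correct and follows essentially the same route as the paper, whose proof of this theorem is simply the assembly of Theorem \ref{GruMainI}, Lemma \ref{GruMainII} and \cite[Proposition 7.6 and Theorem 7.12]{Neeb:2002}. Your explicit treatment of the completeness caveat via Remark \ref{GruVollststaendigno}, with $E=\Omega_c^1(P,V)_{\r_V}^\hor$, $F=D_{\r_V}C^\8_c(P,V)_{\r_V}$ closed by Lemma \ref{GruAbgeschlossenDrVff} and the bilinear lift $(f,g)\ms\til{\kappa}_\fg(f,D_{\r_\fg}g)$, is exactly the mechanism the paper relies on implicitly, so it is a faithful (and slightly more careful) rendering of the intended argument.
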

\begin{proof}
We simply need to put Theorem \ref{GruMainI}, Theorem \ref{GruMainII}, \cite[Proposition 7.6]{Neeb:2002} and \cite[Theorem 7.12]{Neeb:2002} together (the last two statements were recalled in the introduction to this theses).
\end{proof}

\section{Universality of the Lie group extension}\label{GruZweiterTeil}
In this section we want to proof \cite[Theorem I.2.]{Janssens:2013} in the case where $M$ is not compact but $\sigma$-compact (like in \cite[Theorem I.2.]{Janssens:2013} $M$ still has to be connected).
In the first part of \cite{Janssens:2013} Janssens and Wockel showed that the cocycle $\o_M\co \gg_c(M,\frak{G})^2 \ra \ol{\Omega}^1_c(M,\mathbb{V})$ (see \cite[p. 129 (1.1)]{Janssens:2013} and Remark \ref{Gru89Remarkdkdf}) is universal if $\fg$ is semisimple and $M$ is a $\sigma$-compact manifold. In the second part of the paper they assumed the base manifold $M$ to be compact and got a universal cocycle $\gg(M,\frak{G})^2 \ra \ol{\Omega}^1(M,\mathbb{V})$. Then they show, that under certain conditions a given Lie group bundle $G\hra \mathcal{G}\ra M$ with finite-dimensional Lie group $G$ is associated to the principal frame bundle $\Aut(G) \hra \text{Fr}(\mathcal{G}) \ra M$. 
Hence they were able to use \cite[Theorem 4.24]{Neeb:2009} to integrate the universal Lie algebra cocycle $\gg(M,\frak{G})^2 \ra \ol{\Omega}^1(M,\mathbb{V})$ to a Lie group cocycle $Z\hra \widehat{\gg(M,\mathcal{G})_0} \ra \gg(M,\mathcal{G})_0$. At this point it was crucial that $M$ was compact and connected in order to apply \cite[Theorem 4.24]{Neeb:2009}. Once the Lie group extension was constructed, Janssens and Wockel proved its universality by using the Recognition Theorem from \cite{Neeb:2002a} (see \cite[Theorem I.2.]{Janssens:2013}).
To generalise \cite[Theorem I.2.]{Janssens:2013} to the case where $M$ is connected and not compact, much of the proofs of \cite{Janssens:2013} can be transfered to case of a non-compact base manifold by using Theorem \ref{GruMainIII} instead of \cite[Theorem 4.24]{Neeb:2009}. But our proof is shorter, because we can use our Theorem \ref{GruMainIII} that holds for section groups and not just for gauges group, while \cite[Theorem 4.24]{Neeb:2009} holds only for gauge groups. Hence we do not have to reduce the statement to the case of gauge groups, like it was done in \cite{Janssens:2013}. We mention that in this section we assume the typical fiber $G$ of the Lie group bundle to be connected, while in \cite{Janssens:2013} Janssens and Wockel assume $\p_0(G)$ to be finitely generated

\begin{convention}
In this section $G$ is a connected semisimple finite-dimensional Lie group. Like in the rest of the paper, $M$ still is a connected, non-compact, $\sigma$-compact finite-dimensional manifold.
\end{convention}

Analogously to \cite[p. 130]{Janssens:2013} we consider the following setting\footnote{In \cite{Janssens:2013} the Lie group $G$ is not assumed to be connected. Instead Janssens and Wockel assume $\p_0(G)$ to be finite generated.}:
\begin{definition}[Cf. p. 130 in \cite{Janssens:2013}]\label{GruUni1}
Let $G$ be a connected finite-dimensional semisimple Lie group with Lie algebra $\fg$ and $G\hra \mathcal{G} \xra{q} M$ be a Lie group bundle. Like in \cite[11.3.1]{Hilgert:2012} we turn $\Aut(G)$ into a finite-dimensional Lie group. 
In particular $\Aut(G)$ becomes a Lie group such that $L\co \Aut(G)\ra \Aut(\fg)$ is an isomorphism onto a closed subgroup (\cite[Lemma 11.3.3]{Hilgert:2012}) and $\Aut(G)$ acts smooth on $G$.
\end{definition}

\begin{lemma}\label{GruUni2}
The Lie group bundle $G\hra \mathcal{G}\xra{q} M$ is isomorphic to the associated Lie group bundle of the frame principal bundle $\Aut(G) \hra \mathrm{Fr}(\mathcal{G}) \ra M$ (cf. \cite[p. 130]{Janssens:2013}). Obviously all manifolds are $\sigma$-compact, because $M$ is $\sigma$-compact and $\Aut(G)$ is homeomorph to an closed subgroup of $\Aut(\fg)$.
\end{lemma}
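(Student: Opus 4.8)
The plan is to imitate the standard fact, recalled for vector bundles in \cite[Remark 2.2.21]{Gundogan:2011}, that a bundle is recovered from its frame bundle as an associated bundle; here ``linear isomorphisms $\R^n \to \mathbb{V}_x$'' are replaced by ``isomorphisms of Lie groups $G \to \mathcal{G}_x$''. Concretely, I would define the frame bundle fibrewise by $\mathrm{Fr}(\mathcal{G})_x := \mathrm{Iso}(G,\mathcal{G}_x)$, the set of Lie group isomorphisms from the typical fibre $G$ onto the fibre $\mathcal{G}_x$. The group $\Aut(G)$ acts from the right by precomposition, $\phi \cdot a := \phi \circ a$; this action is free, and for any two frames $\phi_1,\phi_2 \co G \to \mathcal{G}_x$ one has $\phi_2 = \phi_1 \circ (\phi_1^{-1}\circ \phi_2)$ with $\phi_1^{-1}\circ\phi_2 \in \Aut(G)$, so it is transitive on each fibre.

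Next I would equip $\mathrm{Fr}(\mathcal{G})$ with a smooth principal $\Aut(G)$-bundle structure. By definition a Lie group bundle admits an atlas of trivialisations $\Phi_i \co \mathcal{G}|_{U_i} \to U_i \times G$ that restrict to isomorphisms of Lie groups on each fibre. For $x \in U_i$ the inverse of the fibre map yields a frame $s_i(x) := (\pr_G \circ \Phi_i|_{\mathcal{G}_x})^{-1} \in \mathrm{Fr}(\mathcal{G})_x$, hence a local section $s_i \co U_i \to \mathrm{Fr}(\mathcal{G})$ and thereby a trivialisation of $\mathrm{Fr}(\mathcal{G})$. On overlaps the transition functions $g_{ij}\co U_i \cap U_j \to \Aut(G)$ of $\mathcal{G}$ (which are $\Aut(G)$-valued precisely because the $\Phi_i$ are fibrewise group isomorphisms) serve as the cocycle of $\mathrm{Fr}(\mathcal{G})$. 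By Definition \ref{GruUni1} the group $\Aut(G)$ is a finite-dimensional Lie group acting smoothly on $G$, so this data indeed defines a smooth principal $\Aut(G)$-bundle and the associated-bundle construction is available.

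Finally I would exhibit the isomorphism. Define
\[
\Theta \co \mathrm{Fr}(\mathcal{G}) \times_{\Aut(G)} G \to \mathcal{G}, \quad [\phi,g] \mapsto \phi(g).
\]
It is well defined, since $[\phi\circ a,\, a^{-1}(g)] \mapsto \phi(a(a^{-1}(g))) = \phi(g)$ for $a \in \Aut(G)$; it covers $\id_M$; on each fibre it is an isomorphism of Lie groups because $\phi$ is; and it is bijective, as $[\phi,\phi^{-1}(y)] \mapsto y$ for $y \in \mathcal{G}_x$. In the trivialisations constructed above both bundles are glued from the pieces $U_i \times G$ by the very same cocycle $g_{ij}$ acting on $G$ by automorphisms, and $\Theta$ is locally the identity on $U_i \times G$; hence $\Theta$ is smooth with smooth inverse, i.e.\ an isomorphism of Lie group bundles. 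The $\sigma$-compactness assertion is then immediate: $M$ is $\sigma$-compact by our standing convention, $\Aut(G)$ is $\sigma$-compact since $L$ identifies it with a closed subgroup of the finite-dimensional Lie group $\Aut(\fg)$, and a locally trivial bundle with $\sigma$-compact base and $\sigma$-compact fibre has $\sigma$-compact total space.

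The main obstacle is the second step: verifying that the fibrewise-defined frames assemble into a genuine smooth principal $\Aut(G)$-bundle, i.e.\ that the local sections $s_i$ are smooth and the transition maps $g_{ij}$ are smooth into $\Aut(G)$. This is exactly where the hypotheses on $G$ enter through Definition \ref{GruUni1}: connectedness makes $L\co \Aut(G)\to\Aut(\fg)$ injective, and semisimplicity makes its image a closed (algebraic) subgroup, so that $\Aut(G)$ is a well-behaved finite-dimensional Lie group with smooth action on $G$ and the smoothness of all transition data can be checked inside $\Aut(\fg)$.
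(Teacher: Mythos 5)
The paper offers no proof of this lemma at all -- it simply cites \cite[p. 130]{Janssens:2013}, where the frame-bundle construction you describe is exactly the one used. Your argument is correct and is precisely that standard construction (frames as fibrewise Lie group isomorphisms, transition cocycle inherited from the Lie group bundle atlas, smoothness of the $\Aut(G)$-valued cocycle checked through the closed embedding $L\co \Aut(G)\ra\Aut(\fg)$ from Definition \ref{GruUni1}, and the evaluation map $[\phi,g]\ms\phi(g)$ as the bundle isomorphism), so it supplies the details the paper leaves to the reference.
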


\begin{definition}
We define $V=V(\fg)$. In the situation considered in this subsection the map $\r_V \co \Aut(G)\ti V \ra V,~ (\ph,\k_\fg(v,w)) \ms \k_\fg(L(\ph)(v),L(\ph)(w))$ is the smooth automorphic action $\rho_V$ described in Convention \ref{GruConvention9898}.
\end{definition}

\begin{lemma}[Cf. p. 130 in \cite{Janssens:2013}]\label{GruActTriv}
The identity component of $\Aut(G)$ acts trivially on  by the representation $\r_V \co \Aut(G)\ti V \ra V,~ (\ph,\k_\fg(v,w)) \ms \k_\fg(L(\ph)(v),L(\ph)(w))$.
\end{lemma}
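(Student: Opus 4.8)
The plan is to reduce the statement to the standard description of the identity component of the automorphism group of a semisimple Lie algebra. Since $\Aut(G)_0$ is connected and $L\colon\Aut(G)\to\Aut(\fg)$ is a continuous homomorphism (Definition \ref{GruUni1}), it maps the connected set $\Aut(G)_0$ into the identity component $\Aut(\fg)_0$. Because $\fg$ is semisimple, every derivation of $\fg$ is inner, i.e.\ $\der(\fg)=\ad(\fg)$ (Whitehead's first lemma), and hence $\Aut(\fg)_0=\operatorname{Inn}(\fg)$ is exactly the subgroup generated by the inner automorphisms $e^{\ad_x}$, $x\in\fg$. It therefore suffices to check that each such $e^{\ad_x}$ fixes every generator $\k_\fg(v,w)$ of $V$.

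For this I would use only the invariance of the universal bilinear map $\k_\fg$, namely $\k_\fg([x,a],b)+\k_\fg(a,[x,b])=0$ for all $x,a,b\in\fg$. Fixing $x,v,w\in\fg$, I consider the curve $t\mapsto\k_\fg(e^{t\ad_x}v,\,e^{t\ad_x}w)$ in $V$. Differentiating, using $\tfrac{d}{dt}e^{t\ad_x}u=\ad_x(e^{t\ad_x}u)=[x,e^{t\ad_x}u]$ and bilinearity of $\k_\fg$, gives
\[
\frac{d}{dt}\,\k_\fg\!\left(e^{t\ad_x}v,\,e^{t\ad_x}w\right)
=\k_\fg\!\left([x,e^{t\ad_x}v],\,e^{t\ad_x}w\right)+\k_\fg\!\left(e^{t\ad_x}v,\,[x,e^{t\ad_x}w]\right),
\]
which vanishes identically by invariance. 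Hence the curve is constant and equal to its value at $t=0$, so $\k_\fg(e^{\ad_x}v,e^{\ad_x}w)=\k_\fg(v,w)$; the same then holds for any finite product of such maps, that is, for every element of $\operatorname{Inn}(\fg)$.

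It remains to assemble the pieces. Given $\ph\in\Aut(G)_0$ we have $L(\ph)\in\Aut(\fg)_0=\operatorname{Inn}(\fg)$ by the first paragraph, whence $\r_V(\ph,\k_\fg(v,w))=\k_\fg(L(\ph)v,L(\ph)w)=\k_\fg(v,w)$ by the second. Since $V$ is spanned by the elements $\k_\fg(v,w)$ (Convention \ref{GruConvention9898}), this yields $\r_V(\ph,\bl)=\id_V$ for every $\ph\in\Aut(G)_0$, which is the assertion.

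The only genuine input beyond routine bilinear calculus is semisimplicity of $\fg$, which enters through $\der(\fg)=\ad(\fg)$ and hence $\Aut(\fg)_0=\operatorname{Inn}(\fg)$; this is the step where I expect the standard structure theory of semisimple Lie algebras to be indispensable. Without semisimplicity the derivation algebra can be strictly larger than $\ad(\fg)$, outer automorphisms in the identity component need not preserve $\k_\fg$, and the conclusion may fail, so the hypothesis fixed in the running convention of this section is essential rather than cosmetic.
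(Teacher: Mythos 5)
Your proof is correct and takes essentially the same route as the paper: both reduce to the statement that $\Aut(\fg)_0$ acts trivially, invoke semisimplicity to get $\der(\fg)=\ad(\fg)$, and use the invariance identity $\k_\fg([x,v],w)+\k_\fg(v,[x,w])=0$ to make the relevant derivative vanish. The only cosmetic difference is that the paper differentiates the representation $\check{\r}$ at the identity and checks that the derived Lie algebra representation vanishes on $\mathrm{inn}(\fg)$, whereas you integrate along the one-parameter subgroups $t\ms e^{t\ad_x}$ generating $\Aut(\fg)_0$; these are two phrasings of the same computation.
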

\begin{proof}
Obviously it is enough to show that $(\Aut(\fg))_0$ acts trivially by $\r \co\Aut(\fg) \ti V \ra V,~(\ph,\k_\fg(x, y))\ms \k_\fg(\ph(x),\ph(y))$. For $\check{\r}\co \Aut(\fg) \ra GL(V),~ \ph \ms \r(\ph,\bl)$, $x,y \in \fg$ and $f\in \der(\fg)$ we have $L(\check{\r})(f)(\k_\fg(x, y)) = d_{\id} \r (\bl,\k_\fg(x, y))(f)$. Defining $\ev_x \co \Aut(\fg) \ra \fg,~ \ph \ra \ph(x)$ for $x\in \fg$ we get 
\begin{align*}
\r(\bl,\k_\fg(x, y)) = \k_\fg\ci (\ev_x,\ev_y).
\end{align*}
We have $d_{\id} \ev_x(f) = \fr{\partial}{\partial t}|_{t=0} \exp(tf)(x) = f(x)$. Hence
\begin{align*}
&d_{\id} \r(\bl,\k_\fg(x,y)) (f) = \k_\fg(\ev_x(\id), d_{\id} \ev_y(f)) + \k_\fg(d_{\id} \ev_x(f) , \ev_y(\id))\\
=&\k_\fg(x,f(y)) + \k_\fg(f(x),y).
\end{align*}
Because $\fg$ is semisimple, $\der(\fg) = \mathrm{inn}(\fg)$. For $z\in \fg$ we calculate
\begin{align*}
L(\check{\r})(\ad_z) (\k_\fg(x,y)) = \k_\fg(x,[y,z]) + \k_\fg([x,z],y) = \k_\fg(x,[y,z]) + \k_\fg(x,[z,y]) =0.
\end{align*}
Hence $\check{\r}|_{\Aut(\fg)_0} = \id_V$.
\end{proof}

Analogously to \cite[p. 130]{Janssens:2013}, we need the following requirement:
\begin{convention}\label{GruConventioooo}
In the following we assume $\ol{\Aut(G)}:= \Aut(G)/\ker(\r_V)$ to be finite.
\end{convention}

\begin{definition}
Combining Convention \ref{GruConventioooo}, Lemma \ref{GruActTriv} and Theorem \ref{GruMainIII} we find a Lie group extension
\begin{align*}
\ol{\Omega}_c^1(M,\mathbb{V})/\gp_{\o_M} \hra \widehat{\gg_c(M,\mathcal{G})_0} \ra \gg_c(M,\mathcal{G})_0
\end{align*}
that corresponds to the central Lie algebra extension that is represented by $\o_M$.  
We write $Z:=\ol{\Omega}_c^1(M,\mathbb{V})/\gp_{\o_M}$. If $\p \co \widetilde{\gg_c(M,\mathcal{G})_0} \ra \gg_c(M,\mathcal{G})_0$ is the universal covering homomorphism and $Z\hra H \ra \widetilde{\gg_c(M,\mathcal{G})_0}$ the pullback extension then \cite[Remark 7.14.]{Neeb:2002} tells us that we get a central extension of Lie groups
\begin{align*}
E:= Z\ti \p_1(\gg_c(M,\mathcal{G})_0) \hra H \ra \gg_c(M,\mathcal{G})_0.
\end{align*}
Its corresponding Lie algebra extension is represented by ${\o_M}$.
\end{definition}

The following theorem is the analogous statement to \cite[Theorem I.2.]{Janssens:2013} in the case of a non-compact base-manifold and  connected typical fibre.
\begin{theorem}
The central Lie group extension $Z\ti \p_1(\gg_c(M,\mathcal{G})_0) \hra H \ra \gg_c(M,\mathcal{G})_0$ is universal for all abelian Lie  groups modelled over locally convex spaces.
\end{theorem}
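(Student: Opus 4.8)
The plan is to deduce group‑level universality from Lie‑algebra‑level universality via the Recognition Theorem of Neeb \cite[Theorem 4.13]{Neeb:2002a}. That theorem reduces the claim to three ingredients: (i) the Lie algebra $\gg_c(M,\frak{G})$ is perfect; (ii) the Lie algebra extension represented by $\o_M$ is universal among central extensions of $\gg_c(M,\frak{G})$; and (iii) the total group $H$ is simply connected. Once (i)--(iii) are in place, the Recognition Theorem yields that $E = Z\ti\p_1(\gg_c(M,\mathcal{G})_0)\hra H\ra \gg_c(M,\mathcal{G})_0$ is the initial object in the category of central extensions of $\gg_c(M,\mathcal{G})_0$ by abelian Lie groups modelled over locally convex spaces, which is exactly the assertion.

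For (i) and (ii) I would simply invoke the first part of \cite{Janssens:2013}. Since $G$ is connected and semisimple, $\fg=[\fg,\fg]$ is perfect, and perfectness passes to the compactly supported section algebra $\gg_c(M,\frak{G})$; moreover, by Remark \ref{Gru89Remarkdkdf} the cocycle $\o_M$ coincides with the cocycle $\o_\nabla$ of \cite{Janssens:2013}, which Janssens and Wockel proved to represent the universal central extension of $\gg_c(M,\frak{G})$ precisely under our standing hypotheses (connected $\sigma$-compact $M$, semisimple $\fg$). Crucially, that part of \cite{Janssens:2013} is already formulated for $\sigma$-compact $M$, so no new argument is needed: this is the sense in which the universality proof ``can use the arguments from the compact case.''

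The only genuinely new point is (iii). By construction $H$ is the pullback of $\widehat{\gg_c(M,\mathcal{G})_0}$ along the universal covering $\p\co \widetilde{\gg_c(M,\mathcal{G})_0}\ra \gg_c(M,\mathcal{G})_0$, hence a central extension
\begin{align*}
Z \hra H \ra \widetilde{\gg_c(M,\mathcal{G})_0}, \qquad Z = \ol{\Omega}_c^1(M,\mathbb{V})/\gp_{\o_M}.
\end{align*}
Here $\ol{\Omega}_c^1(M,\mathbb{V})$ is a locally convex space, hence contractible, and $\gp_{\o_M}$ is discrete by Theorem \ref{GruMainI}; therefore the quotient map $\ol{\Omega}_c^1(M,\mathbb{V})\ra Z$ is a covering and $\p_1(Z)\cong \gp_{\o_M}$. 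In the long exact homotopy sequence of the extension the connecting homomorphism $\p_2(\widetilde{\gg_c(M,\mathcal{G})_0})=\p_2(\gg_c(M,\mathcal{G})_0)\ra \p_1(Z)\cong \gp_{\o_M}$ is identified with the period homomorphism $\per_{\o_M}$, whose image is $\gp_{\o_M}$ by definition. Thus the connecting map is surjective and, as $\widetilde{\gg_c(M,\mathcal{G})_0}$ is simply connected, $\p_1(H)=0$.

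I expect the main obstacle to be the rigorous justification of (iii): one must verify that the connecting map in the homotopy sequence really is $\per_{\o_M}$ — this is where the period formalism of \cite{Neeb:2002} and the passage to the universal cover via \cite[Remark 7.14]{Neeb:2002} enter — and one must confirm that the hypotheses of the Recognition Theorem are met in the precise category of abelian Lie groups modelled over locally convex spaces, in particular that the disconnected factor $\p_1(\gg_c(M,\mathcal{G})_0)$ appearing in $E$ is correctly absorbed by the simple‑connectedness of $H$. Everything else is a direct citation of the Lie‑algebra universality of \cite{Janssens:2013} together with Theorems \ref{GruMainI} and \ref{GruMainIII}.
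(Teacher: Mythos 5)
Your overall strategy coincides with the paper's: reduce to the Recognition Theorem of \cite{Neeb:2002a} together with the Lie-algebra universality of $\o_M$ established in the first ($\sigma$-compact) part of \cite{Janssens:2013}, and then prove $\p_1(H)=0$ from a long exact homotopy sequence, using that the connecting homomorphism into $\p_1$ of the centre is the period homomorphism and hence surjective onto $\gp_{\o_M}$. Your version of the $\p_1(H)=0$ computation, run for $Z\hra H\ra \widetilde{\gg_c(M,\mathcal{G})_0}$, is equivalent to the paper's, which runs the sequence for $Z\ti\p_1(\gg_c(M,\mathcal{G})_0)\hra H\ra \gg_c(M,\mathcal{G})_0$ and additionally checks that the connecting map $\p_1(\gg_c(M,\mathcal{G})_0)\ra\p_0$ of the fibre is injective; both give the same conclusion.

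The one point you explicitly leave open --- ``confirm that the hypotheses of the Recognition Theorem are met'' --- is precisely the nontrivial issue on which the paper spends the first half of its proof, so you should be aware it is not a formality. Both \cite[Theorem 4.13]{Neeb:2002a} and \cite[Theorem III.1]{Janssens:2013} assume the centre of the Lie algebra extension to be sequentially (respectively Mackey) complete, whereas $\ol{\Omega}^1_c(M,\mathbb{V})=\Omega^1_c(M,\mathbb{V})/d_{\mathbb{V}}\gg_c(M,\mathbb{V})$ is a quotient of an LF-space by a closed subspace and need not be complete in general (Lemma \ref{GruFolgenvoll2} yields sequential completeness only under an extra hypothesis on $\ol{P}$). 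The paper's resolution is Remark \ref{GruVollststaendigno}: completeness is used in \cite{Neeb:2002} and \cite{Neeb:2002a} only to guarantee the existence of the weak integrals defining $\per_{\o}$ and the derived period maps $\per_{\g\ci\o}$, and these exist here because $\o_M$ lifts to a continuous bilinear map into the Mackey complete space $\Omega^1_c(M,\mathbb{V})$. Without this observation the citation of the Recognition Theorem is not justified; with it, your argument closes and agrees with the paper's.
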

\begin{proof}
The statement \cite[Theorem 4.13]{Neeb:2002a} and the analogous statement \cite[Theorem III.1]{Janssens:2013} are formulated for sequentially complete respectively Mackey complete spaces. But the completeness is only assumed to guaranty the existence of the period map $\per_\o$ and the existence of period maps of the form $\per_{\g\ci \o}$ for  continuous linear maps $\g \co \frak{z}\ra \frak{a}$. Obviously the period maps $\per_{\g\ci \o}$ exist if the period map $\per_\o$ exists.  
Hence with Remark \ref{GruVollststaendigno} we do not need to assume the completeness of the spaces. Therefore it is left to show that $H$ is simply connected. Using \cite[Remark 5.12]{Neeb:2002} we have the long exact homotopy sequence
\begin{align*}
&\p_2(\gg_c(M,\mathcal{G})_0) \xra{\d_2} \p_1(Z\ti \p_1(\gg_c(M,\mathcal{G})_0)) \xra{i} \p_1(H) \xra{p} \p_1(\gg_c(M,\mathcal{G})_0) \\
&\xra{\d_1} \p_0(Z\ti \p_1(\gg_c(M,\mathcal{G})_0)).
\end{align*}
We show $i=0$: Calculating
\begin{align*}
\p_1(Z\ti \p_1(\gg_c(M,\mathcal{G})_0))  = \p_1(\ol{\Omega}^1_c(M,\mathbb{V})/\gp_{\o_M}) = \gp_{\o_M}
\end{align*}
and using \cite[Proposition 5.11]{Neeb:2002} we conclude that $\d_2$ is surjective. Hence $i=0$. From
\begin{align*}
\p_0(Z\ti \p_1(\gg_c(M,\mathcal{G})_0))= \p_1(\gg_c(M,\mathcal{G})_0),
\end{align*}
we get that $\d_1$ is injective. Therefore $p=0$. Thus $\p_1(H)=0$.
\end{proof}

\appendix
\section{Some differential topology}\label{GruAppendix}
\begin{lemma}\label{GruRealisation}
Let $H\hra P\xra{q} M$ be a finite-dimensional smooth principal bundle (with $\sigma$-compact total space $P$), $\rho\co H \ti V\ra V$ a finite-dimensional smooth linear representation and $\mathbb{V}:=P\ti_{\rho}V$ the associated vectorbundle.
\begin{compactenum}
\item The canonical isomorphism of vector spaces (see e.g. \cite[Satz 3.5]{Baum:2014})
$\gph \co \Omega^k(P,V)_\r^\hor \ra \Omega^k(M,\mathbb{V})$, $\o\ms \til{\o}$ (with $\til{\o}_x(v_1,\dots,v_k)=\o_{\s_(x)}(T\s(v_1),\dots, T\s(v_k))$ for a local section $(\s\co U \ra P$ of $P\xra{q}M$ and $x\in U$)
is in fact an isomorphism of topological vector spaces.
\item Also the isomorphism of vector spaces
$\gph \co \Omega^k_c(P,V)_\r^\hor \ra \Omega_c^k(M,\mathbb{V})$, $\o\ms\til{\o}$
is in  an isomorphism of topological vector spaces.
\end{compactenum}
\end{lemma}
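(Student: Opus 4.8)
Since \cite[Satz 3.5]{Baum:2014} already supplies the linear bijection $\gph$, the entire content is topological, and part (b) will reduce formally to part (a). The plan for (a) is to compare the two Fr\'echet spaces through a common product of local form spaces. I would fix a locally finite compact trivialising system $(\ol V_i,\sigma_i)_{i\in\N}$ of $H\hra P\xra{q}M$ (which exists by \cite[Corollary 3.10]{Schuett:2013}), together with the associated smooth maps $\beta_{\sigma_i}\co q\inv(\ol V_i)\ra H$ and cocycles $\beta_{i,j}$ of Remark \ref{Gruruckdsld}, and consider the two maps
\[
A\co \Omega^k(P,V)_\r^\hor\ra\prod_{i\in\N}\Omega^k(\ol V_i,V),\quad \o\ms(\sigma_i^\ast\o)_i,\qquad
B\co \Omega^k(M,\mathbb{V})\ra\prod_{i\in\N}\Omega^k(\ol V_i,V),\quad \beta\ms(\beta^i)_i,
\]
where $\beta^i$ is the $V$-valued representative of $\beta$ in the frame $\sigma_i$ (so that $\beta_x=[\sigma_i(x),\beta^i_x]$). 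Because the local representatives of $\gph(\o)$ are exactly the $\sigma_i^\ast\o$, one has $B\ci\gph=A$; hence if $A$ and $B$ are topological embeddings with a common image, then $\gph=B\inv\ci A$ is an isomorphism of topological vector spaces.

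That $B$ is a topological embedding, with image the space $\Omega_\times:=\{(\eta_i)_i:(\eta_i)_x=\r(\beta_{i,j}(x))\ci(\eta_j)_x\text{ on }\ol V_i\cap\ol V_j\}$ of $\r$-compatible tuples, is just the standard cocycle description of the section-space topology of the associated bundle $\mathbb{V}$ (the topology of \cite{Gloeckner}). For $A$, continuity is the easy half: over each $\ol V_i$ the map is the pullback $\o\ms\sigma_i^\ast\o$ along the smooth section $\sigma_i$, which is continuous for the $C^\8$-topology (the form-valued analogue of continuity of pushforward and precomposition by smooth maps, cf. \cite[Lemma 4.3.2]{Glockner:a} and its use for $k=1$ in Lemma \ref{Gruddkdfjhlemmsdg78}), and local finiteness of $(\ol V_i)_i$ then makes $A$ continuous into the product.

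The main obstacle is to see that $A$ is a topological \emph{embedding} onto $\Omega_\times$, i.e.\ to produce a continuous left inverse; this is exactly the point where, as already for $k=1$ in Lemma \ref{Grukljgdfljhg}, horizontality forces the extra considerations. Generalising that lemma to arbitrary $k$, I would reconstruct $\o$ from a compatible tuple $(\eta_i)_i\in\Omega_\times$ by the local formula
\[
\o_p(w_1,\dots,w_k)=\r\bigl(\beta_{\sigma_i}(p)\inv\bigr).\,(\eta_i)_{q(p)}\bigl(Tq(w_1),\dots,Tq(w_k)\bigr),\qquad p\in q\inv(\ol V_i),
\]
which kills vertical vectors (hence is horizontal), is $\r$-invariant because $\beta_{\sigma_i}(ph)=\beta_{\sigma_i}(p)h$, and is independent of $i$ on overlaps by the cocycle relation for $\beta_{i,j}$. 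Continuity of this reconstruction as a map $\Omega_\times\ra\Omega^k(P,V)_\r^\hor$ follows from Gl\"ockner's continuity results for composition and pushforward with the smooth data $\r$, $\beta_{\sigma_i}$ and $Tq$, precisely as the map $\Psi$ is handled in the proof of Lemma \ref{Grukljgdfljhg}. This shows that $A$ is a topological embedding with image $\Omega_\times$, and together with $B$ it proves (a).

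For (b) I would argue by supports and inductive limits. The map $\gph$ is support-compatible: $\supp(\o)\subs q\inv(K)$ if and only if $\supp(\gph\o)\subs K$, since $\gph$ is built from pullback along sections of $q$. Hence for every compact $K\subs M$ the map $\gph$ restricts to a linear bijection $\Omega^k_K(P,V)_\r^\hor\ra\Omega^k_K(M,\mathbb{V})$, which is a homeomorphism by (a) because both sides carry the subspace topologies of the spaces treated there. As $\Omega^k_c(P,V)_\r^\hor=\varinjlim_K\Omega^k_K(P,V)_\r^\hor$ and $\Omega^k_c(M,\mathbb{V})=\varinjlim_K\Omega^k_K(M,\mathbb{V})$ are the locally convex inductive limits over the same directed set of compacta, and $\gph$ is a bijection compatible with these systems that is a homeomorphism at each stage, it induces a homeomorphism of the inductive limits, which gives (b).
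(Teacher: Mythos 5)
Your proposal is correct, and for part (a) it takes a genuinely different route from the paper at the decisive step. The paper only proves continuity of $\gph$ itself: it uses that the topology on $\Omega^k(M,\mathbb{V})=\Gamma(\Alt^k(TM,\mathbb{V}))$ is initial with respect to the restrictions to the trivialising sets, reduces to continuity of $\o\ms\sigma_i^\ast\o$ (the same computation as your continuity of $A$, via the embedding into $C^\8((TP)^k,V)$ and \cite{Glockner:a}), and then obtains continuity of the inverse for free from the open mapping theorem, since both sides are Fr\'echet spaces. You instead bypass the open mapping theorem by factoring both spaces through the product $\prod_i\Omega^k(\ol V_i,V)$ and exhibiting an explicit continuous gluing inverse on the space $\Omega_\times$ of compatible tuples, i.e.\ you generalise the $\Phi/\Psi$ construction of Lemma \ref{Grukljgdfljhg} from $k=1$ with compact supports to arbitrary $k$; the verifications you list (horizontality, $\r$-invariance via $\beta_{\sigma_i}(ph)=\beta_{\sigma_i}(p)h$, independence of $i$ via the cocycle relation) are exactly the ones that lemma carries out. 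Your version is longer but constructive and independent of the open mapping theorem, so it would survive in settings where the model spaces are not Fr\'echet; the paper's version is shorter but leans on completeness and metrizability. For part (b) the two arguments coincide: both restrict to the closed subspaces $\Omega^k_K$, note that a homeomorphism of the ambient Fr\'echet spaces restricts to homeomorphisms there, and pass to the locally convex inductive limits, so continuity of $\gph$ and of its inverse on the LF-spaces follows stage by stage.
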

\begin{proof}
\begin{compactenum}
\item We choose an atlas $\ps_i \co q\inv(U_i)\ra U\ti H$ of trivialisations of $P$ with $i\in I$. Let $\sigma_i:= \ps_i\inv(\bl,1_H)$ be the canonical section corresponding to $\ps_i$. As $\Omega^k(P,V)_\r^\hor$ and $\Omega^k(M,\mathbb{V})$ are Fr{\'e}chet spaces it is enough to show the continuity of $\gph$ (Open mapping theorem). The topology on  $\Omega^k(M,\mathbb{V}) = \Gamma(\Alt^k(TM,\mathbb{V}))$ is initial with respect to the maps  $\Gamma(\Alt^k(TM,\mathbb{V}))\ra  \Gamma(\Alt^k(TU_i,\mathbb{V}|_{U_i}))$, $\eta \ms \eta|_{U_i}$. 
Given $\o\in \Omega^k(P,V)_\r^\hor$, $x\in U_i$ and $v\in T_xU_i$ we have $(\til{\o}|_{U_i})_x(v) =[\sigma_i(x),\sigma_i^\ast\o_x(v)]$. Because $\Gamma(\Alt^k(TU_i,\mathbb{V}|_{U_i}))\cong \Gamma(\Alt^k(TU_i,V)) \cong \Omega^k(U_i,V)$ it is enough to show the continuity of 
$\Omega^k(P,V)_{\rho_V}^\hor \ra \Omega^k(U_i,V)$, $\o \ms \sigma_i^\ast \o$. 
The map $C^\8((TP)^k,V)\ra C^\8((TU_i)^k,V)$, $f\ms f\ci T\sigma_i\ti \dots\ti T\sigma_i$ is continuous (see \cite{Glockner:a}). Now the assertion follows, because we can embed $\Omega^k(P,V)_{\rho_V}^\hor$ into $C^\8((TP)^k,V)$.
\item The analogous map from $\Omega^k(P,V)_\r^\hor$ to $\Omega^k(M,\mathbb{V})$ is continuous. Hence given a compact set $K \subs M$ we get that the corresponding map from $\Omega^k_K(P,V)_\r^\hor$ to $\Omega_K^k(M,\mathbb{V})$ is continuous. Therefore $\gph$ is continuous. The same argument shows that the inverse of $\gph$ is continuous.
\end{compactenum}

\end{proof}

The basic consideration in the following remark, seems to be part of the folklore.
\begin{remark}\label{GruIsoQuotientBundle}
Given the situation of Definition \ref{GruQuotientenBuendel} the following holds.
\begin{compactenum}
\item The vertical bundle of $\ol{H} \hra \ol{P} \xra{\ol{q}} M$ is given by
$V\ol{P} = T\p (VP))$ and $H\ol{P}:= T\p(HP)$ is a principal connection on $\ol{P}$.
\- Given $k \in \N_0$ the pullback $\p^\ast \co \Omega^k(\ol{P},V)_{\ol{\r}_V}^\hor \ra \Omega^k(P,V)_{\r_V}^\hor,~ \t \ms \p^\ast \t$ is an isomorphism of topological vector spaces and an isomorphism of chain complexes.
\- Given $k \in \N_0$ the pullback $\p^\ast \co \Omega_c^k(\ol{P},V)_{\ol{\r}_V}^\hor \ra \Omega_c^k(P,V)_{\r_V}^\hor,~ \t \ms \p^\ast\t$ is an isomorphism of topological vector spaces and an isomorphism of chain complexes.
\end{compactenum}
\end{remark}
\begin{proof}
\begin{compactenum}
\item First we show $T\p(VP) \subs \ker(T\ol{q})$. For $v\in VP$ we get $T\ol{q} (T \p(v))= T \ol{q} \ci \p (v) = T q (v) =0$.  To see $\ker(T\ol{q}) \subs T\p(VP)$ let $T\ol{q} (w) = 0$. We find $v\in TP$ with $T\p (v) = w$. Hence $T\ol{q}(T\p(v)) = T\ol{q} \ci \p  (v) = Tq(v) = 0$. Thus $v\in VP$ and so $w\in T\p(VP)$.
Now we show that $T\p(HP)$ is a smooth sub vector bundle of $T\ol{P}$. Let $\ol{x}\in \ol{P}$.  Obviously $(T\p(HP))_{\ol{x}} := T_{\ol{x}}\ol{P} \cap T\pi(HP)$ is closed under scalar multiplication. Let $v,w \in (T\p(HP))_{\ol{x}} = T_x\ol{P}\cap T\p(HP)$. We find $p_1,p_2 \in P$, ${v}_1\in H_{p_1}P$ and $w_2 \in H_{p_2}P$ with $T_{p_1}\pi (v_1) = v$ and $T_{p_2}\pi (w_2) = w$. Hence $\pi(p_1)=\ol{x}=\pi(p_2)$. Therefore we find $n \in N$ with $p_1=p_2\cdot n$ and $\til{w}\in T_{p_1}P$ with $TR_n(\til{w}) = w_2$. Now we calculate 
\begin{align*}
&v+w = T_{p_1}\pi(v_1) + T_{p_2} \pi (w_2) = T_{p_1}\pi(v_1) + T_{p_2} \pi \ci T_{p_1}R_n (\til{w}) \\
= &T_{p_1}\pi(v_1) + T_{p_1} \pi \ci TR_n (\til{w}) = T_{p_1}\pi(v_1) + T_{p_1} \pi (\til{w}) = T_{p_1}\pi (v_1 +\til{w}).
\end{align*}
Now we can show that $HP$ is a smooth sub vector bundle. Let $\ol{p} \in \ol{P}$. Because $\pi$ is a submersion, we find a smooth local section $\ta \til{V} \ra P$ of $\pi$ on an open $\ol{p}$-neighbourhood $\til{V}\subs \ol{P}$. We define $p:=\tau (\ol{p})$ and find a smooth local frame $\sigma_1, \dots ,\sigma_m \co \til{U} \ra TP$ of the smooth sub vector bundle $HP$ on a $p$-neighbourhood $\til{U}\subs P$. Without loose of generality we can assume $\tau (\til{V}) \subs \til{U}$. Given $i \in \set{1,\dots, m}$ we define the smooth map
\begin{align*}
\ol{\sigma}_i \co \til{V} \ra T\ol{P},~ \ol{x}\ms T\pi(\sigma_i \ci \tau (\ol{x})).
\end{align*}
The map $\ol{\sigma}_i$ is a section for the tangential bundle $T\ol{P}$, because given $\ol{x}\in \til{V}$ we have $\sigma_i \ci \tau (\ol{x}) \in T_{\tau(\ol{x})}P$ and so $\ol{\sigma}_i (\ol{x}) \in T_{\pi(\tau(\ol{x}))} \ol{P} = T_{\ol{x}} \ol{P}$. Let $\ol{x}\in \til{V}$. Next we show  that $(\sigma_i(\ol{x}))_{i=1,\dots,m}$ is a basis of $(T\p(HP))_{\ol{x}} = T_{\ol{x}} \ol{P}\cap T\p(HP)$. Let $\l_i \in \R$ with $\sum_{i=1}^m \l_i \cdot \ol{\sigma}(\ol{x}) =0$. We conclude $T_{\tau(\ol{x})} \pi (\sum_{i=1}^m \l_i \cdot {\sigma}_i(\tau( \ol{x})))=0$. And so 
\begin{align*}
Tq\left(\sum_{i=1}^m \l_i \cdot {\sigma}_i(\tau( \ol{x}))\right) = T\ol{q} \left(T\pi\left(\sum_{i=1}^m \l_i \cdot {\sigma}_i(\tau( \ol{x}))\right)\right) = 0.
\end{align*}
Therefore $\sum_{i=1}^m \l_i \cdot {\sigma}_i(\tau( \ol{x})) \in V_{\tau(\ol{x})}P$. And hence $\l_i =0$ for $i=1,\dots,m$. Let $p \in P$ with $\pi(p)=\ol{x}$. One easily sees that the linear map $(T_p\pi)|_{H_pP} \co H_pP \ra (T\p(HP))_x$ is a surjection (see above). Because $m = \dim(H_pP)$ the linear independent system $\ol{\sigma}_i(\ol{x})_{i=1, \dots,m}$ is a basis of $(T\pi(HP))_{\ol{x}}$. Now lets show that $H\ol{P}:= T\pi(HP)$ is a principal connection on $\ol{P}$. Because $\p$ is a submersion and $T_pP = H_pP \oplus V_pP$ we get $V_{\ol{x}}\ol{P} + H_{\ol{x}}\ol{P} = T_{\ol{x}} \ol{P}$ for $\ol{x}\in \ol{P}$. If $T_{p}\p(v)=T_{p'}\p(w)$ with $v\in V_pP$, $w\in H_{p'}P$ and $\p(p)=\p(p')=:\ol{p}$ we get
\begin{align*}
T_{\ol{p}}\ol{q}\ci \p(v) = T_{\ol{p}}\ol{q} \ci \p(w). 
\end{align*}
Hence $0=Tq(v)=Tq(w)$. Thus $w \in V_{p'}P$. Therefore $w =0$ and so $T_p\p(v)=T_{p'}(w)=0$ in $T_{\ol{p}}\ol{P}$. We conclude $V\ol{P} \oplus H\ol{P} = T\ol{P}$. It is left to show that $H\ol{P}$ is invariant under the action of $\ol{H}$. Obviously it is enough to show $T_{\ol{x}}\ol{R}_{[g]} (H_{\ol{x}}P) \subs H_{\ol{x}[g]}\ol{P}$ for $\ol{x} \in \ol{P}$ and $[g]\in \ol{H}$. Let $v\in H_{\ol{x}}\ol{P}$. We find $p \in P$ and $w \in H_pP$ with $v= T_p\pi(w)$. With $\ol{R}_{[g]}\ci \pi = \pi \ci R_g$ and $\pi(pg)= \ol{x}.[g]$ we calculate
\begin{align*}
&T\ol{R}_{[g]}(v) = T_p(\ol{R}_{[g]} \ci \pi)(w) =  T_{pg}\pi \ci T_pR_g(w) \in T_{pg} \pi (H_{pg}P)\\ 
\subs& T_{\ol{x}[g]}\ol{P}\cap T\pi(HP) = H_{\ol{x}[g]} \ol{P}. 
\end{align*}
\- First we show that $\p^\ast$ makes sense. Without loss of generality we assume $k =1$. Let $\t \in \Omega^1(\ol{P},V)_{\ol{\r}_V}^\hor$. We have $\p \ci R_g = \ol{R}_{[g]} \ci \p$. Hence 
\begin{align*}
&\r_V(g) \ci R_g^\ast \p^\ast \t = \ol{\r}_V([g]) \ci (\p\ci R_g)^\ast \t = \ol{\r}_V([g]) \ci (\ol{R}_{[g]} \ci \p)^\ast \t\\
=&\p^\ast(\ol{\r}_V([g]) \ci \ol{R}_{[g]} ^\ast \t) = \p^\ast \t.
\end{align*}
Moreover  if $v\in V_pP$ we get $T_p\p(v) \in V_{\p(p)}\ol{P}$ and so $\p^\ast \t_p(v) = \t_{\p(p)} (T_p\p(v)) =0$.
We show that $\p^\ast$ is bijective. 
It is clear that $\p^\ast$ is injective, because $\pi$ is a submersion. To see that $\p^\ast$ is surjective let $\e \in \Omega^1(P,V)_{\r_V}^\hor$, we define $\t \in \Omega^1(\ol{P},V)^\hor_{\ol{\r}_V}$ by $\t_{\p(p)}(T_p\p(v)) := \e_p(v)$ for $p \in P$ and $v\in T_pP$. To see that this is well-defined, we choose $p,r\in P$, $v\in T_pP$ and $w\in T_rP$ with $\p(p)=\p(r)$ and $T_p\p(v) = T_r\p(w)$. We find $ n\in N$ with $p = r.n$. Because $\e_{r.n} (T_rR_n(w)) = \e_r(w)$ ($N=\ker(\r_V)$), it is enough to show $\e_p(v) = \e_p(T_rR_n(w))$. We have $\p \ci R_n= R_{[n]}\ci\p =  \p$. Hence $T\p \ci TR_n = T\p$. Thus $T_p\p(T_rR_n(w)) = T_r\p(w) = T_p\p(v)$. Therefore we find $x \in \ker(T_p\p)$ with $T_rR_n(w)+x =v$ in $T_pP$. Hence $T_pq(x) = 0$, because $Tq = T\ol{q} \ci T\p$.  So $x \in V_pP$ and hence $\e_p(x)=0$. The form $\t$ is $\ol{\r}_V$-invariant because for $g\in H$, $p \in P$ and $v\in T_pP$ we get
\begin{align*}
&(\ol{\r}_V([g])\ci \ol{R}_{[g]}^\ast \t)_{\p(p)} (T_p\p(v)) = \ol{\r}_V([g])\ci \t_{\p(p).[g]} (T\ol{R}_{[g]}(T_p\p(v)))\\
=& \r_V(g) \ci \t_{\p(p.g)}(T_{p.g}\p(TR_g(v)))\\
=&\r_V(g)\ci \e_{p.g}(TR_g(v)) = \t_{\p(p)}(T\p(v)).
\end{align*}
Moreover $\t$ is  horizontal because given $u \in V_{\ol{p}}\ol{P}$ with $\ol{p}\in \ol{P}$ we find $p \in P$ with $\p(p)= \ol{p}$ and $v\in V_pP$ with $u = T_p\p(v)$. Hence $\t_{\ol{p}}(u)= \t_{\p(p)}(T\p(v)) = \e_p(v)=0$. Obviously we have $\p^\ast \t =\e$.  
In order to show that $\p^\ast$ is an isomorphism of chain complexes we choose $p\in P$, and $v,w\in T_pP$ and calculate
\begin{align*}
&(\p^\ast D_{\ol{\r}_V}\t)_p(v,w) = (D_{\ol{\r}_V}\t)_{\p(p)}(T\p(v),T\p(w))\\
=& (d\t)_{\p(p)} (\pr_h\ci T\p (v),\pr_h\ci T\p (w)) = (d\t)_{\p(p)} (T\p \ci \pr_h(v),T\p \ci \pr_h(w))\\
=&(\p^\ast d\t)_p(\pr_h(v), \pr_h(w)) =(D_{\r_V}\p^\ast \t)_p(v,w).
\end{align*}
It is left to show that $\p^\ast$ is a homeomorphism. Because the corresponding spaces are Fr{\'e}chet-spaces it is enough to show the continuity of $\p^\ast$. We can embed $\Omega^k(\ol{P},V)_{\ol{\rho}_V}^\hor$ into $C^\8(T\ol{P}^k,V)$ and $\Omega^k(P,V)_{\rho_V}^\hor$ into $C^\8(TP^k,V)$. The map $C^\8(T\ol{P}^k,V) \ra C^\8(TP^k,V)$, $f\ms f\ci (T\pi \ti \cdots \ti T\pi)$ is continuous (see \cite{Glockner:a}). Now the assertion follows.
\- This follows from (b) and the fact that $\p^\ast(\Omega_K^k(\ol{P},V)_{\ol{\r}_V}^\hor) = \Omega^k_K(P,V)_{\r_V}^\hor$ for a compact set $K\subs M$.
\end{compactenum}
\end{proof}

Also the statement in the following lemma seems to be well-known, but we did not find a source for this exact result. It's proof uses techniques from the proof of \cite[Theorem 1.5]{Rosenberg:1997}. See also \cite[Chapter 6]{Bott:1982}.

\begin{lemma}\label{GruIsoEndlCover}
If $q \co \hat{M} \ra M$ is a smooth finite manifold covering, then $q^\ast\co \Omega^1_c(M,V) \ra \Omega^1_c(\hat{M}, V),~ \t \ms q^\ast \t$ leads to a well-defined isomorphism of topological vector spaces $H^1_{dR,c}(M,V) \ra H^1_{dR,c}(\hat{M},V),~[\t] \ms [q^\ast \t]$. Therefore $\ol{q}^\ast\co H^1_{dR,c}(M,V) \ra H^1_{dR,c}(\ol{P}, V),~ \t \ms \ol{q}^\ast \t$ is an isomorphism of topological vector spaces.
\end{lemma}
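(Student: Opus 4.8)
The plan is to prove the general statement---that $q^\ast$ induces an isomorphism $H^1_{dR,c}(M,V)\ra H^1_{dR,c}(\hat M,V)$ for every smooth finite covering $q\co \hat M\ra M$---from which the second assertion follows at once as the special case $\hat M=\ol P$ (indeed $\ol q\co \ol P\ra M$ is a finite covering, since $\ol H$ is finite and $\ol P$ is connected). Throughout, let $n$ denote the number of sheets, which is constant because $M$ is connected.

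The main tool I would introduce is the \emph{transfer} (fibrewise sum) map $q_!\co \Omega^k_c(\hat M,V)\ra \Omega^k_c(M,V)$. Over an evenly covered open set $U\subs M$, write $q\inv(U)=U_1\sqcup\dots\sqcup U_n$ with each $q|_{U_j}\co U_j\ra U$ a diffeomorphism, and put $(q_!\t)|_U=\sum_{j=1}^n\bigl((q|_{U_j})\inv\bigr)^\ast(\t|_{U_j})$. A partition-of-unity argument shows this is independent of the trivialisation and glues to a global form; since $q$ is a local diffeomorphism $q_!$ commutes with the exterior derivative, and since $q$ is proper (finite fibres) it preserves compact supports and is continuous for the inductive-limit topologies. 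The decisive identity is $q_!\ci q^\ast=n\.\id$ on $\Omega^k_c(M,V)$, so on cohomology $\frac1n q_!$ is a continuous left inverse of $q^\ast$. This already shows that $[\t]\ms[q^\ast\t]$ is well defined, continuous and injective, with complemented image.

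For the reverse inclusion I would pass to the Galois setting. The covering $\ol q\co \ol P\ra M$ is regular, with deck group $G:=\ol H$ acting freely on $\hat M=\ol P$ and $M=\hat M/G$ (a general finite cover may first be replaced by its Galois closure). Invariant forms descend along $q$, so $q^\ast$ identifies $\Omega^k_c(M,V)$ with the space $\Omega^k_c(\hat M,V)^G$ of $G$-invariant forms, while the averaging operator $A:=\frac{1}{\#G}\sum_{g\in G}g^\ast$ is a continuous, $d$-commuting projection onto it (and in fact $A=q^\ast\ci\frac1n q_!$). As averaging over the finite group $G$ is exact over $\R$, the operator $A$ descends to cohomology and yields a natural isomorphism of topological vector spaces $q^\ast\co H^1_{dR,c}(M,V)\xra{\cong}H^1_{dR,c}(\hat M,V)^G$; bicontinuity follows from continuity of $q^\ast$, $q_!$ and $A$, the relevant quotients being Hausdorff locally convex spaces by the earlier closedness results.

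The step I expect to be the \textbf{main obstacle} is the last one: upgrading the isomorphism onto the invariants $H^1_{dR,c}(\hat M,V)^G$ to an isomorphism onto the \emph{whole} space $H^1_{dR,c}(\hat M,V)$. Equivalently, one must show that the finite deck group $G=\ol H$ acts trivially on $H^1_{dR,c}(\hat M,V)$, so that $H^1_{dR,c}(\hat M,V)^G=H^1_{dR,c}(\hat M,V)$. The transfer argument alone gives only injectivity together with the identification of the image as the invariants; verifying that the $\ol H$-action on the cohomology is trivial is the genuinely substantial point, and is where one must use more than the mere finiteness of $\ol H$---namely the specific geometry of the covering $\ol P\ra M$.
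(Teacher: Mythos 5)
Your transfer map $q_!$ is, up to the normalising factor $\frac{1}{n}$, exactly the map $q_\ast$ that the paper constructs over evenly covered neighbourhoods, and your identity $q_!\ci q^\ast=n\.\id$ is the paper's $q_\ast\ci q^\ast=\id$; so the first half of your argument (well-definedness, continuity and injectivity of $[\t]\ms[q^\ast\t]$, with $\frac{1}{n}q_!$ a continuous left inverse on cohomology) coincides with the paper's. The problem is the second half: you do not prove surjectivity, you only reduce it to the assertion that the deck group acts trivially on $H^1_{dR,c}(\hat M,V)$, and you explicitly leave that assertion open, so as it stands the proposal is an incomplete proof. Worse, the missing step is not a routine verification: for a general finite covering it is \emph{false}. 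For the orientation double cover $q\co S^1\ti\R\ra Mo$ of the open M\"obius band one has $H^1_{dR,c}(Mo,\R)=0$ while $H^1_{dR,c}(S^1\ti\R,\R)\cong\R$ (generated by $[\r(t)\,dt]$ for a bump function $\r$), and the deck involution $(s,t)\ms(s+1,-t)$ acts on this class by $-1$; thus $q^\ast$ is injective with image the invariants, but it is not surjective. Hence no argument can close your gap using only the hypotheses stated in the lemma, and the extra input you ask for ("the specific geometry of the covering $\ol P\ra M$") would have to exclude precisely this kind of example.

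It is worth recording that your diagnosis is sharper than the paper's own proof. After establishing $q_\ast\ci q^\ast=\id$, the paper claims to finish by showing $q_\ast\bigl(B^1_{dR,c}(\hat M,V)\bigr)=B^1_{dR,c}(M,V)$, via the computation $q_\ast(dq^\ast f)=df$. That only shows that $q_\ast$ maps exact forms \emph{onto} exact forms; it does not show that a closed compactly supported $\t$ on $\hat M$ with $q_\ast\t$ exact is itself exact, which is what injectivity of $q_\ast$ on cohomology --- equivalently surjectivity of $q^\ast$, since $q_\ast\ci q^\ast=\id$ --- actually requires. The point at which the paper's argument breaks is precisely the point you isolate: whether the kernel of the averaging operator $A=q^\ast\ci q_\ast$ vanishes in $H^1_{dR,c}(\hat M,V)$, i.e.\ whether the invariants exhaust the cohomology. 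So: same transfer construction and same first half as the paper, but neither your proposal nor the paper supplies the decisive second half, and the M\"obius example shows that an additional hypothesis on the covering (for instance, that $\ol H$ acts trivially on $H^1_{dR,c}(\ol P,V)$, or an orientability/monodromy condition guaranteeing it) is genuinely needed for the lemma to hold.
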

\begin{proof}
We use the notation $\Omega_K^k(\hat{M},V):= \set{\t \in \Omega^k(\hat{M},V)| \supp(\t)\subs q\inv(K)}$ for a compact subset $K\subs M$. Let $n$ be the order of the covering. 
The first step is to define a continuous linear map $q_\ast \co \Omega_c^k(\hat{M},V) \ra  \Omega_c^k(M,V)$ for $k \in \N_0$. Without loss of generality let $k=1$. Let $\t \in \Omega^1_c(\hat{M},V)$. Given $y\in M$ we find a $y$-neighbourhood $V_y \subs M$ that is evenly  covered by open sets $U_{y,i}\subs \hat{M}$ with $i=1,\:,n$. We have diffeomorphisms $q_i^y:=q|_{U_{y,i}}^{V_y}$. Then 
\begin{align*}
\til{\t}^y:= \fr 1n \sum_{i=1}^n (q_i^y)_\ast \t|_{U_{y,i}}
\end{align*}
is a form on $V_y$ with $(q_i^y)_\ast \t|_{U_{y,i}} = \t(T{q_i^y}\inv(v))$ for $x\in V_y$ and $v \in T_xV_y$. We define $q_\ast \t:= \til{\t} \in \Omega^1_c(\hat{M},V)$ by $\til{\t}_x:=\til{\t}^y_x$ for $x\in V_y$. Now we show that this is a well-defined map. Let $x\in V_y \cap V_{y'}$ for $y' \in M$ with a $y'$-neighbourhood $V_{y'}$ that is evenly  covered by $(U_{y',i})_{i=1,..,n}$. After renumbering the sets $U_{y',i}$ we get
\begin{align*}
q|_{U_{y,i}} \inv  = q|_{U_{y',i}}\inv
\end{align*}
on $V_y \cap V_{y'}$ for $i=1,\:,n$. Hence 
\begin{align*}
\til{\t}^y_x = \fr 1n \sum_i ((q_i^y)_\ast \t|_{U_{y,i}})_x = \fr 1n \sum_i ((q_i^{y'})_\ast \t|_{U_{y',i}})_x = \til{\t}^{y'}_x \tx{for} x\in V_{y}\cap V_{y'}.
\end{align*}
We note that $q$ is a proper map, because it is a finite covering. Let $\supp (\t)\subs q\inv(K)$ for a compact set $K \subs M$. If $y \notin  K$ then $q\inv(\set{y}) \cap q\inv(K) = \emptyset$. Hence $q\inv(\set{y}) \cap \supp(\t) =\emptyset$. It follows 
\begin{align*}
q_\ast \t_y = \til{\t}_y^y = \fr 1n \sum_i ((q|_{U_i^y})_\ast \t|_{U_{y,i}})_y = \fr 1n \sum_i \t_{q|_{U_i}\inv (y)} =0.
\end{align*}
Hence $M\setminus K \subs M\setminus \set{x\in M: q_\ast \t_x\neq 0}$. Therefore $\set{x\in M: q_\ast \t_x \neq 0} \subs K$ and so $\supp(q_\ast \t) \subs K$.  
Obviously $q_\ast$ is linear. Moreover $q_\ast$ is continuous because the analogous map form $\Omega^1(\hat{M},V)$ to $\Omega^1(M,V)$ is continuous and $q_\ast(\Omega_K^1(\hat{M}, V)) \subs \Omega_K^1(M,V)$.
Moreover $q_\ast$ is a homomorphism of chain complexes: Given $y\in M$, $v,w \in T_yM$ we calculate
\begin{align*}
(q_\ast d\t)_y(v,w) = \fr 1n \sum_i ((q_i^y)_\ast d\t|_{U_{y,i}})_y(v,w) = \fr 1n \sum_i ( d (q_i^y)_\ast \t|_{U_{y,i}})_y(v,w) = (d q_\ast \t)_y(v,w).
\end{align*} 
Now we show 
\begin{align}\label{Grusur}
q_\ast \ci q^\ast = \id_{\Omega^1_c(M,V)}.
\end{align}
Given $\t \in \Omega_c^1(M,V)$, $y \in M$ and $v\in T_yM$ we calculate
\begin{align*}
&(q_\ast q^\ast \t)_y(v) =\fr 1n \sum_i ({q_i^y}_\ast q^\ast \t|_{U_{y,i}})_y(v) = \fr 1n \sum_i (q^\ast \t|_{U_{y,i}})_{{q_i^y}\inv (y)} (T{q_i^y}\inv (v))\\
=&\fr 1n \sum_i  \t_{q({q_i^y}\inv (y))} (Tq\ci{q_i^y}\inv (v)) = \t_y(v).
\end{align*}
Hence $q_\ast \ci q^\ast = \id_{\Omega^1_c(M,V)}$.
We know that $q^\ast$ factorises to a continuous linear map $q^\ast \co H^1_{dR,c}(M,V)\ra H^1_{dR,c}(\hat{M},V)$ and because $q_\ast$ is a homomorphism of chain complexes we get a map  $q_\ast \co H^1_{dR,c}(\hat{M},V) \ra H^1_{dR,c}(M,V)$.
With equation (\ref{Grusur}) we see 
\begin{align*}
q_\ast \ci q^\ast = \id_{H^1_{dR,c}(M,V)}.
\end{align*}
Hence $q_\ast$ is surjective. It remains to show that $q_\ast \co H^1_{dR,c}(M,V) \ra H^1_{dR,c}(\hat{M},V)$ is also injective. To this end we show $q_\ast (B^1_c(\hat{M},V)) = B^1_c(M,V)$. Given $f\in C^\8_c(M,V)$ we calculate
\begin{align*}
q_\ast (d q^\ast f) =q_\ast  q^\ast df =df.
\end{align*}
\end{proof}

The proof of Lemma \ref{GrugeschlForm} is similar to the proof of \cite[Lemma II.10 (1)]{Neeb:2004}.
\begin{lemma}\label{GrugeschlForm}
Let $M$ be a connected finite-dimensional manifold, $E$ be a finite-dimensional vector space and $\t \in \Omega^1(M,E)$. If  for all closed smooth curves $\a_0,\a_1 \co [0,1]\ra M$ such that $\a_0$ is homotopy to $\a_1$ relative $\set{0,1}$  we get
\begin{align*}
\int_{\a_0} \t = \int_{\a_1} \t,
\end{align*}
then $\t \in Z^1_{dR}(M,E)$.
\end{lemma}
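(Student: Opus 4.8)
The plan is to prove directly that $d\t = 0$, which is precisely the statement $\t \in Z^1_{dR}(M,E)$. Since closedness is a pointwise condition, it suffices to show $(d\t)_p(v,w) = 0$ for every $p \in M$ and all $v,w \in T_pM$; in particular the connectedness of $M$ plays no role in this direction. Following \cite[Lemma II.10 (1)]{Neeb:2004}, the guiding idea is that a small smooth loop encircling $p$ inside a chart is homotopic, relative to its base point, to the constant loop at $p$. By the hypothesis its $\t$-integral therefore equals that of the constant loop, namely $0$; on the other hand Stokes' theorem rewrites this integral as the integral of $d\t$ over a small spanning surface, whose leading term as the loop shrinks is a nonzero multiple of $(d\t)_p(v,w)$. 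Comparing the two expressions forces $(d\t)_p(v,w)=0$.

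To make this precise, I would fix $p$, $v$, $w$ and choose a chart identifying a neighbourhood of $p$ with an open ball $B\subs\R^m$ about the origin, so that $p \leftrightarrow 0$ and $v,w$ become vectors in $\R^m$. For small $r>0$ I would use the smooth closed curve
\begin{align*}
\g_r\co [0,1] \ra B,\quad \g_r(u) = r(\cos 2\p u - 1)\,v + r\sin 2\p u\,w,
\end{align*}
which satisfies $\g_r(0)=\g_r(1)=0$, together with the smooth map
\begin{align*}
F_r\co [0,1]\ti[0,1] \ra B,\quad F_r(s,u) = s\,\g_r(u).
\end{align*}
Since $F_r(s,0)=F_r(s,1)=0=p$ for all $s$, while $F_r(1,\bl)=\g_r$ and $F_r(0,\bl)$ is the constant loop $c_p$, the map $F_r$ is a smooth homotopy relative $\set{0,1}$ between the closed smooth curves $\g_r$ and $c_p$. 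The hypothesis then gives $\int_{\g_r}\t = \int_{c_p}\t = 0$.

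Next I would apply Stokes' theorem to $F_r$. Three of the four edges of $\partial([0,1]^2)$ are mapped to the constant point $p$, so they contribute nothing and
\begin{align*}
\int_{[0,1]^2} F_r^\ast d\t = \int_{\partial([0,1]^2)} F_r^\ast \t = \pm\int_{\g_r}\t = 0.
\end{align*}
On the other hand $\partial_s F_r = \g_r(u)$ and $\partial_u F_r = s\,\g_r'(u)$ are both of order $r$, while $F_r(s,u)\ra p$ as $r\ra 0$; hence, by continuity of $d\t$ and an explicit evaluation using the bilinearity and antisymmetry of $(d\t)_p$, one obtains
\begin{align*}
\int_{[0,1]^2} F_r^\ast d\t = \p\,(d\t)_p(v,w)\,r^2 + o(r^2).
\end{align*}
Comparing the two displays yields $\p\,(d\t)_p(v,w)\,r^2 = o(r^2)$, so $(d\t)_p(v,w)=0$. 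As $p,v,w$ were arbitrary, $d\t = 0$.

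The one delicate point — and the reason the obvious candidate, the boundary of a coordinate rectangle, cannot be used verbatim — is that the hypothesis is stated for genuinely smooth curves, whereas a rectangular loop has corners. This is exactly what dictates the smooth trigonometric parametrisation of $\g_r$ and the smooth contraction $F_r$ above. With these chosen, the only remaining work is the evaluation of the leading coefficient, which is routine; the hypothesis itself enters solely through the comparison of $\g_r$ with the constant loop, exactly as in \cite[Lemma II.10 (1)]{Neeb:2004}.
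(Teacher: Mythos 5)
Your argument is correct, but it takes a genuinely different route from the paper. The paper passes to the universal covering $q\colon \widetilde{M}\to M$: every smooth loop in $\widetilde{M}$ is null-homotopic, so by the hypothesis its projection has vanishing $\theta$-integral, whence $q^{\ast}\theta$ is conservative and therefore exact on $\widetilde{M}$; then $q^{\ast}d\theta = d q^{\ast}\theta = 0$, and since $q$ is a submersion this forces $d\theta = 0$. You instead verify $(d\theta)_p(v,w)=0$ directly at each point by contracting a small smooth trigonometric loop to the constant loop, applying the hypothesis to kill the boundary integral, and extracting the leading term $\pi\,(d\theta)_p(v,w)\,r^2$ via Stokes' theorem on the square (your coefficient computation checks out: the integrand reduces to $2\pi r^2 s\,(1-\cos 2\pi u)\,(d\theta)(v,w)$ up to $o(r^2)$, whose integral is $\pi r^2 (d\theta)_p(v,w)$). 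Your approach is more elementary and purely local: it avoids covering-space theory and the ``conservative implies exact'' lemma, and, as you note, it does not use connectedness of $M$ at all. The price is the explicit asymptotic evaluation, and your care with the corner issue (using a smooth loop rather than the boundary of a rectangle) is exactly the right precaution given that the hypothesis quantifies only over smooth curves. The paper's proof is shorter modulo standard facts, but both are complete.
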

\begin{proof}
Let $q\co \til{M} \ra M$ be the universal smooth covering of $M$. 
First we show that $q^\ast \t$ is exact. To this end we show that $q^\ast \t$ is conservative. Let $\g\co [0,1] \ra \til{M}$ be a smooth closed curve in a point $p_0 \in \til{M}$ and $q(p_0)=:x_0 \in M$. Because $\til{M}$ is simply connected, we find  a homotopy $H$ from $\g$ to $c_{p_0}$ relative $\set{0,1}$. Hence $q\ci \g$ is homotopy to $c_{x_0} = q\ci c_{p_0}$ relative $\set{0,1}$. Therefore we get
\begin{align*}
\int_\g q^\ast \t = \int_{[0,1]} \g^\ast q^\ast \t = \int_{[0,1]} (q\ci \g)^\ast \t \ub{=}{(\ast)} \int_{[0,1]} c_{x_0}^\ast \t =0.
\end{align*}
Equation $(\ast)$ follows from the assumptions of the lemma.
Because $q^\ast \t$ is exact we find  $f\in C^\8(\til{M},E)$ with $q^\ast \t =df$. Hence we get
\begin{align*}
q^\ast d\t = dq^\ast \t = ddf =0.
\end{align*}
Therefore $d\t =0$ because $q$ is a submersion.
\end{proof}

\begin{definition}\label{GrustrictSES}
Let $M$ be a connected $\sigma$-compact finite-dimensional manifold. Using \cite[Lemma IV.4]{Neeb:2004} we find a sequence $(K_n)_{n\in \N}$ of compact equidimensional submanifolds with boundaries of $M$ such that $K_n\subs \mathring{K}_{n+1}$, $\bigcup_{n\in \N} K_n = M$ and the connected components of $M\setminus K_n$ are not relative compact in $M$. We call such a sequence $(K_n)_{n\in \N}$ a {\it saturated exhaustive sequence}. The sequence is called {\it strict} if there exists $N\in \N$ such that for all $n\geq N$ the canonical map $\p_0(M\setminus K_{n+1}) \ra \p_0(M\setminus K_n)$ is injective.
\end{definition}

\begin{lemma}\label{GruFolgenvoll}
Let $M$ be a $\sigma$-compact finite-dimensional manifold, $V$ a finite-dimensional vector space and $(K_n)_{n\in \N}$ a strict saturated exhaustive sequence for $M$. Then the space $\go_c^1(M,V)/dC_c^\8(M,V)$ is complete.
\end{lemma}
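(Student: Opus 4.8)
The plan is to exhibit $\go_c^1(M,V)/dC_c^\8(M,V)$ as a strict inductive limit of a sequence of Fréchet spaces; completeness then follows from the classical theorem of Dieudonné and Schwartz, according to which a strict inductive limit of a sequence of complete locally convex spaces is complete. Writing $\go^1_{K_n}(M,V)$ and $C^\8_{K_n}(M,V)$ for the forms, respectively functions, with support contained in $K_n$, we have the strict inductive limit $\go^1_c(M,V) = \varinjlim_n \go^1_{K_n}(M,V)$ of Fréchet spaces, in which each $\go^1_{K_n}(M,V)$ is a closed topological subspace of $\go^1_{K_{n+1}}(M,V)$. Since the quotient topology is final for the quotient map and finality composes with the inductive limit, $\go^1_c(M,V)/dC^\8_c(M,V) = \varinjlim_n E_n$ with $E_n := \go^1_{K_n}(M,V)/\bigl(\go^1_{K_n}(M,V)\ca dC^\8_c(M,V)\bigr)$.

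First I would identify the intersection $\go^1_{K_n}(M,V)\ca dC^\8_c(M,V)$, and this is where the \emph{saturation} of the sequence enters. If $df \in \go^1_{K_n}(M,V)$ for some $f\in C^\8_c(M,V)$, then $f$ is locally constant, hence constant, on each connected component of $M\sm K_n$; since every such component is non-compact while $f$ has compact support, $f$ vanishes on $M\sm K_n$, i.e. $f\in C^\8_{K_n}(M,V)$. Consequently $\go^1_{K_n}(M,V)\ca dC^\8_c(M,V) = dC^\8_{K_n}(M,V)$. Because $dC^\8_c(M,V)$ is closed in $\go^1_c(M,V)$ by \cite[Lemma IV.11]{Neeb:2004} and $\go^1_{K_n}(M,V)\hra \go^1_c(M,V)$ is a topological embedding, $dC^\8_{K_n}(M,V)$ is closed in the Fréchet space $\go^1_{K_n}(M,V)$, so each $E_n = \go^1_{K_n}(M,V)/dC^\8_{K_n}(M,V)$ is itself a Fréchet space.

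It remains to show that the limit $\varinjlim_n E_n$ is strict, i.e. that each bonding map $E_n \ra E_{n+1}$, $[\omega]\ms[\omega]$, is a topological embedding with closed image. Injectivity is immediate: if $\omega\in \go^1_{K_n}(M,V)$ lies in $dC^\8_{K_{n+1}}(M,V)\subs dC^\8_c(M,V)$, then $\omega\in \go^1_{K_n}(M,V)\ca dC^\8_c(M,V)=dC^\8_{K_n}(M,V)$, so $[\omega]=0$ already in $E_n$. The bonding map is thus a continuous injective linear map between Fréchet spaces, and by the open mapping theorem both the embedding property and the closedness needed for a strict LF-space reduce to the \emph{single} statement that the image of $E_n$ in $E_{n+1}$ be closed, equivalently that $\go^1_{K_n}(M,V)+dC^\8_{K_{n+1}}(M,V)$ be closed in $\go^1_{K_{n+1}}(M,V)$. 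Granting this, the $E_n$ form an increasing sequence of Fréchet spaces, each a closed topological subspace of the next, so $\varinjlim_n E_n$ is a strict LF-space and hence complete.

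The \textbf{main obstacle} is precisely this last closedness, and it is here that the \emph{strictness} of the saturated exhaustive sequence is indispensable. Passing to the quotient $r\co\go^1_{K_{n+1}}(M,V)\ra \go^1_{K_{n+1}}(M,V)/\go^1_{K_n}(M,V)$, the space $\go^1_{K_n}(M,V)+dC^\8_{K_{n+1}}(M,V)$ equals $r\inv\bigl(r(dC^\8_{K_{n+1}}(M,V))\bigr)$, so the task is to show that the image of the exact forms is closed in the quotient, a statement about the first de Rham cohomology of the annular region $K_{n+1}\sm \mathring{K}_n$ relative to its outer boundary. The eventual injectivity of the maps $\p_0(M\sm K_{n+1})\ra \p_0(M\sm K_n)$ stabilises the combinatorics of the ends of $M$ and thereby renders the relevant obstruction space finite-dimensional; I would carry out the closedness proof for $n\geq N$ by reducing it to this finite-dimensional space of locally constant functions on the complements together with the closedness already supplied by \cite[Lemma IV.11]{Neeb:2004}, transported from $M$ to the exhausting pieces.
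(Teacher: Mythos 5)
Your setup is correct and coincides with the paper's: the passage to the strict inductive limit $\varinjlim_n E_n$, the identification $\go^1_{K_n}(M,V)\ca dC^\8_c(M,V)=dC^\8_{K_n}(M,V)$ via saturation, the observation that each $E_n$ is Fr\'echet, and the reduction (via the open mapping theorem) to the single claim that $\go^1_{K_n}(M,V)+dC^\8_{K_{n+1}}(M,V)$ is closed in $\go^1_{K_{n+1}}(M,V)$. But that claim is the actual content of the lemma, and your last paragraph does not prove it. You announce a reduction to a ``finite-dimensional obstruction space'' of locally constant functions without carrying it out, and the inference you appeal to is not available: a linear subspace of finite codimension (in the ambient space or in its own closure) need not be closed in a Fr\'echet space --- the kernel of a discontinuous linear functional is a dense hyperplane --- so the stabilisation of $\p_0$ of the ends does not by itself yield closedness. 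Something must produce \emph{continuous} functionals that cut the subspace out.

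That is precisely what the paper's proof supplies. For each smooth loop $\a\in C^\8(\mathbb{S}^1,M\sm\mathring{K}_n)$ the map $I_\a\co\ol{\go}^1_{K_{n+1}}(M,V)\ra V$, $[\t]\ms\int_\a\t$, is continuous and linear, and one shows that the image of $\go^1_{K_n}(M,V)$ in $\ol{\go}^1_{K_{n+1}}(M,V)$ equals $\bigcap_\a I_\a\inv(\set{0})$, hence is closed. The nontrivial inclusion is where strictness enters: if all these integrals vanish, then $\t|_{M\sm\mathring{K}_n}$ is conservative, so $\t|_{M\sm\mathring{K}_n}=df$ with $f$ locally constant on $M\sm K_{n+1}$; eventual injectivity of $\p_0(M\sm K_{n+1})\ra\p_0(M\sm K_n)$ guarantees that each component of $M\sm\mathring{K}_n$ contains at most one component of $M\sm K_{n+1}$, so one can subtract a locally constant function to arrange $\supp(f)\subs K_{n+1}$, extend $f$ smoothly to $\til{f}$ on $M$, and conclude $\t-d\til{f}\in\go^1_{K_n}(M,V)$. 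Until you replace your sketch by an argument of this kind, the proof has a genuine gap at its central step.
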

\begin{proof}
The space $\go_c^1(M,V)$ is the strict inductive limit of the Frechet spaces $(\go^1_{K_n}{M,V})_{n\geq N}$ where $N$ is chosen like in Definition \ref{GrustrictSES}. Using \cite[Lemma B.4]{Neeb:2004} and \cite[Lemma IV.10]{Neeb:2004} we get
\begin{align*}
&\ol{\go}_c^1(M,V):= \go_c^1(M,V)/dC_c^\8(M,V) = \underset{\ra}{\lim}\go^1_{K_n}{M,V} / (dC^\8_c(M,V)\cap \go^1_{K_n}{M,V})\\
=& \underset{\ra}{\lim}\go_{K_n}^1(M,V)/dC_{K_n}^\8(M,V).
\end{align*}
Because the spaces $\go_{K_n}^1(M,V)/dC_{K_n}^\8(M,V)=:\ol{\go}_{K_n}^1(M,V)$ are Frechet spaces, it is enough to show that the inductive limit is strict. Therefore  we have to show that the image 
$\go_{K_n}(M,V)+ dC^\8_{K_{n+1}}(M,V)/ dC^\8_{K_{n+1}}(M,V)$
of the continuous linear injection
$\ol{\go}^1_{K_n} \ra \ol{\go}^1_{K_{n+1}},~ [\t]\ms [\t]$
is closed in $\ol{\go}^1_{K_{n+1}}$. For $\a \in C^\8(\mathbb{S}^1,M)$ we define the continuous linear map 
\begin{align*}
I_\a \co \ol{\go}_{K_{n+1}}(M,V) \ra V,~ [\t]\ms \int_\a \t.
\end{align*}
We show
\begin{align*}
\ol{\go}_{K_n}^1(M,V) \cong \go_{K_n}(M,V)+ dC^\8_{K_{n+1}}(M,V)/ dC^\8_{K_{n+1}}(M,V)=  \bigcap_{\a \in C^\8(\mathbb{S}^1, M\setminus \mathring{K}_n)} I_\a \inv (\set{0}).
\end{align*}
One inclusion is trivial. We mention that $M\setminus \mathring{K}_n$ is a submanifold with boundary of $M$. Now let $[\t] \in \ol{\go}^1_{K_{n+1}}(M,V)$ and $\int_\a \t|_{M\setminus \mathring{K}_n} =0$ for all $\a \in C^\8(\mathbb{S}^1,M\setminus \mathring{K}_n)$. Hence $\t|_{M\setminus \mathring{K}_n}$ is conservative and we find $f\in C^\8(M\setminus \mathring{K}_n, V)$ with $\t|_{M\setminus \mathring{K}_n} = df$. Because $\supp(\t|_{M\setminus \mathring{K}_{n}}) \subseteq K_{n+1}$ we get $df|_{M\setminus K_{n+1}}=0$. Hence $f$ is constant on each connected component of $M\setminus K_{n+1}$. Because the saturated exhaustive sequence $(K_n)_{n\in \N}$ is strict we can subtract the constant value of $f$ on each connected component and can assume $\supp(f)\subs K_{n+1}$. Now we extend $f$ to a smooth map $\til{f} \co M\ra V$, what is possible because $M\setminus \mathring{K_n}$ is a closed submanifold. Obviously $\supp (\t - d\til{f}) \subs K_n$ and hence $[\t]= [\t-d\til{f}] \in \ol{\go}_{K_n}(M,V) \subs \ol{\go}_{K_{n+1}}(M,V)$.
\end{proof}

\begin{lemma}\label{GruFolgenvoll2}
In the situation of Section \ref{GruChLieEx}, the space $\go_c^1(M,\mathbb{V})/d_{\mathbb{V}}\gg_c(M,\mathbb{V})$ is sequentially complete if $\ol{P}$ admits a strict saturated exhaustive sequence.
\end{lemma}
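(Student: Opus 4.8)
The plan is to push the whole statement onto the finite cover $\ol{P}$ and there remove the $\ol{H}$-action by averaging, so that completeness follows from Lemma \ref{GruFolgenvoll} applied to $\ol{P}$.

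First I would rewrite the space. By Definition \ref{Gruomega} there are topological isomorphisms
\[
\go_c^1(M,\mathbb{V})/d_{\mathbb{V}}\gg_c(M,\mathbb{V}) = \ol{\Omega}^1_c(M,\mathbb{V}) \cong \ol{\Omega}^1_c(\ol{P},V)_{\ol{\r}_V}^\hor,
\]
so it suffices to prove that the right-hand space is (sequentially) complete. Because $\ol{H}$ is discrete, the only connection on $\ol{P}$ is $H\ol{P}=T\ol{P}$; every form is then horizontal and $D_{\ol{\r}_V}=d$, so the right-hand space equals $\Omega^1_c(\ol{P},V)_{\ol{\r}_V}/dC_c^\8(\ol{P},V)_{\ol{\r}_V}$, exactly as in the identifications already used in Lemma \ref{GruAbgeschlossenDrVff}. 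I would also record that $\ol{P}$ is connected (it is a quotient of the connected space $P$) and $\sigma$-compact (it is a finite cover of the $\sigma$-compact manifold $M$ via the proper map $\ol{q}$, cf.\ Lemma \ref{GruBkdlc89}), so that it is an admissible base manifold for Lemma \ref{GruFolgenvoll}.

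Next, invoking the hypothesis that $\ol{P}$ admits a strict saturated exhaustive sequence, I would apply Lemma \ref{GruFolgenvoll} to the manifold $\ol{P}$ to obtain that $\ol{\Omega}^1_c(\ol{P},V):=\Omega^1_c(\ol{P},V)/dC_c^\8(\ol{P},V)$ is complete; here properness of $\ol{q}$ (Lemma \ref{GruBkdlc89}) ensures that the forms in $\Omega^1_c(\ol{P},V)$ are genuinely compactly supported, so the lemma applies verbatim.

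Finally I would isolate the invariant part by averaging, following the argument of Lemma \ref{GruFIX}. With $n:=\#\ol{H}$ and $g.\t:=\ol{\r}_V(g)\ci \ol{R}_g^\ast\t$, the operator $A:=\fr{1}{n}\sum_{g\in\ol{H}}g.(\bl)$ is a continuous projection of $\Omega^1_c(\ol{P},V)$ onto $\Omega^1_c(\ol{P},V)_{\ol{\r}_V}$ (it does not enlarge supports), it commutes with $d$, and it sends $dC_c^\8(\ol{P},V)$ into $dC_c^\8(\ol{P},V)_{\ol{\r}_V}$. Hence $A$ descends to a continuous idempotent $\ol{A}$ on the complete space $\ol{\Omega}^1_c(\ol{P},V)$, and $\im(\ol{A})=\ker(\id-\ol{A})$ is a closed, thus complete, subspace. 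The canonical continuous injection $\ol{\Omega}^1_c(\ol{P},V)_{\ol{\r}_V}\ra\ol{\Omega}^1_c(\ol{P},V)$, $[\t]\ms[\t]$, has image precisely $\im(\ol{A})$, and $[\t]\ms[A\t]$ is a continuous left inverse; so the injection is a topological embedding onto the closed subspace $\im(\ol{A})$. This identifies $\ol{\Omega}^1_c(\ol{P},V)_{\ol{\r}_V}$ with a closed subspace of a complete space, whence it is complete, in particular sequentially complete.

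I expect the final step to be the only real obstacle: upgrading the obvious algebraic identification of the invariant quotient with $\im(\ol{A})$ to a topological one, i.e.\ proving that the continuous bijection onto $\im(\ol{A})$ is open. This is exactly where the support-preserving averaging operator furnishes a continuous left inverse, mirroring the openness part of Lemma \ref{GruFIX}; the accompanying verifications (that $A$ commutes with $d$ and preserves the relevant subspaces) are routine.
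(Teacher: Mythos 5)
Your proposal is correct and follows essentially the same route as the paper: reduce to the quotient bundle $\ol{P}$, apply Lemma \ref{GruFolgenvoll} (using that $\ol{P}$ is connected, $\sigma$-compact and admits a strict saturated exhaustive sequence), and use finite-group averaging to realise the invariant quotient topologically as a closed subspace of the complete space $\Omega_c^1(\ol{P},V)/dC_c^\8(\ol{P},V)$. The only cosmetic difference is that you describe that closed subspace as the image of the continuous idempotent $\ol{A}$, whereas the paper writes it as $\bigcap_{g\in\ol{H}}(\ol{\r}_V(g)\ci\ol{R}_g^\ast-\id)\inv\set{0}$; these coincide, and both arguments use the averaging map as the continuous one-sided inverse that makes the algebraic identification topological.
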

\begin{proof}
It is enough to show that $\go^1_c(\ol{P},V)_{\ol{\r}_V}/dC^\8_c(\ol{P},V)_{\ol{\r}_V}$ is sequentially complete. To this end we show that $\ps \co \go^1_c(\ol{P},V)_{\ol{\r}_V} \ra (\go_c^1(\ol{P},V)/ (dC^\8_c(\ol{P},V)))_\fix, ~ \o \ms [\o]$ is surjective and $\ker(\ps) = dC^\8_c(\ol{P},V)_{\ol{\r}_V}$ where $(\go_c^1(\ol{P},V)/ (dC^\8_c(\ol{P},V)))_\fix$ stands for the fixed points of the natural action of $\ol{H}$ on $\go_c^1(\ol{P},V)/ (dC^\8_c(\ol{P},V))$. The inclusion $dC^\8_c(\ol{P},V)_{\ol{\r}_V} \subs \ker(\ps)$ is clear. Let $n:=\#\ol{H}$, $[\o] \in (\go_c^1(\ol{P},V)/ (dC^\8_c(\ol{P},V)))_\fix$ and $[\o]=[df]$ for $f\in C^\8_c(\ol{P},V)$. Then $[\o] = [d \fr 1n \sum_{g\in \ol{H}} g.f]$. In order to show that $\ps$ is surjective let $\o \in \go^1_c(\ol{P},V)$ such that $[\o]$ is $\r_V$-invariant. Then $[\o] = [\fr 1n \sum_{g\in \ol{H}} g.\o]$ and $\fr 1n \sum_{g\in \ol{H}} g.\o \in \go^1_c(\ol{P},V)_{\ol{\r}_V}$. Hence 
\begin{align*}
\go^1_c(\ol{P},V)_{\ol{\r}_V}/(dC^\8_c(\ol{P},V)_{\ol{\r}_V}) \ra (\go_c^1(\ol{P},V)/ (dC^\8_c(\ol{P},V)))_\fix, ~ [\o] \ms [\o]
\end{align*}
is a continuous vector space isomorphism. It is also an isomorphism of topological vector spaces, because $(\go_c^1(\ol{P},V)/ (dC^\8_c(\ol{P},V)))_\fix \ra \go^1_c(\ol{P},V)_{\ol{\r}_V},~ [\o] \ms [\fr 1n \sum_{g \in \ol{H}} g.\o]$ is a continuous right-inverse. 
Hence $\go^1_c(\ol{P},V)_{\ol{\r}}/dC^\8_c(\ol{P},V)_{\ol{\r}}$ is sequentially complete if and only if 
\begin{align*}
(\go_c^1(\ol{P},V)/ (dC^\8_c(\ol{P},V)))_\fix 
\end{align*}
is sequentially complete.  
Because $\ol{q}$ is proper $\ol{P}$ is $\sigma$-compact.  For the same reason like in \cite[Chapter 2, p. 385]{Neeb:2009} we can assume $\ol{P}$ to be connected. 
Now Lemma \ref{GruFolgenvoll} shows that $\go_c^1(\ol{P},V)/ (dC^\8_c(\ol{P},V))$ is sequentially complete. Hence the assertion follows from
\begin{align*}
&(\go_c^1(\ol{P},V)/ (dC^\8_c(\ol{P},V)))_\fix = \bigcap_{g\in \ol{H}} \set{\o \in \go_c^1(\ol{P},V)/ (dC^\8_c(\ol{P},V)): \ol{\r}_V(g)\ci \ol{R}_g^\ast \o =\o}\\
=&\bigcap_{g\in \ol{H}} (\ol{\r}_V(g)\ci \ol{R}_g^\ast - \id)\inv\set{0}.
\end{align*}
\end{proof}


\end{document}